\newtheoremstyle{Claim}
  {10pt}
  {}
  {}
  {}
  {\itshape}
  {.}
  { }
  {\thmname{#1}\thmnumber{ #2}\textnormal{\thmnote{ (#3)}}}
\newtheorem{theorem}{Theorem}[section]
\newtheorem{lemma}[theorem]{Lemma}
\newtheorem{corollary}[theorem]{Corollary}
\newtheorem{observation}[theorem]{Observation}
\newtheorem*{theorem*}{Theorem}
\theoremstyle{Claim}
\newtheorem{claim}{Claim}[theorem]
\theoremstyle{definition}
\newtheorem{definition}[theorem]{Definition}
\newtheoremstyle{appendixA}
  {\topsep}
  {\topsep}
  {\itshape}
  {0pt}
  {\bfseries}
  {. ---}
  { }
  {\thmname{#1}\thmnumber{ #2}\textnormal{\thmnote{ A.(#3)}}}
\theoremstyle{appendixA}
\newtheoremstyle{appendixA}
  {\topsep}
  {\topsep}
  {\itshape}
  {0pt}
  {\bfseries}
  { }
  {}
  {\thmname{#1} B.\thmnumber{ #2}\,\textnormal{\thmnote{(#3)}}}
\theoremstyle{appendixA}
\newcommand*{\myproofname}{Proof}
\newenvironment{claimproof}[1][\myproofname]{\begin{proof}[#1]}{\end{proof}}
\newcommand{\defeq}{\coloneqq}
\newcommand{\PCF}{\mathsf{pcf}}
\newcommand{\N}{\mathbb{N}}
\newcommand{\defn}[1]{{\emph{#1}}}
\newcommand{\odd}{\chi_{\mathsf{o}}}
\newcommand{\pcf}{\chi_{\mathsf{pcf}}}
\newcommand{\vp}{\varphi}
\newcommand{\dom}{\mathsf{Dom}}
\newcommand{\oc}{\vp_\mathsf{o}}
\newcommand{\worst}{worst}
\newcommand{\Worst}{\mathsf{worst}}
\newcommand{\Good}{\mathsf{good}}
\newcommand{\Even}{\mathsf{even}}
\newcommand{\Odd}{\mathsf{odd}}
\newcommand{\Bad}{\mathsf{bad}}
\newcommand{\Semibad}{\mathsf{s\text{-}bad}}
\newcommand{\rr}{\eta}
\renewcommand{\phi}{\varphi}
\DeclareMathOperator{\Forb}{Forb}
\DeclareMathOperator{\Flex}{Flex}
\DeclareMathOperator{\flex}{flex}
\renewcommand*{\@fnsymbol}[1]{\ensuremath{\ifcase#1\or 1\or 2 \or 3 \or 4 \or 5 \or 6 \or 7 \or 8 \or 9 \or 10 
   \else\@ctrerr\fi}}
\DeclareMathOperator{\mad}{mad}
\DeclareMathOperator{\forb}{forb}
\title[Odd and PCF colorings with girth conditions]{The forb-flex method for odd coloring and proper conflict-free coloring of planar graphs}
\author{James Anderson}
\address[James Anderson]{Georgia Institute of Technology.~\textnormal{james.anderson@math.gatech.edu}}
\author{Herman Chau}
\address[Herman Chau]{University of Washington.~\textnormal{hchau@uw.edu}}
\author{Eun-Kyung Cho}
\address[Eun-Kyung Cho]{Hankuk University of Foreign Studies.~\textnormal{ ekcho2020@gmail.com}}
\author{Nicholas Crawford}
\address[Nicholas Crawford]{University of Colorado Denver.~\textnormal{nicholas.2.crawford@ucdenver.edu}}
\author{Stephen G. Hartke}
\address[Stephen G. Hartke]{University of Colorado Denver.~\textnormal{stephen.hartke@ucdenver.edu}}
\author{Emily Heath}
\address[Emily Heath]{Iowa State University.~\textnormal{ eheath@iastate.edu}}
\author{Owen Henderschedt}
\address[Owen Henderschedt]{Auburn University.~\textnormal{ olh0011@auburn.edu}}
\author{Hyemin Kwon}
\address[Hyemin Kwon]{Ajou University.~\textnormal{ khmin1121@ajou.ac.kr}}
\author{Zhiyuan Zhang}
\address[Zhiyuan Zhang]{Toronto Metropolitan University.~\textnormal{ zhiyuan.zhang@torontomu.ca}}
\titleformat{\section}[block]{\large\scshape\center}{\thesection.}{1ex}{}
\titleformat{\subsection}[block]{\bfseries}{\thesubsection.}{1ex}{}
\titleformat{\subsubsection}[runin]{\itshape}{}{0em}{}[]
\titlespacing*{\section}{0pt}{*3}{*1}
\titlespacing*{\subsection}{0pt}{*3}{*1}
\titlespacing*{\subsubsection}{0pt}{*1.5}{*0}
\begin{document}
\maketitle

\begin{abstract}
We introduce a new tool useful for greedy coloring, which we call the forb-flex method, and apply it to odd coloring and proper conflict-free coloring of planar graphs.
The odd chromatic number, denoted $\chi_{\mathsf{o}}(G)$, is the smallest number of colors needed to properly color $G$ such that every non-isolated vertex of $G$ has a color appearing an odd number of times in its neighborhood. The proper conflict-free chromatic number, denoted $\chi_{\mathsf{PCF}}(G)$, is the smallest number of colors needed to properly color $G$ such that every non-isolated vertex of $G$ has a color appearing uniquely in its neighborhood.
 Our new tool works by carefully counting the structures in the neighborhood of a vertex and determining if a neighbor of a vertex can be recolored at the end of a greedy coloring process to avoid conflicts. Combining this with the discharging method allows us to prove $\chi_{\mathsf{PCF}}(G) \leq 4$ for planar graphs of girth at least 11, and $\chi_{\mathsf{o}}(G) \leq 4$ for planar graphs of girth at least 10. These results improve upon the recent works of Cho, Choi, Kwon, and Park.
\end{abstract}

\section{Introduction}
Introduced recently by Petru\v{s}evski and \v{S}krekovski~\cite{petrusevski2021colorings}, an \emph{odd coloring} of a graph $G$ is a proper coloring of $G$ such that every non-isolated vertex has a color which appears an odd number of times in its neighborhood. 
The \emph{odd chromatic number} of $G$, denoted by $\chi_{\mathsf{o}}(G)$, is the minimum number of colors needed for an odd coloring of $G$. In their paper introducing odd coloring, Petru\v{s}evski and \v{S}krekovski~\cite{petrusevski2021colorings} proved $\odd(G)\leq 9$ for any planar graph $G$, and conjectured $\odd(G)\leq 5$ (such a bound would be tight, as witnessed by $\odd(C_5)=5$). Since then, odd coloring has received much attention (see for instance \cite{ahn2022proper, caro2022remarks,cranston2022odd,cranston2023note,liu20231,petr2022odd,qi2022odd,wang2022odd}). 
For example, Cranston~\cite{cranston2022odd} considered odd colorings of graphs which are sufficiently sparse, as measured by their \emph{maximum average degree}:
\[\mad(G) \defeq \max \left\{\frac{2|E(H)|}{|V(H)|} : H \subseteq G, \, H \neq \emptyset\right\}.\]
He conjectured that every graph $G$ with $\mad(G)\leq \frac{4c-4}{c+1}$ has an odd $c$-coloring for $c\geq 4$ and confirmed this for $c\in\{5,6\}$. Cho, Choi, Kwon, and Park~\cite{cho2023odd} resolved this conjecture, showing that it holds for $c\geq 7$ but is false for infinitely many graphs for $c=4$. In addition, they proved that any graph $G$ with $\mad(G)<\frac{22}{9}$ and no induced $5$-cycle has $\odd(G) \le 4$, thereby showing if $G$ is a planar graph with girth at least $11$, then $\odd(G) \le 4$.

We improve upon this result by proving the following: 

\begin{theorem}\label{thm:odd10}
    If $G$ is a planar graph with girth at least $10$, then $\odd(G) \le 4$.
\end{theorem}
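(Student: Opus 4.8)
The plan is to use the \emph{discharging method} combined with the \emph{forb-flex method} advertised in the abstract. First I would set up a minimal counterexample: suppose $G$ is a planar graph with girth at least $10$ that has no odd $4$-coloring, chosen to minimize $|V(G)| + |E(G)|$. Standard reductions show $G$ is connected, has minimum degree at least $2$ (an isolated vertex is irrelevant to odd coloring, and a degree-$1$ vertex can be deleted and its neighbor reused freely since a leaf imposes no constraint other than properness), and more refined reducibility arguments will rule out various small configurations --- for instance short paths of $2$-vertices near a low-degree vertex, or a $3^-$-vertex adjacent to another $3^-$-vertex, etc. The key novelty is that the forb-flex method lets us certify reducibility of configurations that are \emph{not} reducible by naive greedy arguments: after extending a coloring of $G - v$ (or $G$ minus a small set) greedily, even if every color at $v$ currently creates a monochromatic-parity conflict in some neighborhood, we may be able to \emph{recolor one neighbor} $u$ of $v$ (a ``flex'' move) to fix the parity, provided $u$'s own constraints leave enough freedom; counting the ``forbidden'' colors at $u$ versus the structures it must repair gives a clean sufficient condition.

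Next I would run the discharging argument on the girth-$10$ planar graph. Assign each vertex $x$ charge $d(x) - 4$ and each face $f$ charge $\tfrac{g}{g-2}\bigl(\tfrac{1}{2}d(f) - 1\bigr)\cdot\tfrac{2(g-2)}{g}$ --- concretely, using Euler's formula $\sum_{x}(d(x)-4) + \sum_{f}(\tfrac{1}{2}d(f)-4) = -8$ after scaling, or the girth-adapted version $\sum_x (d(x) - \tfrac{2g}{g-2}) + \sum_f (d(f) - \tfrac{2g}{g-2}) < 0$ so that with girth $\ge 10$ every face has nonnegative charge and all deficit sits on vertices of degree $2$ and $3$. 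The discharging rules will send charge from large-degree vertices and from faces toward $2$-vertices and $3$-vertices along the structure; the point is that a girth-$10$ planar graph has ``long'' faces (length $\ge 10$), which carry a large surplus, and this surplus can be distributed to nearby low-degree vertices. After discharging, every vertex and face should have nonnegative charge, contradicting the negative total, \emph{provided} all the low-charge local configurations that could remain have been shown reducible in the previous step.

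The bridge between the two halves is the crux: I must produce a list of \emph{unavoidable} configurations (via discharging) that is contained in the list of \emph{reducible} configurations (via forb-flex). Concretely, for each potentially-bad configuration --- a $2$-vertex whose two neighbors are ``poor'', a $3$-vertex with two $2$-neighbors, a short thread between two low-degree vertices, etc. --- I would (i) delete a carefully chosen vertex $v$ together with any pendant $2$-vertices, (ii) invoke minimality to $4$-color what remains, (iii) count, in the neighborhood $N(v)$, how many colors are blocked for $v$ by properness and how many by the odd-parity requirement at each neighbor, and (iv) if no color works directly, identify a neighbor $u$ that can be recolored to repair all parities simultaneously --- this is exactly where the forb-flex bound $\forb + \flex$ is compared against the available palette size $4$. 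I expect \textbf{the main obstacle} to be making the forb-flex accounting tight enough: girth $10$ is one less than the previously known girth $11$, so the local neighborhoods are slightly denser and the ``flex'' budget is correspondingly tighter, meaning the discharging rules must be tuned so that the only surviving configurations are precisely those where a single recoloring move provably suffices. Balancing the discharging weights against the forb-flex inequality, and handling the parity (odd) constraint rather than the simpler uniqueness (PCF) constraint, will require the most care.
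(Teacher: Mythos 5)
Your proposal correctly identifies the paper's overall strategy --- take a minimal counterexample, establish reducible configurations via the forb-flex method, and close with discharging on a girth-$10$ planar graph. However, what you have written is a research plan rather than a proof, and essentially all of the content that makes the argument go through is absent. Let me point out the main gaps.

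First, the forb-flex criterion you describe is not quite the right one. You propose comparing ``$\forb + \flex$ against the available palette size $4$,'' but the paper's reducibility criterion is $\flex(u) > \forb(u)$: for a vertex $u$, $\flex(u)$ is the largest $k \in \{1,2,3\}$ such that $u$ anchors a $k$-thread, and $\forb(u) = \sum_{v \in N(u)} s_u(v)$ is a weighted degree count, and the key point is that $|\Flex(u,\phi)| \geq \flex(u)$ always, while $|\Forb(u,\phi)| \leq \forb(u)$, so $\flex(u) > \forb(u)$ guarantees a non-forbidden flexible color regardless of $\phi$. Moreover, the substantive work is not a single application of this inequality but a collection of inequalities on \emph{sums} of $\forb$-values along paths of $4^+$-vertices (the paper's Lemmas 4.13--4.15), proved by iterating the forb-flex extension lemma over nested subgraphs $G - S[u_1] - S[u_2] - \cdots$; these are what make the discharging constants work out, and nothing in your sketch anticipates them.

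Second, your discharging setup leaves essentially everything to be filled in. You mention two different charge functions ($d(x) - 4$ and the girth-adapted $d(x) - 2g/(g-2)$), neither of which is the one the paper uses ($2\deg(x) - 6$ for vertices, $\ell(f) - 6$ for faces, summing to $-12$; chosen so that $3$-vertices have charge exactly $0$ and play a purely passive role). More importantly, the actual discharging is intricate: the paper needs five vertex rules (sending charge $\tfrac{5}{6}, \tfrac{2}{3}, \tfrac{1}{3}, \tfrac{1}{5}, \tfrac{1}{10}$ to various thread-neighbors), five face rules, and a crucial secondary rule (R1) in which $4^+$-vertices redistribute leftover charge to incident ``poor'' faces --- a class of $10$-faces identified by their \emph{array representation} --- and the analysis of poor faces occupies an entire subsection. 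The array machinery (the six subsequence types $a_4, a_3, a_2^{\mathsf{worst}}, a_2^{\mathsf{bad}}, a_2^{\mathsf{good}}, a_1$), the good/bad/worst classification of $2$- and $3$-vertices, and the poor/rich face dichotomy are all essential bookkeeping devices that convert unavoidability into finitely many checkable cases; your proposal does not hint at them, and without them the claim ``the only surviving configurations are precisely those where a single recoloring move provably suffices'' is just a statement of the goal, not an argument. In short, the approach is right, but the proof is not there.
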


In addition, we prove a stronger result for PCF (proper conflict-free) coloring, from which odd coloring arose as a generalization. A \emph{PCF coloring} of a graph is a proper coloring such that the neighborhood of each non-isolated vertex contains a color that appears exactly once. 
The PCF chromatic number of $G$, denoted $\chi_{\PCF}(G)$, is the smallest number of colors used in a PCF coloring of $G$. Recently introduced by Fabrici, Lu\v{z}ar, Rindo\v{s}ov\'a, and Sot\'ak \cite{fabrici2022proper}, PCF coloring has already garnered much attention. For instance, see \cite{ahn2022proper, caro2023remarks, cho2023brooks, cho2022proper, 
cho2022relaxation, liu2024proper, dai2023new, kamyczura2024conflict, liu2024asymptotically}. In particular, as PCF coloring lies between a proper coloring of $G$ and its square, $\chi_{\PCF}(G)$ provides lower bounds on $\chi(G^2)$, a quantity of great interest itself (see, for instance, the discussion in \cite{liu2024proper}). 

In the same paper where they introduced PCF coloring, Fabrici, Lu\v{z}ar, Rindo\v{s}ov\'a, and Sot\'ak \cite{fabrici2022proper} showed $\chi_{\PCF}(G) \leq 8$ and conjectured $\chi_{\PCF}(G) \leq 6$ for any planar graph $G$. Caro, Petru\v{s}evski and \v{S}krekovski~\cite{caro2023remarks} made progress on this conjecture, showing $\chi_{\PCF}(G) \leq 6$ for planar graphs with girth at least 7. Specifically, they provided sufficient bounds on $\mad(G)$ which guarantees $\chi_{\PCF}(G) \leq c$ for $c \in \{4, 5, 6\}$. These bounds were recently improved by Cho, Choi, Kwon, and Park~\cite{cho2022proper}, who showed, among other results, that if $G$ has $\mad(G)<\frac{12}{5}$ and no induced $5$-cycle, then $\pcf(G) \le 4$. Since a planar graph $G$ with girth at least $g$ satisfies $\mad(G) <\frac{2g}{g-2}$, their result implies that any planar graph $G$  with girth at least 12 has $\pcf(G)\leq 4$.

In this paper, we improve upon the above result by showing the following:

\begin{theorem}\label{thm:pcf11}
    If $G$ is a planar graph with girth at least $11$, then $\pcf(G) \le 4$.
\end{theorem}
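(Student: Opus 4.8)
The plan is to prove Theorem~\ref{thm:pcf11} by a minimal-counterexample argument. Let $G$ be a planar graph of girth at least $11$ with $\pcf(G)\ge 5$ and $|V(G)|$ as small as possible, so that every proper subgraph of $G$ admits a PCF $4$-coloring. We may assume $G$ is connected; a short analysis at cut vertices (or passing to a block) lets us further assume $G$ is $2$-connected, so that in a fixed plane embedding every face is bounded by a cycle, necessarily of length at least $11$.

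The engine of the proof is a list of \emph{reducible configurations} that $G$ cannot contain. The template is always the same: delete from $G$ a carefully chosen vertex $v$ (sometimes a small set containing $v$ together with an incident thread), take the PCF $4$-coloring of the smaller graph guaranteed by minimality, and then extend it. Extending to $v$ faces two obstacles at once — $v$ must receive a color that is proper and that appears exactly once among the already-colored neighbors of $v$, and that color must not destroy the conflict-free witness of any neighbor $u$ of $v$ whose unique color in $N_{G-v}(u)$ it happens to duplicate. The easy reductions (for instance $\delta(G)\ge 2$, bounds on the lengths of threads, bounds on how many threads hang off a single branch vertex, and a ban on neighborhoods that are "too light") dispose of the cases where a direct color for $v$ always exists.

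The heart of the argument, and where I expect the real work to lie, is the forb-flex analysis used to push reducibility well past what $\mad$ bounds alone would give. For a candidate deletion vertex $v$ one tracks two quantities localized in the neighborhood of $v$: the set $\Forb(v)$ of colors unavailable for $v$ — the colors currently on $N(v)$ together with the witness colors of those neighbors whose conflict-free condition would otherwise break — and the set $\Flex(v)$ of neighbors that are themselves recolorable, typically low-degree vertices with slack in their own constraints, so that recoloring one of them either shrinks $\Forb(v)$ or repairs a broken witness. The configuration is reducible once a counting inequality shows that, after spending the available flexibility, fewer than $4$ colors remain genuinely forbidden for $v$ (and a valid conflict-free choice for $v$ survives). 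Verifying this for each configuration is exactly what forces the girth hypothesis: girth at least $11$ makes the relevant second neighborhood of $v$ tree-like — distinct neighbors of $v$ share no vertex but $v$, threads cannot close up into short cycles, and a recoloring carried out at one neighbor cannot propagate around a short cycle to interfere elsewhere — so the forb/flex bookkeeping stays local and additive. I expect that pinning down precisely which configurations are forb-flex reducible, and checking the counting for each, will be the main obstacle; presumably girth $11$ is the exact threshold at which these inequalities close, which is why it beats the girth-$12$ bound obtainable from $\mad(G)<\frac{12}{5}$ alone.

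Finally, with $G$ shown to avoid every configuration on the list, I would close with a discharging argument on the plane graph. Assign each vertex $x$ initial charge $\deg(x)-4$ and each face $f$ charge $\ell(f)-4$; by Euler's formula the total is $-8$. Every face has charge at least $7$, and only $2$- and $3$-vertices have negative charge, so it suffices to design rules sending charge from faces (and from high-degree branch vertices) to the light vertices along threads, and to verify, using the structural restrictions established in the reducibility step — caps on thread lengths, on the number of light threads at a branch vertex, and on adjacencies among light vertices — that every vertex and face ends with nonnegative charge. This contradicts the total charge being $-8$ and proves the theorem. (Alternatively the discharging can be run against $\mad(G)<\frac{22}{9}$, which girth at least $11$ supplies, but exploiting the planar face structure directly appears to provide the slack needed for the sharper girth bound.)
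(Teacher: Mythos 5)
Your overall skeleton matches the paper's: minimal counterexample, reducible configurations, a flex/forb analysis in the second neighborhood, then discharging on a plane drawing. But there are two concrete problems with the proposal as written.

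First, the reduction to $2$-connectedness is not justified, and the paper deliberately avoids it. You assert that ``a short analysis at cut vertices (or passing to a block) lets us further assume $G$ is $2$-connected, so that every face is bounded by a cycle.'' For PCF coloring this is not a routine step: a cut vertex $v$ of degree $\ge 3$ may lie in several blocks, and gluing PCF $4$-colorings of the blocks must arrange not only the color of $v$ but also a color appearing \emph{exactly once} in the union $N_G(v)$, which is distributed across blocks. The only cut-vertex reduction the paper carries out is for $2$-vertex cut vertices, and it relies on $v$ having exactly two neighbors in distinct components so that one can permute colors independently. Consequently the paper works throughout with face \emph{boundary walks} (explicitly flagged as possibly non-cycles) and with ``array representations'' of those walks; if your discharging secretly assumes each face boundary is a cycle, the verification for vertices and faces can silently fail.

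Second, the crucial bridge that lets odd-coloring forb-flex machinery be reused for PCF is missing from your plan. The paper isolates this as a short lemma: for a vertex of degree $2$ or $4$, having a color of odd multiplicity in its neighborhood is equivalent to having a color of multiplicity exactly one. Because the vertices that appear inside $S[u]$ in the forb-flex reductions are all $2$- or $4$-vertices, an odd $4$-coloring produced by the flex-repair step is automatically conflict-free on that set, which is what makes the single PCF reducibility observation (a $4$-vertex with three $t_2$-neighbors and one $t_1$-neighbor is impossible) go through. Without this equivalence, ``recoloring a flexible neighbor to restore an odd witness'' does not by itself restore a \emph{unique} witness, and the counting you sketch ($|\Flex|>|\Forb|$) proves the wrong statement. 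In fact the PCF proof leans on forb-flex far less than you anticipate; most of its reducible configurations are imported wholesale from Cho--Choi--Kwon--Park, and the new content is this one observation plus a discharging scheme built around a ``supported''/``unsupported'' dichotomy for $2$-vertices on $2$-threads. Finally, your charge normalization $\deg(x)-4$ and $\ell(f)-4$ (total $-8$) is a legitimate alternative to the paper's $2\deg(x)-6$ and $\ell(f)-6$ (total $-12$), but note the latter zeroes out $3$-vertices, which simplifies the bookkeeping; with your convention you would need additional rules to rescue $3$-vertices.
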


We note that our proofs do not follow from results about sparse graphs; only the girth conditions on $G$ are used.
Our proofs use the method of discharging. Generally speaking, to prove a theorem using discharging, we start with a counterexample $G$ and assign a real number to the vertices and faces  such that the sum of these charges is negative; then, after applying a series of \textit{discharging rules}---that is, rules which reassign charge yet preserve the total charge --- we show the total charge is positive, thereby reaching a contradiction. To assist in analyzing the discharging, we show in \S \ref{sec: reducible configs} that the minimum counterexamples do not contain various structures, which we call \textit{reducible configurations}.

In \S \ref{sec: forb/flex}, we introduce the key technique used in this paper, a tool that aids in greedy-coloring, which we call the \textit{forb-flex} method. 

 Roughly speaking, this method works as follows: letting $u \in V(G)$, where $G$ is a minimum counterexample to, say, Theorem \ref{thm:odd10}, we assign a \textit{score} to each neighbor $v$ of $u$, and let $\forb(u)$ be the sum of these scores. The score of each neighbor $v$ of $u$ is based on the number of colors that we would forbid from coloring $u$ if we wanted to greedily color $v$ followed by $u$. We simultaneously investigate colors for $u$ that allow a neighbor of $u$ to receive two different colors --- we call these flexible colors. The idea is that if $u$ has at least one flexible color that's not forbidden for $u$, then we can greedily color vertices around $u$ in a way such that one of $u$'s neighbors can be recolored if needed. This extra flexibility allows us to greedily color the neighbors of $u$ without us worrying about $u$ having an odd color --- we can simply utilize the flexibility and change the color on the corresponding neighbor of $u$ to give $u$ an odd color if needed.

Analyzing the $\forb$ of vertices allows us to consider many structural cases at once, thereby reducing casework significantly. In particular, we show configurations that contain a vertex with low $\forb$ are reducible, while configurations with vertices with high $\forb$ retain sufficient charge for the discharging argument to go through. 
We note that while this tool is used closely in conjunction with our discharging rules, its use is not necessarily limited to discharging arguments; indeed, \S \ref{sec: forb/flex} is free of discharging, and one need not have any familiarity with discharging to understand the results in \S \ref{sec: forb/flex}.

We now outline the rest of the paper.
In \S \ref{sec: prelims}, we introduce definitions and notation and prove a few basic lemmas. We then focus on \cref{thm:odd10}. In \S \ref{sec: reducible configs}, we show reducible configurations that cannot occur in a minimum counterexample to \cref{thm:odd10}. In \S \ref{sec: forb/flex}, we introduce the forb-flex method and prove a variety of useful lemmas that show various vertices have high $\forb$. We take advantage of this in section \S \ref{sec:odd10}, where we prove Theorem \ref{thm:odd10} by using discharging. And finally, in \S \ref{sec:pcf11}, we prove Theorem \ref{thm:pcf11} by using similar techniques.

\section{Preliminaries}
\label{sec: prelims}
We follow standard graph theory terminology. For any undefined terms, see \cite{west2001introduction}.
For a (partial) proper coloring $\vp$ of a graph $G$ and a vertex $v$, let $\phi_\mathsf{o}(v)$ denote a color that appears an odd number of times in $N(v)$, if it exists. Also, let $\vp_*(v)$ denote a color that uniquely appears in $N(v)$, if it exist. We say $\phi_\mathsf{o}(v)$ is an \textit{odd color} of $v$, and $\phi_*(v)$ is a \textit{unique color} of $v$. We use $\dom(\vp)$ to denote the domain of $\vp$. Note, then, $\phi$ is an odd coloring if and only if $\phi_\mathsf{o}(v)$ is defined for all $v \in \dom(\phi)$, and similarly $\phi$ is a PCF coloring if and only if $\phi_*(v)$ is defined for all $v \in \dom(\phi)$.

Given $n \in \N$, we let $[n] \defeq \{1, 2, \ldots, n\}$. For $k \in \N$, we say $v$ is a \emph{$k$-vertex}, a \emph{$k^+$-vertex}, or a \emph{$k^-$-vertex} if $\deg(v) = k$, $\deg(v) \ge k$, or $\deg(v) \le k$, respectively. We call $v$ an {\em odd-vertex} if $v$ has an odd degree, and an {\em even-vertex} if $v$ has an even degree. If $v\in N(u)$ we say $v$ is a $k$-neighbor, a $k^+$-neighbor, a $k^-$-neighbor, an even-neighbor, or an odd-neighbor of $u$, respectively.

By a sequence $j_1^+ - j_2^+ - \cdots - j_k^+$, where $k \in \N$ and $j_1, \ldots, j_k \in \N$, we mean an arbitrary sequence of integers $i_1 - i_2 - \cdots - i_k$ such that $j_n \leq i_n$ for each $1 \leq n \leq k$. For instance, $3^+ - 2 - 2 - 4^+$ represents any sequence of the form $i - 2 - 2 - j$ with $i \geq 3$, $j \geq 4$.

Choose a plane graph $G$, and let $F(G)$ be the face set of $G$.
For a face $f \in F(G)$, let $\ell(f)$ be the number of edges incident with $f$.
For $k \ge 3$, we say $f$ is a \emph{$k$-face}, a \emph{$k^+$-face}, or a \emph{$k^-$-face} if $\ell(f)=k$, $\ell(f) \ge k$, or $\ell(f) \le k$, respectively. 

For $k \geq 1$, a \emph{$k$-path} is a path $v_1\cdots v_k$ with $k$ vertices (i.e., it has length $k-1$). 
Of particular importance in our arguments are paths whose vertices are all of degree 2. To this end, we define the following:

\begin{definition} Let $k \in \N$ and $P$ be a $k$-path $v_1\cdots v_k$ such that $\deg(v_i) = 2$ for $1 \leq i \leq k$. Let $u$ and $w$ be the neighbors of $v_1$ and $v_k$ not in $P$. If $u$, $w$ are $3^+$-vertices, then we say $P$ is a \emph{$k$-thread}, and we call $u,w$ \emph{anchors} of $P$. We call $v_1$ and $v_k$ \emph{endpoints} of $P$. Given an $n$-thread $P$, we will say $P$ is a \emph{$k^+$-thread} (respectively $k^-$-thread) if $n \geq k$ (resp. $n \leq k$).
\end{definition}

In our analysis, 2-vertices and 3-vertices have a particularly important role, as they are the only vertices that don't receive a positive initial charge. As such, we introduce additional terminology to refer to certain 2-vertices and 3-vertices based on the degrees of their respective neighbors. 

\begin{definition}\label{def: good, bad, worst}
Let $G$ be a graph and $v \in V(G)$ be a $2$-vertex on a $1$-thread (Recall $v$ has no $2^-$-neighbor). Then $v$ is \begin{itemize}
    \item \defn{good} if $v$ has no $3$-neighbor,
    \item \defn{bad} if $v$ has exactly one $3$-neighbor,
    \item \defn{\worst} if $v$ has two $3$-neighbors.
\end{itemize}
\end{definition}

\begin{definition}
Let $G$ be a graph and $v \in V(G)$ be a 3-vertex with no $1$-neighbor. Then $v$ is \begin{itemize}
    \item \defn{good} if $v$ has no $2$-neighbor,
    \item \defn{semi-bad} (\defn{s-bad}, for short) if $v$ has exactly one $2$-neighbor and no $3$-neighbor,
    \item \defn{bad} if $v$ has exactly one $2$-neighbor and exactly one $3$-neighbor,
    \item \defn{\worst} if at least two $2$-neighbors.
\end{itemize}
\end{definition}

For a face $f$ of a connected plane graph $G$, its \textit{boundary} is the closed walk containing all vertices bordering $f$ (note that the boundary of a face may not be a cycle, hence the use of the closed walk). We also note that as the walk is closed, the degree sequence of the boundary has an arbitrary starting point.

It will be convenient to describe the boundary degree sequence of a face of a minimum counterexample to \cref{thm:odd10} and \cref{thm:pcf11} as the concatenation of various subsequences. To this end, we tactically define the following subsequences, which we will call \textit{arrays}. 
\begin{definition}\label{def:array}
    
    We define six subsequences, which we call \textit{arrays}, as follows:
    \begin{enumerate}[label = {(\arabic*)}, itemsep = 5pt]
        \item $a_4$ is the subsequence $4^+-2-2-2$ of sequence $4^+ - 2- 2- 2-4^+$,
        \item $a_3$ is the subsequence $4^+ - 2-2$ of sequence $4^+ - 2- 2-4^+$,
        \item $a_2^{\Worst}$ is the subsequence $3-2$ of sequence $3-2-3$,
        \item $a_2^{\Bad}$ is the subsequence $3-2$ of sequence $3-2-4^+$, or the subsequence $4^+-2$ of the sequence $4^+-2-3$,
        \item $a_2^{\Good}$ is the subsequence $4^+-2$ of the sequence $4^+-2-4^+$,
        \item $a_1$ is the subsequence $3^+$ of the sequence $3^+-3^+$.
    \end{enumerate}
   
By $a_2$, we mean one of $a_2^{\Worst}$, $a_2^{\Bad}$, or $a_2^{\Good}$. A closed walk $w$ has an \textit{array representation} if the degree sequence of $w$ can be written using a concatenation of arrays. Given a face $f$ of a connected plane graph, by an \textit{array representation of $f$} we mean an array representation of its boundary.
\end{definition}

 For example, if 
$4^+ - 2 - 2 -2 - 4 - 2 - 3 - 4^+ - 2- 2$
is the degree sequence of a closed walk (with the degree of the start/end point written only once), then it has an array representation
$a_4a_2^{\Worst}a_1a_3$.
Note that, as a closed walk can have an arbitrary starting point and orientation, it may have more than one array representation.

We will verify in \cref{cor: all faces have array representations} of \S\ref{sec: reducible configs} that each face of a minimum counterexample to Theorem \ref{thm:odd10} has an array representation. Analyzing the array representations will allow us to quickly gain insight into the charge of a face. For instance, after an initial application of discharging rules, particular faces will have negative and nonnegative charges --- we call these \textit{poor} and \textit{rich} faces and define them below in terms of their array representations. 

\begin{definition}\label{def:poor}
    A face of a connected plane graph is \defn{poor} if its boundary can be represented by a permutation of one of the following:
    \begin{enumerate}
        \item $a_4, a_4, a_1, a_1$
        \item $a_4, a_3, a_2^{\Good}, a_1$
    \end{enumerate}
    It is \textit{rich} otherwise.
\end{definition}

The following lemma will be used in \S\ref{sec:odd10} for the proof of \cref{thm:odd10}. 

\begin{lemma}\label{lemma:three-and-two-choose-odd}
Let $n \ge 2$ and $X$ be a multiset of elements in $[n]$. 
If we change one element $x$ of $X$ into an element in $[n] \setminus \{x\}$, and obtain $X'$, then either $X$ or $X'$ has an element of odd multiplicity.    
\end{lemma}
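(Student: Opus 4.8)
The plan is a short parity argument by contraposition. Suppose neither $X$ nor $X'$ has an element of odd multiplicity; I will derive a contradiction. Write $m_X$ and $m_{X'}$ for the multiplicity functions of $X$ and $X'$. Since we change one occurrence of $x$ into some $y \in [n] \setminus \{x\}$, the only multiplicities that differ between $X$ and $X'$ are those of $x$ and $y$: we have $m_{X'}(x) = m_X(x) - 1$, $m_{X'}(y) = m_X(y) + 1$, and $m_{X'}(z) = m_X(z)$ for all other $z \in [n]$.

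Now focus on the element $x$. Because $x$ is genuinely an element of $X$, we have $m_X(x) \ge 1$, and by the assumption that $X$ has no element of odd multiplicity, $m_X(x)$ is even; hence $m_X(x) \ge 2$. But then $m_{X'}(x) = m_X(x) - 1 \ge 1$ is odd, so $x$ is an element of $X'$ of odd multiplicity, contradicting the assumption on $X'$. This contradiction proves the lemma.

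Equivalently, stated directly: if $X$ already has an element of odd multiplicity we are done; otherwise every multiplicity in $X$ is even, and the computation above shows $m_{X'}(x)$ is odd, so $X'$ has an element of odd multiplicity. I do not anticipate any real obstacle here — the only point requiring a moment's care is noting that $x$ occurs in $X$ so its (even) multiplicity is at least $2$, which is what makes $m_{X'}(x) = m_X(x) - 1$ both positive and odd.
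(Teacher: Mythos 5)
Your proof is correct and follows essentially the same route as the paper's: assume $X$ has no element of odd multiplicity, observe that $m_X(x)$ is then even, and conclude that $m_{X'}(x) = m_X(x) - 1$ is odd. The only cosmetic difference is your contrapositive framing and the explicit note that $m_X(x) \ge 2$, which the paper leaves implicit.
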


\begin{proof}
Suppose $X$ does not have an element of odd multiplicity.
This means that every element of $X$ appears an even number of times in $X$.
If we change one element $x \in X$ into an element in $[n] \setminus \{x\}$ and obtain $X'$, then $x$ appears an odd number of times in $X'$, satisfying the statement of \cref{lemma:three-and-two-choose-odd}. 
\end{proof}

The following lemma will be used in \S\ref{sec:pcf11} for the proof of \cref{thm:pcf11} and will allow us to utilize the tools developed in \S\ref{sec: forb/flex} for PCF colorings.

\begin{lemma}\label{lem:odd-and-pcf}
    Let $G$ be a graph and $\phi$ be a proper coloring of $G$. Let $v \in V(G)$ be a $2$-vertex or $4$-vertex. Then $v$ has an odd color if and only if $v$ has a unique color.
\end{lemma}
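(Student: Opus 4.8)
The plan is to reduce the statement to a short combinatorial fact about multisets of colors in the neighborhood, and to isolate exactly where the hypothesis $\deg(v)\in\{2,4\}$ is needed. First observe that one implication is trivial and in fact holds for every vertex: if a color appears uniquely in $N(v)$, then it appears exactly once, and $1$ is odd, so that color is also an odd color of $v$. So the only content is the converse direction, where we must produce a unique color from an odd color, and this is precisely where the degree restriction matters.

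For the converse, set $n=\deg(v)=|N(v)|\in\{2,4\}$ and let $M$ be the multiset $\{\phi(w): w\in N(v)\}$, which has exactly $n$ elements. Suppose $v$ has an odd color $c$, i.e.\ $c$ has odd multiplicity $m_c$ in $M$. Since the multiplicities of all colors in $M$ sum to $n$, which is even, the number of colors with odd multiplicity in $M$ is even; this number is at least $1$ (witnessed by $c$), hence at least $2$, so there are two \emph{distinct} colors $c_1,c_2$ with $m_{c_1},m_{c_2}$ positive and odd. Then $m_{c_1}+m_{c_2}\le n\le 4$, and two positive odd integers whose sum is at most $4$ must include a $1$; thus some color has multiplicity exactly $1$ in $M$, i.e.\ $v$ has a unique color. (Alternatively one can simply enumerate the multiplicity patterns: for $n=2$ they are $2$ and $1{+}1$, and for $n=4$ they are $4$, $3{+}1$, $2{+}2$, $2{+}1{+}1$, $1{+}1{+}1{+}1$; in each pattern an odd multiplicity occurs if and only if a multiplicity equal to $1$ occurs.)

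I do not expect a genuine obstacle; the one point worth stating carefully in the write-up is which features of the hypothesis are actually used. The parity of $n$ is what forces the number of odd-multiplicity colors to be even (so that a single odd color guarantees a second one), and the bound $n\le 4$ is what rules out the pattern $3{+}3$ — this is exactly why the analogous statement fails for $6$-vertices (e.g.\ a neighborhood coloured $1,1,1,2,2,2$). The properness of $\phi$ is not needed beyond being part of the ambient setup.
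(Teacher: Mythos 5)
Your proof is correct and reaches the same conclusion by essentially the same elementary multiplicity analysis; the paper argues a bit more directly (the odd color has multiplicity $1$ or $3$ in $N(v)$; if $3$, the fourth neighbor of a $4$-vertex supplies a unique color), whereas you route through the parity fact that the number of odd-multiplicity colors is even and then invoke the bound $n\le 4$. Your remark on the $3{+}3$ pattern for $6$-vertices is a nice explanation of why the degree hypothesis is sharp, though not needed for the lemma itself.
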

\begin{proof}
 Suppose $v$ has an odd color under $\phi$, say $\alpha$. Then as $v$ is a $2$-vertex or a $4$-vertex, it follows $|\{ u \in N(v) : \phi(u) = \alpha\}|$ is either 1 or 3. In the former case, $\alpha$ is a unique color of $v$. In the later case, $v$ is a $4$-vertex, and thus there exists $w \in N(v)$ such that $\phi(w) \neq \alpha$. Then $\phi(w)$ is a unique color of $v$. The other direction is trivial.
\end{proof}

\section{Reducible configurations for Theorem \ref{thm:odd10}}\label{sec: reducible configs}
In this section we prove that a minimum counterexample to \cref{thm:odd10} cannot contain various structures, which we call
 \textit{reducible configurations}. Throughout this section, we use terminology from \S\ref{sec: prelims}, and we suppose that \cref{thm:odd10} is false and let $G$ be a counterexample to \cref{thm:odd10} such that $|V(G)|$ is minimal. Clearly, $G$ is connected, as otherwise, there is some connected component of $G$ that is a smaller counterexample.
 
 Many of our proofs will follow the framework of deleting certain vertices from $G$, taking an odd coloring of the resulting graph, and then extending the coloring to all $G$, thereby reaching a contradiction. For statements concerning faces, it will often be useful to write the structures in terms of arrays.

The observation below follows from claims 5.1, 5.2, and 5.3 in \cite{cho2023odd}. While these claims are for a graph $G$ that is a minimal counterexample to the statement that all planar graphs with girth at least 11 have $\chi_{\mathsf{o}}(G) \leq 4$, the proof of these claims do not use the girth condition. Thus the statements are true for a minimal counterexample $G$ to Theorem \ref{thm:odd10} as well.

\begin{observation}\label{obs:odd}
The following hold for $G$: 
\begin{enumerate}[label = {(\arabic*)}]
\item\label[observation]{obs:odd - no degree 1 vertex} $G$ has no $1$-vertex.
\item\label[observation]{obs:odd - no 4+ thread} $G$ has no $4^+$-thread.
\item\label[observation]{obs:odd - no odd vertex adjacent to a 2+ thread} $G$ has no odd-vertex which anchors a $2^+$-thread.
\item\label[observation]{obs:odd - } Let $v$ be a $4^+$-vertex of $G$. If $v$ has an odd degree, then the number of 2-vertices on a thread anchored by $v$ is at most $\deg_G(v)$. If $v$ has an even degree, then the number of 2-vertices on a thread anchored by $v$ is at most $3\deg_G(v)-5$.
\end{enumerate}
\end{observation}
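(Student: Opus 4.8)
The plan is to read each of the four items as the assertion that a certain local configuration is \emph{reducible}, and to prove each by the standard deletion--extension scheme: assuming the configuration occurs in the minimum counterexample $G$, delete a small set $S$ of vertices (a single $1$-vertex, or the $2$-vertices of the thread at hand), note that $G-S$ is planar of girth at least $10$ with fewer vertices, apply minimality to get an odd $4$-coloring $\phi$ of $G-S$, and extend $\phi$ to an odd $4$-coloring of $G$ for a contradiction. Three elementary observations carry the extensions. \textbf{(i)} A $2$-vertex $x$ has an odd color under any coloring that colors all of $N(x)$ if and only if its two neighbors receive distinct colors, since a neighborhood of size two contains a color an odd number of times exactly when it contains it once. \textbf{(ii)} A vertex of odd degree has an odd color under \emph{every} coloring of its neighborhood, because the multiset of neighbor-colors has odd total size and so cannot have all multiplicities even; in particular an odd-degree anchor never obstructs an extension. \textbf{(iii)} If the neighborhood of a vertex $u$ gains one new neighbor colored $c$, then $u$ keeps an odd color provided $c$ avoids one ``critical'' color of $u$: if $u$ already had two colors of odd multiplicity then any $c$ works, and otherwise the unique such color $\alpha$ satisfies $\alpha\neq\phi(u)$, so at least two of the four colors are safe choices for $c$.

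\emph{Items (1) and (3).} For item (1), let $v$ be a $1$-vertex with neighbor $u$; since $G$ is connected, $u$ is non-isolated in $G-v$ unless $G=K_2$, which is not a counterexample. Take $\phi$ on $G-v$, color $v$ with a color different from $\phi(u)$ and from the critical color of $u$ (possible by (iii)), and note $v$ gets an odd color automatically as $|N(v)|=1$. For item (3), let the odd-vertex $v$ anchor a thread $P=v_1\cdots v_k$ with other anchor $w$; by hypothesis $k\geq 2$ and by item (2) we may assume $k\leq 3$, so girth at least $10$ rules out the cycle through $v$ and $P$ and gives $v\neq w$. Delete $V(P)$, take $\phi$ on $G-V(P)$, and extend by choosing colors $c_1,\dots,c_k$ so that $\phi(v),c_1,\dots,c_k,\phi(w)$ is a proper sequence in which consecutive and distance-two colors differ — the distance-two condition is exactly what (i) needs for each $v_i$ to have an odd color — while by (iii) only $c_k$ must dodge the critical color of $w$, since the odd-degree anchor $v$ imposes nothing by (ii). With only one critical color in play and $k\leq 3$, a short check (choose a middle color of $P$ first and propagate outward) shows the four colors suffice.

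\emph{Item (2).} Suppose $P=v_1\cdots v_k$ is a thread with $k\geq 4$ and anchors $u,w$. Delete $V(P)$, take $\phi$ on $G-V(P)$, and seek colors $c_1,\dots,c_k$ as in item (3), except that now both $u$ and $w$ may have even degree, so by (iii) we must dodge a critical color with each of $c_1$ and $c_k$ in addition to the properness and distance-two constraints along the path. The constraint graph here is the square of a path, which is $3$-chromatic and $2$-degenerate; since $k\geq 4$ there is room to first fix $c_1$ and $c_k$ and then color the interior, and the extra fourth color absorbs the remaining constraints — the bookkeeping at the two ends requires care, and the shortest cases are finished by a direct finite analysis.

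\emph{Item (4).} The odd-degree case follows at once from item (3): if $v$ has odd degree it anchors no $2^+$-thread, so (since $G$ is not a cycle, every $2$-vertex lies on a thread) each of its $\deg(v)$ edges leads either to a $3^+$-vertex or to the lone $2$-vertex of a $1$-thread, for at most $\deg(v)$ two-vertices in total. The even-degree case is the crux, and the step I expect to be the main obstacle. Item (2) gives the weak bound $3\deg(v)$ — each of the $\deg(v)$ edges carries a thread with at most three $2$-vertices — and one must shave off $5$. Argue by contradiction: if $v$ has even degree $d$ and the threads anchored at $v$ contain at least $3d-4$ two-vertices, then their length profile is extremal, with at most one edge of $v$ reaching a $3^+$-vertex and almost all edges leading to $3$-threads. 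Delete $v$ together with every $2$-vertex on a thread anchored by $v$, apply minimality to the remainder, and rebuild: color $v$ first (at most one of its neighbors is already colored, leaving at least three colors), then color each thread inward from its far anchor, where by (iii) each far anchor only asks that one critical color be avoided. The obstacle is that $v$ now has even degree, so (ii) is useless for it: an odd color for $v$ must be manufactured by choosing the colors at the thread-endpoints of $v$ so that the multiset they form — together with the color of the single $3^+$-neighbor of $v$ if present — has a color of odd multiplicity, while every thread is still completed so that its $2$-vertices and its far anchor keep odd colors. Proving this always succeeds needs a careful case analysis on the thread-length profile at $v$, and the slack of $5$ in the $2$-vertex count is precisely what guarantees either the $3^+$-neighbor or a short enough thread to give the room needed; this bookkeeping is the delicate part, and it is what is imported from \cite{cho2023odd}.
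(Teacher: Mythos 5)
The paper does not actually prove this observation: it cites Claims~5.1--5.3 of \cite{cho2023odd} and remarks that those proofs do not use the girth hypothesis, so they transfer verbatim. Your attempt takes a genuinely different route by reconstructing the arguments directly. Your preliminary observations (i)--(iii) are all correct, and your treatment of item~(1), item~(3), and the odd-degree half of item~(4) is complete and sound. In particular, using (ii) to note that the odd-degree anchor $v$ imposes no odd-color constraint in $G$ is exactly the leverage that makes item~(3) and the odd half of~(4) clean; and your reduction of~(4)-odd to items~(1)--(3) (every edge of $v$ reaches a $3^+$-vertex or a single $1$-thread vertex) is correct.

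Two parts of the proposal are incomplete, and you largely acknowledge this. For item~(2), the scheme ``first fix $c_1$ and $c_k$, then color the interior'' does not work as stated: for $k=4$, with $\phi(u)=\phi(w)$, the two interior lists after fixing $c_1,c_4$ can both collapse to the same singleton, forcing $c_2=c_3$. The constraint system is still solvable --- e.g.\ one can aim $c_2,c_3$ at $\alpha_u$ and $\{\phi(u),\alpha_u\}\cup\{\phi(w),\alpha_w\}$ to unlock both ends, or (for even degree anchors, where multiple odd colors exist) exploit the freedom to reselect $\oc$ --- but that finite analysis needs to be written out rather than waved at. For the even-degree half of item~(4), you explicitly defer to \cite{cho2023odd}, which is where the paper defers as well; this is the hardest part of the observation, since the rebuilt vertex $v$ has even degree in $G$, so (ii) gives nothing for it and an odd color must be manufactured by coordinating the colors chosen at $v$'s thread-endpoints, a nontrivial case analysis on the thread-length profile that is beyond the deletion--extension skeleton you set up. So: (1), (3), (4)-odd proved; (2) plausibly sketched but with a flawed shortcut that needs repair; (4)-even cited, matching the paper's own treatment.
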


As $G$ is connected, every face has a boundary. The following corollary shows the boundary can always be written in terms of arrays.
\begin{corollary}\label{cor: all faces have array representations}
The boundary of every face of $G$ has an array representation.
\end{corollary}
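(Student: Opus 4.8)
The plan is to show that the boundary walk of any face $f$ of $G$ decomposes into arrays, by reading the degree sequence of the boundary and greedily parsing it from left to right. The key point is that the arrays in \cref{def:array} are precisely the "maximal runs of $2$-vertices together with their anchoring $3^+$-vertices'', so the decomposition should be essentially forced once we know which runs of consecutive $2$-vertices can occur and what their anchors look like.

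First I would use \cref{obs:odd} to control the local structure. By \cref{obs:odd - no degree 1 vertex}, $G$ has minimum degree $2$, so every vertex on the boundary is a $2$-vertex or a $3^+$-vertex. By \cref{obs:odd - no 4+ thread}, any maximal subpath of consecutive $2$-vertices on the boundary has length at most $3$ (it is a $k$-thread for some $k \le 3$), and such a thread is anchored on both sides by $3^+$-vertices. This means the boundary walk, read cyclically, alternates between blocks of $1$, $2$, or $3$ consecutive $2$-vertices and single $3^+$-vertices (with possibly two $3^+$-vertices adjacent, i.e.\ a block of zero $2$-vertices between them). Now I would match each such block to an array: a run of three $2$-vertices flanked by $3^+$-vertices is exactly $a_4 = 4^+\!-\!2\!-\!2\!-\!2$ once we invoke \cref{obs:odd - } (or \cref{obs:odd - no odd vertex adjacent to a 2+ thread}) to rule out a $3$-vertex anchor of a $3$-thread, so both anchors are $4^+$; a run of two $2$-vertices flanked by $3^+$-vertices is $a_3 = 4^+\!-\!2\!-\!2$, again because a $2$-thread of length $2$ cannot be anchored by a $3$-vertex (odd degree) by \cref{obs:odd - no odd vertex adjacent to a 2+ thread}, forcing both anchors to be $4^+$; a single $2$-vertex between two $3^+$-vertices is $a_2^{\Good}$, $a_2^{\Bad}$, or $a_2^{\Worst}$ according to how many of its two neighbors are $3$-vertices; and a ``block of zero $2$-vertices,'' i.e.\ a $3^+$-vertex directly followed by another $3^+$-vertex, is $a_1 = 3^+$.

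The bookkeeping step is to check that these blocks tile the cyclic boundary sequence consistently: each array ends with a $3^+$-entry that is shared as the starting $3^+$-entry of the next array (the arrays are defined as subsequences of slightly longer degree patterns precisely so that consecutive arrays overlap at their shared anchor, and the concatenation convention writes the shared vertex once). Concretely, I would fix a starting point of the closed walk at any $3^+$-vertex (one exists since $G$ is not a cycle — a cycle would be $2$-regular, contradicting e.g.\ the existence of a face with an array representation being needed only when there are higher-degree vertices; more carefully, a $2$-regular connected planar graph is a single cycle $C_n$ with $n \ge 10$, and $\odd(C_n) \le 4$ for $n \ne 5$, so $G$ is not $2$-regular and the boundary of $f$ contains a $3^+$-vertex), then walk around $f$ and peel off arrays one at a time; each step consumes one maximal $2$-run plus the trailing anchor, and \cref{obs:odd} guarantees the consumed piece is a legal array. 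When we return to the start, the walk is exhausted, giving the desired array representation.

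I expect the main obstacle to be the edge cases in this parsing rather than any deep idea: handling faces whose boundary is not a simple cycle (so a vertex may appear more than once on the boundary walk, and a $2$-vertex could in principle be incident to $f$ from ``both sides''), and making sure the overlap convention for concatenating arrays is applied correctly so that we do not double-count or omit an anchor. A $2$-vertex has degree $2$, so it is incident to exactly two faces (or to $f$ twice only if it is a cut vertex situation, which cannot happen at a degree-$2$ vertex), so in fact each appearance of a $2$-vertex on the boundary walk of $f$ behaves well; the genuinely delicate point is a $3^+$-vertex appearing multiple times on $\partial f$, but since we only read its degree and the arrays are purely about the degree sequence, repeated appearances cause no problem — each appearance is just another $3^+$-entry in the cyclic sequence and gets absorbed as (the anchor of) whichever array it falls into. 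So the proof reduces to: min degree $\ge 2$, no $4^+$-thread, and no odd vertex anchoring a $2^+$-thread, all from \cref{obs:odd}, plus the observation that these constraints make the only possible maximal $2$-runs have length $0$ through $3$ with the anchor-degree restrictions built into $a_1, a_2, a_3, a_4$.
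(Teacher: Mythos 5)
Your proposal is correct and follows essentially the same route as the paper: it invokes \cref{obs:odd}\ref{obs:odd - no degree 1 vertex} to exclude degree-$1$ vertices, \cref{obs:odd}\ref{obs:odd - no 4+ thread} to bound maximal $2$-runs by length $3$, and \cref{obs:odd}\ref{obs:odd - no odd vertex adjacent to a 2+ thread} to force $4^+$-anchors on $2^+$-threads, concluding that the maximal $2$-runs together with their anchors tile the boundary by arrays. You spell out the greedy parsing and the overlap-of-anchors convention more explicitly than the paper (which simply asserts the conclusion), and you add the small but genuine point that $G$ is not a cycle, so each face boundary contains a $3^+$-vertex from which the parse can start — a detail the paper leaves implicit.
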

\begin{proof}
By \ref{obs:odd - no degree 1 vertex} of \cref{obs:odd}, the degree sequence of any face boundary never contains 1. 
By \ref{obs:odd - no 4+ thread}, the degree sequence of any face boundary does not contain the subsequence $2 -2 -2 -2$. 
By \ref{obs:odd - no odd vertex adjacent to a 2+ thread}, the degree sequence of any face boundary does not contain the subsequence $3 - 2 -2$. 
Therefore, the only subsequences the degree sequence of a face boundary can contain are exactly the arrays as defined in \cref{def:array}. Thus the degree sequence of every face boundary can be written as a concatenation of arrays. 
\end{proof}

\begin{observation}\label{cutvertex}
No $2$-vertex of $G$ is a cut vertex.
\end{observation}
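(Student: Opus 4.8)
The plan is to argue by contradiction: suppose some $2$-vertex $v$ of $G$ is a cut vertex, with neighbors $u$ and $w$ (distinct, since by \ref{obs:odd - no degree 1 vertex} there is no $1$-vertex and $G$ is connected with $|V(G)| \ge 2$). Removing $v$ disconnects $G$ into components; let $A$ be the union of $v$ together with the component(s) of $G - v$ containing $u$, and let $B$ be the union of $v$ together with the remaining component(s), so that $A \cap B = \{v\}$, $A \cup B = G$, and $w \in B \setminus \{v\}$, $u \in A \setminus \{v\}$. Both $A$ and $B$ are proper subgraphs of $G$ (each omits at least one vertex), and both are planar with girth at least $10$ since they are subgraphs of $G$. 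By minimality of $G$, each admits an odd $4$-coloring; call these $\phi_A$ and $\phi_B$.

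Next I would glue the two colorings. The only shared vertex is $v$, so by permuting the four colors in $\phi_B$ we may assume $\phi_A(v) = \phi_B(v)$; moreover we have enough freedom to also arrange $\phi_A(u) = \phi_A(v)$'s complement is irrelevant — what we actually need is a coloring $\phi$ on $V(G) = V(A) \cup V(B)$ agreeing with $\phi_A$ on $A$ and $\phi_B$ on $B$. This $\phi$ is automatically a proper coloring: every edge of $G$ lies entirely in $A$ or entirely in $B$ (since $v$ is a cut vertex, no edge joins $A \setminus \{v\}$ to $B \setminus \{v\}$), and $\phi$ restricts to a proper coloring on each. Now I check the odd-color condition at every non-isolated vertex $x$. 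If $x \ne v$, then $N_G(x)$ is contained entirely in $A$ or entirely in $B$ (again because $v$ is a cut vertex of degree $2$, so $x \notin \{u,w\}$ would have $N_G(x)$ on one side, and even $x \in \{u,w\}$ has all its neighbors on its own side plus possibly $v$, which is still on that side), so the neighborhood multiset of $x$ under $\phi$ equals its neighborhood multiset under $\phi_A$ or $\phi_B$, which already has a color of odd multiplicity.

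The only vertex needing care is $v$ itself, with $N_G(v) = \{u, w\}$ where $u \in A$, $w \in B$. Under $\phi$, the neighborhood multiset of $v$ is $\{\phi_A(u), \phi_B(w)\}$. If $\phi_A(u) \ne \phi_B(w)$, then each appears once — odd multiplicity — and we are done; if $\phi_A(u) = \phi_B(w)$, then $v$ has no odd color and the argument as stated fails. So the main obstacle is precisely this coincidence, and the fix is to exploit our freedom to permute colors: choose the color permutation applied to $\phi_B$ so that $\phi_B(v) = \phi_A(v)$ \emph{and} $\phi_B(w) \ne \phi_A(u)$. This is possible because $w \ne v$ forces $\phi_B(w) \ne \phi_B(v) = \phi_A(v)$ (properness of $\phi_B$), so $\phi_B(w)$ and $\phi_A(v)$ are already distinct colors; with four colors available there is a permutation fixing $\phi_A(v)$ and sending $\phi_B(w)$ to any of the three colors other than $\phi_A(v)$ — in particular to something other than $\phi_A(u)$ (we may even have $\phi_A(u) = \phi_A(v)$ is impossible by properness of $\phi_A$, so $\phi_A(u)$ is itself one of the three non-$\phi_A(v)$ colors, and we just avoid it). With this choice $v$ receives an odd color, $\phi$ is an odd $4$-coloring of $G$, contradicting that $G$ is a counterexample. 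Hence no $2$-vertex of $G$ is a cut vertex. I expect the write-up to be short; the only subtlety worth stating carefully is the permutation step that simultaneously matches the color of $v$ and separates the colors of $u$ and $w$.
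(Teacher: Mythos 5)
Your proof is correct but takes a genuinely different route from the paper's. You split $G$ at the cut vertex $v$ into two overlapping subgraphs $A$ and $B$ with $A\cap B=\{v\}$, invoke minimality twice to odd-$4$-color each piece, and then glue the two colorings after permuting the colors of $\phi_B$ so that $\phi_B(v)=\phi_A(v)$ while $\phi_B(w)\neq\phi_A(u)$. The paper instead deletes $v$ outright, takes a single odd $4$-coloring of $G-v$, and then independently permutes the colors on the two components of $G-v$ so that the two neighbors of $v$ have odd color $1$ and own colors $2$ and $3$, after which $v$ can be colored $4$. The paper's version is slightly leaner --- one invocation of minimality and a simpler-to-state permutation --- but your gluing picture is arguably more conceptual, making explicit that a cut vertex lets you solve the problem independently on each side and stitch the answers together. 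Two small slips in your write-up: the reason $\phi_B(w)\neq\phi_B(v)$ is properness along the edge $vw$ (which lies in $B$), not merely $w\neq v$; and the fragment beginning ``moreover we have enough freedom to also arrange $\phi_A(u)=\phi_A(v)$'s complement is irrelevant'' is garbled and should simply be cut.
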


\begin{proof}
    Suppose for contradiction that $v$ is a cut vertex of $G$ with $\deg(v) = 2$. Let $N(v) = \{x_1,x_2\}$. Let $\vp$ be an odd 4-coloring of $G - v$ and let $C_1$ and $C_2$ be the components of $G-v$ containing $x_1$ and $x_2$ respectively. Since $C_1$ and $C_2$ are disconnected, we can permute the colors on each component independently such that $\oc(x_1) = 1$, $\vp(x_1) =2$, $\oc(x_2) = 1$, and $\vp(x_2) = 3$. We can then color $\vp(v) = 4$, which extends $\vp$ to $G$ and it is a contradiction.
\end{proof}

\begin{observation}\label{Obs:TheOnlyBadColoringOfa2Thread}
    Let $x_1x_2$ be a $2$-thread with anchors $y_1$ and $y_2$, respectively. Let $\vp$ be an odd 4-coloring of $G - \{x_1, x_2\}$. If $\vp$ cannot be extended to $G$, then $\vp(y_1) \neq \vp(y_2)$ and $\oc(y_1) = \oc(y_2)$.
\end{observation}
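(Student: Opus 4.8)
The plan is to regard extending $\vp$ as a small constraint-satisfaction problem: decide whether there are colours $c_1,c_2\in[4]$ that can be placed on $x_1,x_2$ so that the result is again an odd $4$-colouring. Write $H$ for $G-\{x_1,x_2\}$. Since $x_1,x_2$ are $2$-vertices with $N_G(x_1)=\{y_1,x_2\}$ and $N_G(x_2)=\{x_1,y_2\}$, and since $y_1\neq y_2$ (otherwise $y_1x_1x_2$ is a triangle, contradicting girth at least $10$), the only vertices whose neighbourhoods change between $G$ and $H$ are $x_1,x_2,y_1,y_2$. Hence setting $\vp(x_1)=c_1$ and $\vp(x_2)=c_2$ yields an odd $4$-colouring of $G$ exactly when: (i) $c_1\neq\vp(y_1)$, $c_2\neq\vp(y_2)$, and $c_1\neq c_2$ (properness); (ii) $c_2\neq\vp(y_1)$ and $c_1\neq\vp(y_2)$ --- because a $2$-vertex has an odd colour precisely when its two neighbours receive distinct colours, so these are exactly the conditions that $x_1$ and $x_2$ have odd colours; and (iii) $y_1$ and $y_2$ keep an odd colour. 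For (iii), note each $y_i$ has an odd colour in $H$ (as $\vp$ is odd and $\deg_H(y_i)=\deg_G(y_i)-1\ge 2$), and appending one vertex of colour $c_1$ to $N(y_1)$ destroys every odd colour of $y_1$ only when $y_1$ had a \emph{unique} odd colour $\alpha_1$ in $H$ and $c_1=\alpha_1$; so (iii) says $c_1\neq\alpha_1$ whenever $y_1$ has a unique odd colour $\alpha_1$, and symmetrically for $y_2$.

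Collecting these, I would set $S_1=[4]\setminus\big(\{\vp(y_1),\vp(y_2)\}\cup\{\alpha_1\}\big)$ (omitting $\alpha_1$ from the union if $y_1$ has no unique odd colour) and define $S_2$ symmetrically; then $\vp$ extends to $G$ iff there are $c_1\in S_1$ and $c_2\in S_2$ with $c_1\neq c_2$. Since $\alpha_i$ is the colour of some neighbour of $y_i$, we have $\alpha_i\neq\vp(y_i)$, so $|S_1|,|S_2|\ge 1$, and an admissible pair $(c_1,c_2)$ fails to exist precisely when $S_1=S_2$ and $|S_1|=|S_2|=1$. Now split on whether $\vp(y_1)=\vp(y_2)$. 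If $\vp(y_1)=\vp(y_2)$, then $\{\vp(y_1),\vp(y_2)\}\cup\{\alpha_1\}$ has at most two elements, so $|S_1|\ge 2$ and the extension exists; hence failure of the extension forces $\vp(y_1)\neq\vp(y_2)$. If instead $\vp(y_1)\neq\vp(y_2)$, write $\{b_1,b_2\}=[4]\setminus\{\vp(y_1),\vp(y_2)\}$; then, using $\alpha_i\neq\vp(y_i)$, each $S_i$ equals $\{b_1,b_2\}$, $\{b_1\}$, or $\{b_2\}$, and it is a singleton only if $y_i$ has a unique odd colour, equal to the element of $\{b_1,b_2\}$ not in $S_i$. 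So the extension fails only if $S_1=S_2=\{b_j\}$ for some $j$, which forces $y_1$ and $y_2$ to have a common unique odd colour, i.e.\ $\oc(y_1)=\oc(y_2)$. Combining the two cases gives exactly the statement.

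I expect the only delicate point to be condition (iii): one has to argue carefully that adding a single neighbour to $y_i$ can cost $y_i$ its odd colour only in the special situation where $y_i$ has a unique odd colour, and then correctly identify which single colour must be forbidden on $x_i$; once that is pinned down the rest is a short count of the sets $S_1,S_2$ inside $[4]$. It may also be worth recording, although the argument above does not strictly need it, that each $y_i$ has even degree in $G$ by \cref{obs:odd}, so $\deg_H(y_i)$ is odd and $y_i$ carries an odd (hence nonzero) number of odd colours in $H$.
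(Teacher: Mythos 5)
Your argument is correct, and it takes a somewhat different route than the paper's. You derive an exact characterization of extendability: writing $\alpha_i$ for the unique odd color of $y_i$ in $H=G-\{x_1,x_2\}$ (when it exists) and $S_i=[4]\setminus\big(\{\vp(y_1),\vp(y_2)\}\cup\{\alpha_i\}\big)$, an extension exists iff one can choose $c_1\in S_1$, $c_2\in S_2$ with $c_1\neq c_2$; since each $S_i$ is nonempty, this fails precisely when $S_1=S_2$ is a singleton, which (after ruling out $\vp(y_1)=\vp(y_2)$) forces $\alpha_1=\alpha_2$ to be the unique element of $[4]\setminus\{\vp(y_1),\vp(y_2)\}$ not lying in $S_1$. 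The paper instead argues by contrapositive and constructs an explicit extension: it picks $\vp(x_1)\in[4]\setminus\{\oc(y_1),\vp(y_1),\vp(y_2)\}$ (specifically $\vp(x_1)=\oc(y_2)$ in one subcase), and shows the set $S=\{\vp(x_1),\oc(y_2),\vp(y_1),\vp(y_2)\}$ of colors to avoid on $x_2$ has size at most $3$, so a choice of $\vp(x_2)$ survives. Both are short counting arguments in $[4]$, but your version makes the iff explicit and isolates the exact obstruction, whereas the paper's is a leaner one-direction construction with a couple of subcases. One small expository point: your step (iii) is stated as an ``only when,'' but your reduction from (iii) to the single constraint $c_1\neq\alpha_1$ (and its symmetric twin) implicitly uses the converse as well --- that if $y_1$ has unique odd color $\alpha_1$ then setting $c_1=\alpha_1$ really does destroy all of $y_1$'s odd colors. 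That converse is true (all other colors in $N_H(y_1)$ have even multiplicity, and adding $\alpha_1$ makes $\alpha_1$ even too), so the proof is sound, but it is worth stating the equivalence rather than just one direction.
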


\begin{proof}
    Let $y_1x_1x_2y_2$ be as defined and let $\vp$ be an odd $4$-coloring of $G - \{x_1, x_2\}$. Suppose for contradiction that either $\vp(y_1) = \vp(y_2)$ or $\oc(y_1) \neq \oc(y_2)$. Extend the coloring $\vp$ by first coloring $\vp(x_1) \in [4] \setminus \{\oc(y_1), \vp(y_1), \vp(y_2)\}$. Now consider the set $S = \{\vp(x_1), \oc(y_2), \vp(y_1), \vp(y_2)\}$. If $\vp(y_1) = \vp(y_2)$, then $|S| \le 3$ and we can finish extending $\vp$ to $G$ by coloring $\vp(x_2) \in [4] \setminus S$, which is a contradiction. Similarly, if $\oc(y_2) = \vp(y_1)$, then $|S| \le 3$ and again we may extend $\vp$ to $G$.

    Otherwise, we have $\vp(y_1) \neq \vp(y_2)$ and $\oc(y_2) \neq \vp(y_1)$. By the hypothesis, either $\vp(y_1) = \vp(y_2)$ or $\oc(y_1) \neq \oc(y_2)$. Since $\vp(y_1) \neq \vp(y_2)$, it follows that $\oc(y_1) \neq \oc(y_2)$. Clearly, we also have $\oc(y_2) \neq \vp(y_2)$. Thus, it follows that $\oc(y_2) \in [4] \setminus \{\oc(y_1), \vp(y_1), \vp(y_2)\}$. Therefore, we can color $\vp(x_1) = \oc(y_2)$ so that $|S| \le 3$, allowing us to extend $\vp$ to $G$.
    
\end{proof}

\begin{observation}\label{Obs:TheOnlyBadColoringOfa3Thread}
    Let $x_1 x_2 x_3$ be a $3$-thread with anchors $y_1$ and $y_2$. Let $\vp$ be an odd $4$-coloring of $G - \{x_1,x_2,x_3\}$. If $\vp$ cannot be extended to $G$, then $\oc(y_1)=\vp(y_2)$ and $\oc(y_2) = \vp(y_1)$.
\end{observation}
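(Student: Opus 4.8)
The plan is to argue by contradiction in the same spirit as the proof of \cref{Obs:TheOnlyBadColoringOfa2Thread}, extending the coloring $\vp$ across the $3$-thread $x_1x_2x_3$ vertex by vertex and showing that if the conclusion $\oc(y_1)=\vp(y_2)$ and $\oc(y_2)=\vp(y_1)$ fails, then we have enough freedom to complete the coloring. Write $P = y_1 x_1 x_2 x_3 y_2$. The key observation is that each $x_i$ has exactly two neighbors, so when we color $x_i$ we must avoid at most two already-colored colors to keep the coloring proper, and the $4$th color gives slack. Since $x_2$ has both neighbors on the thread, the properness constraint at $x_2$ is easy to satisfy; the real constraints come from ensuring that $x_1$ receives an odd color in $N(x_1) = \{y_1, x_2\}$ (equivalently $\vp(y_1) \ne \vp(x_2)$ suffices, since then both colors in $N(x_1)$ are odd), that $x_3$ receives an odd color (similarly $\vp(x_2) \ne \vp(y_2)$ suffices), that $x_2$ gets an odd color (need $\vp(x_1) \ne \vp(x_3)$), and that $y_1$ and $y_2$ still have odd colors after we color in their thread-neighbors $x_1, x_3$.

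First I would record what must be preserved at the anchors: once $\vp(x_1)$ and $\vp(x_3)$ are chosen, $y_1$ needs an odd color in its neighborhood; since $\oc(y_1)$ was an odd color of $y_1$ in $G-\{x_1,x_2,x_3\}$ and $x_1$ is the only neighbor of $y_1$ whose color changes, by \cref{lemma:three-and-two-choose-odd}-type reasoning $\oc(y_1)$ remains odd for $y_1$ unless $\vp(x_1)=\oc(y_1)$ — and even then $y_1$ may still have some odd color, but it is safest to choose $\vp(x_1)\ne \oc(y_1)$ so that $\oc(y_1)$ stays odd for $y_1$. Likewise aim for $\vp(x_3)\ne\oc(y_2)$. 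So the ``bad'' event we must rule out is precisely being forced to set $\vp(x_1)=\oc(y_1)$ or $\vp(x_3)=\oc(y_2)$ while also failing the internal oddness conditions.

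Next I would do the case analysis on the pair $(\vp(y_1),\vp(y_2))$ and $(\oc(y_1),\oc(y_2))$. The proof breaks into: (a) if $\vp(y_1)=\vp(y_2)$, then color $\vp(x_2)$ avoiding this one color, then $\vp(x_1)$ avoiding $\{\vp(y_1),\vp(x_2),\oc(y_1)\}$ which is at most $3$ colors (and automatically $\ne \vp(y_1)=\vp(y_2)$ so $x_1$ has an odd color and doesn't disturb $y_2$'s constraint), then $\vp(x_3)$ avoiding $\{\vp(y_2),\vp(x_2),\oc(y_2),\vp(x_1)\}$ — here is the one spot needing care, since that is up to $4$ forbidden colors, so I would instead choose $\vp(x_2)$ and $\vp(x_1)$ to collide deliberately with one of $\{\vp(y_2),\oc(y_2)\}$ to free a color for $x_3$; this is exactly the $|S|\le 3$ trick from the $2$-thread proof, iterated once more. (b) If $\vp(y_1)\ne\vp(y_2)$ but $\oc(y_1)\ne\vp(y_2)$ or $\oc(y_2)\ne\vp(y_1)$, say $\oc(y_2)\ne\vp(y_1)$: then I would color $\vp(x_1)$ from $[4]\setminus\{\oc(y_1),\vp(y_1)\}$ picking it equal to $\oc(y_2)$ if that lies in this set (it does when $\oc(y_2)\ne\oc(y_1)$ and $\oc(y_2)\ne\vp(y_1)$), so that later $x_3$ has one fewer real restriction; the symmetric sub-case handles $\oc(y_1)\ne\vp(y_2)$. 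In every branch the contrapositive forces $\oc(y_1)=\vp(y_2)$ and $\oc(y_2)=\vp(y_1)$.

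The main obstacle I anticipate is the counting at $x_3$: it is the last vertex colored and simultaneously sees $\vp(y_2)$ (properness), must differ from $\vp(x_2)$ (so $x_2$ gets an odd color), must differ from $\oc(y_2)$ (so $y_2$ keeps an odd color), and wants to keep $x_3$ itself odd via $\vp(x_2)\ne\vp(y_2)$. With only $4$ colors this is tight, so the whole argument hinges on choosing $\vp(x_1)$ and $\vp(x_2)$ cleverly in the earlier steps — making them coincide with members of the ``threat set'' $\{\vp(y_2),\oc(y_2)\}$ whenever the hypothesis permits — so that the final set of colors forbidden at $x_3$ has size at most $3$. I expect this bookkeeping, rather than any conceptual difficulty, to be where the proof spends its effort, and the failure of exactly these deliberate collisions is what pins down the stated necessary condition $\oc(y_1)=\vp(y_2)$, $\oc(y_2)=\vp(y_1)$.
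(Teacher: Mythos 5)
Your proposal is in the same conceptual family as the paper's argument --- both hinge on a ``deliberate collision'' that shrinks the forbidden set at the last vertex colored --- but you take a genuinely different route, and the difference matters.

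The paper's proof has no case split on $(\vp(y_1),\vp(y_2))$ at all. Assuming for contradiction that (WLOG) $\oc(y_1) \neq \vp(y_2)$, it colors in the order $x_2, x_3, x_1$, with the key move being $\vp(x_2) \defeq \oc(y_1)$. The reason this is the ``right'' collision is that $x_2$ is the middle vertex: it is adjacent to neither $y_1$ nor $y_2$, so setting its color to $\oc(y_1)$ cannot threaten either anchor's odd color, and yet it guarantees $|\{\vp(x_2),\vp(x_3),\vp(y_1),\oc(y_1)\}| \le 3$ when we finally color $x_1$. The oddness of $x_3$ comes for free because $\vp(x_2) = \oc(y_1) \neq \vp(y_2)$ by the WLOG assumption, and the rest is immediate. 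That is the whole proof.

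Your plan instead mirrors the $2$-thread proof literally: start at an endpoint, case on $\vp(y_1) \stackrel{?}{=} \vp(y_2)$, and aim to set $\vp(x_1) = \oc(y_2)$. This creates exactly the friction you flag: $\vp(x_1)$ must simultaneously avoid $\oc(y_1)$ (to preserve $y_1$'s odd color), so when $\oc(y_1) = \oc(y_2)$ your preferred collision target is off-limits. You note the condition ``$\oc(y_2)\ne\oc(y_1)$'' as required but never say what happens if it fails, and your ``symmetric sub-case'' doesn't rescue it since the same $\oc(y_1)=\oc(y_2)$ obstruction recurs there. The case is fillable --- when $\vp(y_1) \neq \vp(y_2)$ one can show that $[4]\setminus\{\oc(y_1),\vp(y_1)\}$ missing \emph{both} of $\vp(y_2),\oc(y_2)$ forces $\{\oc(y_1),\vp(y_1)\}=\{\vp(y_2),\oc(y_2)\}$, which is precisely the stated conclusion, so the fallback collision $\vp(x_1)=\vp(y_2)$ is always available --- but you don't state this equivalence, and it is the crux of why your branch closes. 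Your case (a) ($\vp(y_1)=\vp(y_2)$) is also superfluous: it automatically implies $\oc(y_1)\neq\vp(y_2)$, so it is absorbed by the paper's single WLOG hypothesis. In short, the approach is sound and the hard spot you identify is the real one, but choosing to collide at $x_1$ rather than $x_2$ is what makes your bookkeeping unavoidably messier than the paper's four-line construction.
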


\begin{proof}
Let $y_1x_1x_2x_3y_2$ be as defined and let $\vp$ be an odd $4$-coloring of $G - \{x_1,x_2,x_3\}$. Suppose for contradiction that $\oc(y_1) = \vp(y_2)$ or $\oc(y_2) = \vp(y_1)$. If $\oc(y_1) \neq \vp(y_2)$, then we can extend $\vp$ by coloring $\vp(x_2) = \oc(y_1)$, coloring $\vp(x_3) \in [4] \setminus \{\vp(x_2), \vp(y_2), \oc(y_2)\}$, and coloring $\vp(x_1) \in [4] \setminus \{\vp(x_2), \vp(x_3), \vp(y_1), \oc(y_1)\}$, which is a contradiction. The coloring for $x_1$ is possible because $\vp(x_2) = \oc(y_1)$ so $|\{\vp(x_2), \vp(x_3), \vp(y_1), \oc(y_1)\}| \le 3$. A symmetric argument applies if $\oc(y_2) \neq \vp(y_1)$.

\end{proof}

\begin{observation}\label{obs:3vx4nb}

Every $3$-vertex in $G$ has a $4^+$-neighbor of even degree. 

\end{observation}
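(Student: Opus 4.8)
The plan is to delete $v$ together with all of its $2$-neighbors, take an odd $4$-coloring of the resulting graph by minimality of $G$, and extend it to $G$, contradicting the choice of $G$. So suppose towards a contradiction that $v$ is a $3$-vertex none of whose neighbors is an even-degree $4^+$-vertex. Since $G$ has no $1$-vertex by \cref{obs:odd - no degree 1 vertex}, every neighbor of $v$ is a $2$-vertex, a $3$-vertex, or an odd-degree $5^+$-vertex. Moreover, as $v$ is an odd-vertex, \cref{obs:odd - no odd vertex adjacent to a 2+ thread} forces every $2$-neighbor of $v$ to lie on a $1$-thread; for such a $2$-neighbor $v_i$ write $w_i$ for the other anchor of its $1$-thread, which is a $3^+$-vertex. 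Set $S \defeq \{v\} \cup \{\,v_i : v_i \text{ is a $2$-neighbor of } v\,\}$. Since $G$ has girth at least $10$, and in particular has no $3$- or $4$-cycles, the vertices $w_i$ are pairwise distinct and distinct from the remaining (``heavy'') neighbors of $v$; moreover each $w_i$ has exactly one neighbor in $S$, namely $v_i$, and each heavy neighbor of $v$ has exactly one neighbor in $S$, namely $v$.

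The crux is a parity computation. Fix an odd $4$-coloring $\varphi$ of $G - S$. First, reinserting $v$ with \emph{any} color preserves the odd-color property of each heavy neighbor $u$: in $G - S$ the vertex $u$ has degree $2$ (if $u$ is a $3$-vertex) or even degree at least $4$ (if $u$ is an odd-degree $5^+$-vertex), so the number of colors appearing with odd multiplicity in $N_{G-S}(u)$ is even, and it is nonzero because $u$ is non-isolated and $\varphi$ is an odd coloring; hence it is at least $2$. Reinserting $v$ changes exactly one color's multiplicity in $N(u)$ by one, flipping the parity of just that color class, so at least one color of odd multiplicity remains. The same computation applied to $w_i$, which has degree at least $2$ in $G - S$, shows that reinserting $v_i$ can destroy the odd-color property of $w_i$ only when $N_{G-S}(w_i)$ has a \emph{unique} color $c_i^\ast$ of odd multiplicity and $\varphi(v_i) = c_i^\ast$; that is, at most one color on $v_i$ is forbidden for the sake of $w_i$. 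Finally, once reinserted, a $2$-neighbor $v_i$ has exactly the two neighbors $v$ and $w_i$, so it has an odd color if and only if $\varphi(v) \neq \varphi(w_i)$; and $v$, having degree $3$, automatically has a color of odd multiplicity in its neighborhood as soon as all of its neighbors are colored.

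It remains to carry out the extension. Let $t$ be the number of $2$-neighbors of $v$, so $v$ has $3 - t$ heavy neighbors. Color $v$ with a color of $[4]$ that avoids the colors of the $3 - t$ heavy neighbors and the colors $\varphi(w_i)$ of the $t$ anchors; at most $(3 - t) + t = 3$ colors are excluded, so such a color exists. By the preceding paragraph this choice preserves the odd-color property of every heavy neighbor and ensures $\varphi(v) \neq \varphi(w_i)$ for each $i$. Next color each $2$-neighbor $v_i$ — these are pairwise non-adjacent, as otherwise $v$ would lie in a triangle, and reinserting $v_i$ affects only $v$, $v_i$, and $w_i$ — with a color of $[4]$ avoiding $\varphi(v)$, $\varphi(w_i)$, and the possible exceptional color $c_i^\ast$, again at most $3$ forbidden colors. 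The result is a proper coloring in which every vertex of $N[S]$ has an odd color: the heavy neighbors and the $w_i$ by the parity computation, $v$ by its degree, and each $v_i$ because $\varphi(v) \neq \varphi(w_i)$; and vertices outside $N[S]$ are untouched. Hence $\varphi$ extends to an odd $4$-coloring of $G$, the desired contradiction.

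The step I expect to be the main obstacle is the parity bookkeeping of the second paragraph — in particular, the observation that adding one vertex to a neighborhood cannot destroy the existence of an odd-multiplicity color unless there was exactly one such color, together with the verification that, after removing $v$ (respectively $v_i$), a heavy neighbor of $v$ (respectively an anchor $w_i$) is never in that exceptional single-odd-color state. This is precisely where the hypothesis that $v$ has no even-degree $4^+$-neighbor is used, via the resulting degree parities, and where the girth bound is needed to control exactly which of the relevant vertices are adjacent.
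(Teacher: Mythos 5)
Your proof is correct and follows essentially the same strategy as the paper's: delete the $3$-vertex $v$ and its $2$-neighbors, take an odd $4$-coloring of the remainder, color $v$ avoiding the (at most three) colors appearing on the heavy neighbors and the far anchors $w_i$, then color each $2$-neighbor $v_i$ avoiding $\phi(v)$, $\phi(w_i)$, and one odd color of $w_i$. The only cosmetic difference is that you verify the heavy neighbors retain an odd color via an explicit parity count on the number of odd-multiplicity colors in $N_{G-S}(u)$, whereas the paper leans implicitly on the fact that any odd-degree vertex automatically has an odd color in every proper coloring; both collapse to the same set of forbidden colors at each step.
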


\begin{proof}
Suppose for contradiction that $u \in V(G)$ is a 3-vertex with only $2$-neighbors and odd-neighbors. Let $X$ be the set of $2$-neighbors of $u$, $G' = G - (X \cup \{u\})$, and $\vp$ be an odd $4$-coloring of $G'$. Consider the set of vertices $S = \bigcup_{v \in X \cup \{u\}} N_{G'}(v)$ and the set of colors $C = \{\vp(v) : v \in S\}$. Since $\deg_G(u) = 3$, $|S| \le 3$ and hence $|C| \le 3$. Extend $\vp$ to $G$ by first coloring $\vp(u) \in [4] \setminus C$. Then for each $v \in X$, let $N_G(v) = \{u, w\}$ and extend $\vp$ by coloring $\vp(v) \in [4] \setminus \{\vp(u), \vp(w), \oc(w)\}$. This yields an odd 4-coloring of $G$, which is a contradiction.
\end{proof}

\begin{observation}\label{Obs:1ThreadNextToOdd}
    $G$ has no path $v_1v_2v_3v_4v_5$ where $\deg(v_1)$ is odd and $\deg(v_2) = \deg(v_4) = \deg(v_5) = 2$.
\end{observation}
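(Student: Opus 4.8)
The plan is to suppose that $G$ contains such a path $P=v_1v_2v_3v_4v_5$ and derive a contradiction: after deleting a short thread on the $v_4$-side and recoloring the single $2$-vertex $v_2$, the resulting coloring will violate the obstruction described in \cref{Obs:TheOnlyBadColoringOfa2Thread} or \cref{Obs:TheOnlyBadColoringOfa3Thread}, hence it extends to $G$.

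First I would pin down the local structure. By \cref{obs:odd} there is no $1$-vertex, so $\deg(v_1)\ge 3$; also $\deg(v_3)\ge 3$, since otherwise $v_2v_3v_4v_5$ would lie inside a $4^+$-thread. Thus $v_2$ is a $1$-thread with anchors $v_1,v_3$. Let $v_6$ be the neighbor of $v_5$ other than $v_4$. If $\deg(v_6)\ge 3$, then $v_4v_5$ is a $2$-thread with anchors $v_3,v_6$; otherwise $\deg(v_6)=2$, and since $G$ has no $4^+$-thread, $v_4v_5v_6$ is a $3$-thread with anchors $v_3$ and some $3^+$-vertex $v_7$. In either case $v_3$ anchors a $2^+$-thread, so $\deg(v_3)$ is even by \cref{obs:odd}; together with $\deg(v_3)\ge 3$ this forces $\deg(v_3)\ge 4$. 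The girth hypothesis makes the vertices of $P$ together with $v_6$ (and $v_7$) distinct and induces exactly the path edges; in particular $N(v_2)=\{v_1,v_3\}$, and $v_4$ is the only neighbor of $v_3$ lying in $\{v_4,v_5,v_6\}$.

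Now set $G'=G-\{v_4,v_5\}$ in the $2$-thread case and $G'=G-\{v_4,v_5,v_6\}$ in the $3$-thread case, and let $\vp$ be an odd $4$-coloring of $G'$, which exists by minimality of $G$ (applied to each component if $G'$ is disconnected). Since $v_2$ is still a $2$-vertex of $G'$, validity of $\vp$ at $v_2$ forces $\vp(v_1)\ne\vp(v_3)$, so $v_2$ has exactly two admissible colors $c,c'$, namely $[4]\setminus\{\vp(v_1),\vp(v_3)\}$. Let $M$ be the multiset of $\vp$-colors on $N_{G'}(v_3)\setminus\{v_2\}$; as exactly one neighbor of $v_3$ was deleted, $|M|=\deg(v_3)-2$ is even. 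Put $\gamma$ equal to a fixed odd color of $v_6$ under $\vp$ (in the $2$-thread case) or $\gamma=\vp(v_7)$ (in the $3$-thread case). The key claim is that for some $x\in\{c,c'\}$ the multiset $M\uplus\{x\}$ has a color of odd multiplicity other than $\gamma$: since $M\uplus\{x\}$ has odd size $\deg(v_3)-1$ it always has \emph{some} color of odd multiplicity, and a short parity comparison of $M\uplus\{c\}$ and $M\uplus\{c'\}$ (they differ by swapping one $c$ for one $c'$) shows that if $\gamma$ were the only such color for both $x$, then $c$ or $c'$ would also have odd multiplicity --- a contradiction.

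Finally, recolor $v_2$ with such an $x$, obtaining $\vp'$. It is again a proper odd $4$-coloring of $G'$: properness holds as $x\notin\{\vp(v_1),\vp(v_3)\}$; $v_2$ still sees two distinct colors, so it has an odd color; $v_3$ has an odd color $\ne\gamma$ by the claim; $v_1$ has an odd color since $\deg(v_1)$ is odd; and no other neighborhood changed. Choosing the odd color of $v_3$ under $\vp'$ to be this color $\ne\gamma$ makes the hypothesis of \cref{Obs:TheOnlyBadColoringOfa2Thread} fail in the $2$-thread case (the odd colors of the anchors $v_3,v_6$ can be taken distinct) and the hypothesis of \cref{Obs:TheOnlyBadColoringOfa3Thread} fail in the $3$-thread case (the odd color of $v_3$ differs from $\vp'(v_7)$); in either case $\vp'$ extends to an odd $4$-coloring of $G$, a contradiction. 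The only delicate point is this last parity bookkeeping for $v_3$: we need to award $v_3$ an odd color that dodges the single forbidden value $\gamma$ using only the binary freedom in the color of $v_2$, and this works precisely because $\deg(v_3)$ is even (so $N_{G'}(v_3)$ has odd size and an odd-multiplicity color is automatic) while $\deg(v_1)$ is odd (so recoloring $v_2$ can never cost $v_1$ its odd color); the possible disconnectedness of $G'$ is a harmless technicality.
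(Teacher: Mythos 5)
Your proof is correct and follows the paper's approach exactly: delete the $2$- or $3$-thread on the $v_4$ side, observe that $v_1$ and $v_3$ both have odd degree in $G'$, and exploit the two admissible recolorings of $v_2$ to escape the obstruction in \cref{Obs:TheOnlyBadColoringOfa2Thread} or \cref{Obs:TheOnlyBadColoringOfa3Thread}. The only difference is that you spell out the parity comparison showing one of the two recolorings gives $v_3$ an odd color avoiding $\gamma$ --- a step the paper asserts without elaboration --- and your verification of that claim is right.
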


\begin{proof}
    Suppose for contradiction that $G$ contains a path $v_1v_2v_3v_4v_5$ where $\deg(v_1)$ is odd and $\deg(v_2)=\deg(v_4)=\deg(v_5) = 2$. By \cref{obs:odd}\ref{obs:odd - no odd vertex adjacent to a 2+ thread} $v_3$ is an even-vertex. Since $v_4$ and $v_5$ are $2$-vertices, they are part of either a $2$-thread or a $3$-thread. Let the thread be $P$ with anchors $v_3$ and $w$ and let $\vp$ be an odd $4$-coloring of $G' = G - V(P)$.

    Observe that $\deg_{G'}(v_1)$ and $\deg_{G'}(v_3)$ are odd, so recoloring $v_2$ to a color in $[4] \setminus \{\vp(v_1), \vp(v_3)\}$ still yields an odd 4-coloring of $G'$. At least one of the recolorings avoids the situation in \cref{Obs:TheOnlyBadColoringOfa2Thread} and \cref{Obs:TheOnlyBadColoringOfa3Thread} and therefore $\vp$ can be extended to an odd 4-coloring of $G$, which is a contradiction.
    
\end{proof}

\begin{corollary}\label{lem:2badarray_adjacent}
No face of $G$ has an array representation containing  $a_2^{\Bad}a_3$, $a_3a_2^{\Bad}$, $a_2^{\Bad}a_4$, or $a_4a_2^{\Bad}$ as a subsequence.
\end{corollary}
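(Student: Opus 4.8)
The plan is to unpack what it means for a face boundary to contain $a_2^{\Bad}a_3$ (or one of the other three listed concatenations) as a subsequence, and then exhibit an explicit short path in that boundary that matches the forbidden path of \cref{Obs:1ThreadNextToOdd}. Recall from \cref{def:array} that $a_2^{\Bad}$ is realized either as the fragment $3-2$ sitting inside $3-2-4^+$ or as the fragment $4^+-2$ sitting inside $4^+-2-3$, and that $a_3$ is the fragment $4^+-2-2$ sitting inside $4^+-2-2-4^+$. So I would first write out, for each of the four cases $a_2^{\Bad}a_3$, $a_3a_2^{\Bad}$, $a_2^{\Bad}a_4$, $a_4a_2^{\Bad}$, the full degree sequence of the portion of the boundary these arrays cover, remembering that adjacent arrays in a representation share the $4^+$-endpoints (the ``overlap'' convention in \cref{def:array}, as illustrated by the example following it). The key observation is that in each case the concatenation forces a consecutive boundary subsequence of the form (odd-degree $3$-vertex)$\,-\,2\,-\,2\,-\,2$ or a $3$-vertex followed by enough $2$-vertices, i.e. precisely a $3$-vertex adjacent along the boundary to three consecutive $2$-vertices $v_2, v_3, v_4$ — wait, more carefully: $a_2^{\Bad}$ contributes a $3$-vertex next to a $2$-vertex, and the adjacent $a_3$ or $a_4$ contributes (after the shared $4^+$-vertex) two more $2$-vertices, giving a walk $3 - 2 - 4^+ - 2 - 2$, which does not immediately match.

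So the actual mechanism I expect is slightly different: in the representation $a_2^{\Bad}a_3$, the $a_2^{\Bad}$ array ends at a $4^+$-vertex which is then the starting $4^+$-vertex of $a_3$; but whether that $4^+$-vertex has odd or even degree is not constrained. Instead, the relevant forbidden structure comes from \cref{Obs:1ThreadNextToOdd} applied with $v_1$ the $3$-vertex of the $a_2^{\Bad}$ array — but a $3$-vertex is odd-degree. Concretely, if the boundary reads $3 - 2 - 4^+ - 2 - 2$ (from $a_2^{\Bad}$ in its $3-2-4^+$ incarnation followed by $a_3$), then taking $v_1$ to be the $3$-vertex, $v_2$ the following $2$-vertex, $v_3$ the $4^+$-vertex — no, $v_3$ must have degree $2$ in that observation. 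Let me reconsider: \cref{Obs:1ThreadNextToOdd} forbids $v_1 v_2 v_3 v_4 v_5$ with $\deg v_1$ odd and $\deg v_2 = \deg v_4 = \deg v_5 = 2$ (with $v_3$ unconstrained, hence any degree, in particular a $4^+$-vertex is fine!). So the walk $3 - 2 - 4^+ - 2 - 2$ is exactly of this form: $v_1$ the $3$-vertex (odd), $v_2$ a $2$-vertex, $v_3$ the $4^+$-vertex, $v_4, v_5$ the two $2$-vertices. That is the contradiction. For $a_3 a_2^{\Bad}$ one reads the same walk in reverse: $2 - 2 - 4^+ - 2 - 3$, again matching after relabeling. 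For $a_2^{\Bad}a_4$ and $a_4a_2^{\Bad}$, the $a_4$ array supplies three consecutive $2$-vertices, so the walk is $3 - 2 - 4^+ - 2 - 2 - 2$ (or its reverse), which contains the forbidden $5$-path $3 - 2 - 4^+ - 2 - 2$ as a sub-path. One subtlety: in the incarnation of $a_2^{\Bad}$ as $4^+ - 2 - 3$, adjacent to $a_3$ as $a_2^{\Bad}a_3$, the shared vertex is the $3$-vertex? No — arrays are concatenated at $4^+$-vertices only, and the $4^+ - 2 - 3$ incarnation ends at a $3$-vertex, which cannot be shared with $a_3$ (which begins at a $4^+$-vertex). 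Hence in $a_2^{\Bad}a_3$ only the $3 - 2 - 4^+$ incarnation of $a_2^{\Bad}$ is possible, and symmetrically for the other cases; this case analysis needs to be spelled out but is routine.

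So the steps, in order: (1) recall the array definitions and the concatenation-at-$4^+$-vertices convention; (2) for each of the four listed subsequences, determine which incarnations of $a_2^{\Bad}$ are compatible with the adjacent array and write out the resulting consecutive boundary degree sequence; (3) in each case locate a sub-walk of the form $v_1 v_2 v_3 v_4 v_5$ with $\deg v_1$ odd and $\deg v_2 = \deg v_4 = \deg v_5 = 2$; (4) invoke \cref{Obs:1ThreadNextToOdd} to derive a contradiction, using that $G$ is the minimum counterexample. The main obstacle — really the only thing requiring care — is step (2): correctly bookkeeping how arrays overlap at their $4^+$-endpoints and confirming that no incarnation of $a_2^{\Bad}$ is ruled out in a way that breaks the argument; in particular one must check that the $2$-vertex of $a_2^{\Bad}$ adjacent to its $3$-vertex is genuinely distinct from the $2$-vertices contributed by the neighboring $a_3$ or $a_4$, so that the $5$-path (or $6$-path) has five distinct vertices as required by the observation. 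Everything else is bookkeeping.
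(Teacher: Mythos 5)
Your proposal is correct and takes the same route as the paper: reduce each of the four forbidden concatenations to a consecutive boundary subsequence with degrees $3 - 2 - 4^+ - 2 - 2$ (possibly after reversal), and then invoke \cref{Obs:1ThreadNextToOdd}. One small inaccuracy in your bookkeeping: adjacent arrays in a representation do \emph{not} share or ``overlap'' at a common $4^+$-endpoint --- they are disjoint consecutive blocks, and the ``next vertex'' constraints in \cref{def:array} (e.g., $a_2^{\Bad}$ in form $3\!-\!2$ requires the \emph{following} vertex to be $4^+$, which is then the first vertex of the next array) are what force the compatible incarnation of $a_2^{\Bad}$ in each case. Your concrete derivations ($3-2-4^+-2-2$, etc.) are nonetheless right, and you correctly flag that one should check the five vertices are distinct (this follows from \cref{cutvertex} plus the fact that the three $2$-vertices, the $3$-vertex, and the $4^+$-vertex have pairwise distinguishable degrees).
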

\begin{proof}
If a face of $G$ were to have any of these as a subsequence, then $G$ would have a path with degree sequence $3-2-4^+-2-2$. But by \cref{Obs:1ThreadNextToOdd}, this cannot occur.
\end{proof}

\begin{lemma}\label{greedylemma}
    Any cycle of $G$ whose array representation  contains $a_4$ also contains $a_1$ or $a_2$. 
\end{lemma}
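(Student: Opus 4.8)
My plan is to argue by contradiction. Suppose $C$ is a cycle of $G$ whose array representation uses only $a_3$ and $a_4$ (these being the only arrays besides $a_1,a_2$), with at least one $a_4$. Then, reading off the boundary, $C$ is a cyclic sequence of $4^+$-vertices $y_1,\dots,y_m$ — the \emph{anchors} — joined by threads $P_1,\dots,P_m$, where $P_i$ runs from $y_i$ to $y_{i+1}$ and has length $2$ (from an $a_3$) or $3$ (from an $a_4$), with at least one of length $3$. Since no $a_1$ occurs, $C$ has no two consecutive $3^+$-vertices, so each anchor is flanked on $C$ by two $2$-vertices, namely the endpoints of $P_{i-1}$ and $P_i$ meeting it; and since every $P_i$ has length $\ge 2$, \ref{obs:odd - no odd vertex adjacent to a 2+ thread} of \cref{obs:odd} forces every anchor to have even degree, hence degree $\ge 4$.

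I would then delete from $G$ all $2$-vertices lying on $C$, obtaining a graph $G'$ that is planar, has girth at least $10$, and has fewer vertices than $G$, so by minimality of $G$ it has an odd $4$-colouring $\vp$. The crucial point is that each anchor $y_i$ loses exactly two neighbours, so $\deg_{G'}(y_i)=\deg_G(y_i)-2$ is even and positive; hence the number of colours occurring an odd number of times in $N_{G'}(y_i)$ is even, and it is nonzero because $\vp$ is an odd colouring — so $y_i$ has at least two odd colours under $\vp$. I would use this slack to extend $\vp$ across the threads. The $2$-vertices of different $P_i$ are pairwise non-adjacent and each $P_i$ meets the rest of $G'$ only at the anchors $y_i,y_{i+1}$, whose colours are fixed, so the extension splits thread-by-thread; the sole coupling is that every anchor must retain an odd colour. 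To control this I would fix in advance, for each $i$, an odd colour $\gamma_i$ of $y_i$ in $G'$, and demand when colouring $P_i$ that (a) the colouring stays proper, (b) every newly coloured $2$-vertex sees two distinct colours (so it has an odd colour), and (c) the endpoint of $P_i$ at $y_j$ avoids $\gamma_j$ for $j\in\{i,i+1\}$, so that afterwards $\gamma_j$ still occurs an odd number of times at $y_j$.

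The real work is choosing the $\gamma_i$ coherently around $C$. First relabel so $P_m$ is a $3$-thread. Fix $\gamma_1$ to be an odd colour of $y_1$ distinct from $\vp(y_m)$, possible since $y_1$ has at least two odd colours. Then process $P_1,\dots,P_{m-1}$ in order; when $P_i$ is reached $\gamma_i$ is already set, and I would pick a valid $\gamma_{i+1}$ and colour $P_i$. Here a short check is needed: for a $2$-thread $P_i=x_1x_2$ (say $x_1\sim y_i$) one needs $\vp(x_1)\in[4]\setminus\{\vp(y_i),\gamma_i,\vp(y_{i+1})\}$ and $\vp(x_2)\in[4]\setminus\{\vp(y_i),\vp(y_{i+1}),\gamma_{i+1},\vp(x_1)\}$, and — splitting on whether the first set has one or two elements — one verifies, using that $y_{i+1}$ has at least two odd colours, that some odd colour $\gamma_{i+1}$ of $y_{i+1}$ (with a suitable choice of $\vp(x_1)$) leaves the second set nonempty; for a $3$-thread the extra middle vertex makes this even more flexible. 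Finally I would colour $P_m$, a $3$-thread with anchor colours $\gamma_m,\gamma_1$: the only obstruction to completing a $3$-thread under (a)--(c) is the simultaneous occurrence of $\gamma_m=\vp(y_1)$ and $\gamma_1=\vp(y_m)$, which is excluded because $\gamma_1\ne\vp(y_m)$. This yields an odd $4$-colouring of $G$, contradicting the choice of $G$.

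The hard part is this last step — carrying the choice of the $\gamma_i$ all the way around $C$ and avoiding a clash when the loop closes. The device that resolves it is having a $3$-thread to put last: a $3$-thread can always be completed once $\gamma_1$ is pre-committed to miss the single forbidden value $\vp(y_m)$, so it absorbs the wrap-around constraint. For a cycle all of whose threads had length $2$ this extra room would be unavailable, consistent with the lemma asserting the conclusion only when an $a_4$ is present.
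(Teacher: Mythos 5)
Your proposal is correct and is, at its core, the same argument as the paper's: delete the $2$-vertices of $C$, obtain an odd $4$-colouring of $G'$ by minimality, observe that each anchor (an even $4^+$-vertex) loses exactly two neighbours and hence has at least two odd colours in $G'$, and greedily recolour the $2$-vertices around $C$, using the $a_4$'s slack to close the wrap-around. The only real difference is organizational: the paper starts at the anchor $v_1$ just before the $a_4$, reserves both of its odd colours $\alpha,\beta$ when colouring $v_2$ (which is possible precisely because $v_2$ sits in a $3$-thread, so no nearby anchor colour must also be avoided), and then proceeds vertex-by-vertex so that only the last vertex $v_n$ can spoil one of $\alpha,\beta$; you instead pre-commit one odd colour $\gamma_i$ per anchor, proceed thread-by-thread, and save the distinguished $3$-thread for last, where the initial commitment $\gamma_1\neq\vp(y_m)$ rules out the unique double-clash obstruction to completing a constrained $3$-thread. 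These are two bookkeeping schemes for the same greedy extension; yours isolates the role of the $a_4$ a bit more transparently, while the paper's avoids carrying a reserved colour for every anchor. The verifications you leave implicit for internal threads (that some $\gamma_{i+1}$ together with a suitable $\vp(x_1)$ always leaves a colour free for the last vertex of $P_i$) do check out, using that among $y_{i+1}$'s at least two odd colours one can always be chosen to differ from $\gamma_i$ or from $\vp(y_i)$ as needed.
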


\begin{proof}
Suppose for contradiction that $G$ contains a cycle $C = v_1v_2\cdots v_{n}$ whose array representation contains $a_4$ but no $a_1$ nor $a_2$. Without loss of generality suppose $v_1v_2v_3v_4$ has degree sequence $a_4$ ($4^+ - 2 - 2 - 2$). Let $G' = G-S$ where $S = \{v_i\in C: \deg(v_i)=2\}$ and let $\vp$ be an odd 4-coloring of $G'$. We will greedily extend $\vp$ to a coloring of $G$. 

By \cref{obs:odd}\ref{obs:odd - no odd vertex adjacent to a 2+ thread}, $\deg_G(v_1)$ is even. Since $a_1$ does not appear in any array representation of $f$, $\deg_G(v_n) = 2$. Hence, $\deg_{G'}(v_1) = \deg_G(v_1) - 2$ so $\deg_{G'}(v_1)$ is also even. Therefore, there exist distinct odd colors $\alpha, \beta$ of $v_1$ under $\vp$.

First, extend $\vp$ by setting $v_2$ to a color in $[4]\setminus\{\vp(v_1),\alpha,\beta\}$. Until $\vp$ is extended to all of $G$, choose the vertex $v_i \in S$ with smallest index $i$ such that $\vp$ is not extended to $v_i$ and color it 
\begin{align*}
    \vp(v_i) \in \begin{cases}
        [4] \setminus \{\vp(v_{i-1}), \oc(v_{i-1})\} & \text{if $v_{i+1}, v_{i+2} \in S$,}\\
        [4] \setminus \{\vp(v_{i-1}), \oc(v_{i-1}), \vp(v_{i+1})\} & \text{if $v_{i+1} \not\in S$,}\\
        [4] \setminus \{\vp(v_{i-1}), \oc(v_{i-1}), \vp(v_{i+2})\} & \text{if $v_{i+2} \not\in S$.}
    \end{cases}
\end{align*}
This is well-defined because no array representation of $C$ contains $a_1$, so at most one of $v_{i+1}$ and $v_{i+2}$ does not lie in $S$. It is clear that this iterative process leads to a proper coloring of $G$. Since no array representation of $C$ has an $a_2$, our process leads to each vertex in $S$ having an odd color.

Suppose $v_i \in C \setminus S$ and $i \neq 1$. Immediately after coloring $v_{i-1}$ in the above process, $v_i$ may not have an odd color. If this is the case, then it will have an odd color after coloring $v_{i+1}$. If $v_i$ does have an odd color after coloring $v_{i-1}$, then the above process avoids $\oc(v_i)$ when coloring $v_{i+1}$ so $v_i$ continues to have an odd color.

Finally, consider $v_1$. The two colors $\alpha$ and $\beta$ are odd colors for $v_1$ prior to coloring any vertex in $S$. Since $\vp(v_2)$ is neither $\alpha$ nor $\beta$, no matter what $v_n$ is colored, at least one of $\alpha$ or $\beta$ remains an odd color of $\vp(v_1)$. Thus, we have extended $\vp$ to an odd 4-coloring of $G$ which is a contradiction.
\end{proof}

\begin{corollary}\label{cor: Greedy faces}
 Let $f$ be a face of $G$. Then any array representation of $f$ which contains $a_4$ also contains $a_1$ or $a_2$. 
\end{corollary}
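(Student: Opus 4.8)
The plan is to deduce \cref{cor: Greedy faces} from \cref{greedylemma} by extracting, out of the boundary walk of $f$, an honest cycle of $G$ that inherits the forbidden array pattern. Suppose for contradiction that some array representation $\mathcal{A}$ of $f$ contains $a_4$ but neither $a_1$ nor $a_2$. Then $\mathcal{A}$ is a concatenation of copies of $a_3$ and $a_4$ with at least one $a_4$, and since each of $a_3$ and $a_4$ begins with a $4^+$-vertex and is otherwise made of $2$-vertices, the boundary walk $W$ of $f$ has the following shape: every vertex of $W$ that is not a $2$-vertex is a $4^+$-vertex; every $4^+$-vertex of $W$ is the leading entry of its array and hence is immediately preceded on $W$ by the final ($2$-vertex) entry of the previous array, so no two $4^+$-vertices of $W$ are consecutive; and consecutive $4^+$-vertices of $W$ are separated by a block of exactly two or three $2$-vertices.

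First I would show that $W$ is a closed trail, i.e.\ uses no edge twice. If some edge $e$ occurred twice on $W$, then $e$ would be a bridge of $G$; since $G$ has minimum degree at least $2$ by \cref{obs:odd}\ref{obs:odd - no degree 1 vertex}, each endpoint of $e$ would be a cut vertex, hence not a $2$-vertex by \cref{cutvertex}, hence a $4^+$-vertex of $W$; but an edge of $W$ joins two consecutive vertices of $W$, so this would put two consecutive $4^+$-vertices on $W$, which we have just excluded. Thus $W$ traverses each of its edges exactly once. Consequently every $2$-vertex $v$ on $W$ occurs exactly once and has both of its edges on $W$ (these two edges flank $v$ at its unique occurrence), so the two $G$-neighbors of $v$ are precisely its two $W$-neighbors; it follows that each block of two or three consecutive $2$-vertices along $W$ is in fact a $2$-thread or a $3$-thread of $G$, with its flanking $4^+$-vertices as anchors.

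Being a closed trail, $W$ decomposes into edge-disjoint cycles of $G$. Fix an occurrence of $a_4$ in $\mathcal{A}$, realized by vertices $u_0u_1u_2u_3u_4$ appearing consecutively on $W$ with $\deg_G(u_1)=\deg_G(u_2)=\deg_G(u_3)=2$ and $\deg_G(u_0),\deg_G(u_4)\ge 4$, and let $C$ be the cycle of the decomposition containing the edge $u_1u_2$. Each of $u_1,u_2,u_3$ has degree $2$, so both of its edges lie on $C$, and therefore $C$ contains the whole of $u_0u_1u_2u_3u_4$; so an array representation of $C$ contains $a_4$. By the same ``a degree-$2$ vertex keeps its thread intact'' principle, every maximal run of $2$-vertices along $C$ is a maximal run of $2$-vertices along $W$ — hence a $2$- or $3$-thread — and is flanked on $C$ by $4^+$-vertices, while no two $4^+$-vertices are consecutive on $C$ since $C\subseteq W$ and they are not consecutive on $W$. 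Hence the degree sequence of $C$ is a cyclic concatenation of copies of $a_3$ and $a_4$ containing at least one $a_4$: that is, $C$ is a cycle of $G$ with an array representation containing $a_4$ but neither $a_1$ nor $a_2$, contradicting \cref{greedylemma}.

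The only genuinely load-bearing step is this last paragraph: one must verify that replacing the facial walk $W$ by the cycle $C$ neither creates an $a_1$ or an $a_2$ nor destroys every $a_4$. This works because the vertices that could cause trouble — the $2$-vertices — have degree $2$ in all of $G$, so an entire thread is either wholly contained in $C$ or disjoint from it; everything else is routine bookkeeping together with two standard facts about plane graphs, namely that a bridge borders a single face and is traversed twice by that face's walk, and that a closed trail is an edge-disjoint union of cycles.
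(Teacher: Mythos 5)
Your proof is correct and follows the paper's own route: reduce to a cycle contained in the boundary of $f$ and invoke \cref{greedylemma}. The paper simply asserts that such a cycle exists; you rightly treat this as nontrivial (the boundary walk of $f$ need not be a cycle) and carefully justify it via the bridge/cut-vertex argument from \cref{obs:odd}\ref{obs:odd - no degree 1 vertex} and \cref{cutvertex}, the decomposition of a closed trail into edge-disjoint cycles, and the observation that degree-$2$ vertices keep their threads intact inside any cycle of that decomposition.
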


\begin{proof}
Suppose for contradiction that $f$ is a face whose array representation contains $a_4$ but not $a_1$ nor $a_2$. Then $f$ must contain a cycle whose array representation contains $a_4$ but not $a_2$ nor $a_1$, which cannot occur by \cref{greedylemma}.
\end{proof}

\begin{observation}\label{obs:10face4222}
    $G$ contains no $10$-face with degree sequence $3^+ - 2 - 2^+ - 2 - 3^+ - 2 - 3^+ - 2 - 2 - 2$.
\end{observation}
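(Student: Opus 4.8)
The plan is to suppose $G$ contains such a $10$-face $f$ and reach a contradiction via the delete--color--extend method used throughout this section.

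Since $G$ has girth at least $10$, the boundary of a $10$-face is a $10$-cycle; write it $v_1v_2\cdots v_{10}$ with $\deg(v_1)\ge 3$, $\deg(v_3)\ge 2$, $\deg(v_5),\deg(v_7)\ge 3$ and $\deg(v_2)=\deg(v_4)=\deg(v_6)=\deg(v_8)=\deg(v_9)=\deg(v_{10})=2$. First I would pin down the large degrees. As $v_8v_9v_{10}$ is a $3$-thread with anchors $v_1$ and $v_7$, \cref{obs:odd} (no odd vertex anchors a $2^+$-thread) forces $\deg(v_1)$ and $\deg(v_7)$ to be even, hence $\ge 4$. Applying \cref{Obs:1ThreadNextToOdd} to the path $v_5v_6v_7v_8v_9$ (its second, fourth and fifth vertices are $2$-vertices) gives $\deg(v_5)$ even, hence $\ge 4$; applying it to $v_3v_2v_1v_{10}v_9$ gives $\deg(v_3)$ even. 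So either $\deg(v_3)=2$, in which case $v_2v_3v_4$ is a $3$-thread with anchors $v_1,v_5$, or $\deg(v_3)\ge 4$ and $v_2,v_4$ are $1$-threads; in both cases $v_6$ is a $1$-thread with anchors $v_5,v_7$.

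Next, the reduction: delete the $3$-thread $T=v_8v_9v_{10}$ and let $\varphi$ be an odd $4$-coloring of $G'=G-V(T)$, which exists by minimality. Note that $\deg_{G'}(v_1)=\deg_G(v_1)-1$ and $\deg_{G'}(v_7)=\deg_G(v_7)-1$ are odd, so $v_1$ and $v_7$ each keep at least one odd colour under any recolouring of their neighbors. By \cref{Obs:TheOnlyBadColoringOfa3Thread}, either $\varphi$ extends to an odd $4$-coloring of $G$, contradicting that $G$ is a counterexample, or $\varphi$ is \emph{bad}: $v_7$ has a \emph{unique} odd colour and it equals $\varphi(v_1)$, and $v_1$ has a unique odd colour and it equals $\varphi(v_7)$. (This is the precise content of the proof of \cref{Obs:TheOnlyBadColoringOfa3Thread}: the only obstruction to extending is that every odd colour of each anchor is forced to be the colour of the other anchor.) So assume $\varphi$ is bad.

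The remaining and main task is to modify $\varphi$ on $G'$ to destroy badness, then extend. Note that $v_2,v_4,v_6$ and all their neighbors survive in $G'$, that $v_2$ has exactly the neighbors $v_1,v_3$, $v_4$ has exactly $v_3,v_5$, and $v_6$ has exactly $v_5,v_7$, and (since these are non-isolated in $G'$) that $\varphi(v_1)\ne\varphi(v_3)$, $\varphi(v_3)\ne\varphi(v_5)$, $\varphi(v_5)\ne\varphi(v_7)$. The engine is this: $v_2$ has a unique alternative proper colour $\gamma$, and a direct computation (tracking which of the at most four colours flips parity in $N(v_1)$) shows that after recolouring $v_2$ to $\gamma$ the vertex $v_1$ has an odd colour different from $\varphi(v_7)$; hence, by \cref{Obs:TheOnlyBadColoringOfa3Thread} again, the new colouring (if still an odd colouring of $G'$) extends to $G$. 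Symmetrically, recolouring $v_6$ makes $v_7$ acquire an odd colour different from $\varphi(v_1)$. The obstacle is keeping the colouring an odd colouring of $G'$: recolouring $v_2$ can destroy the (only two) odd colours of $v_3$, and recolouring $v_6$ the odd colours of $v_5$, exactly when those odd colours form a particular $2$-set determined by $\varphi(v_1),\varphi(v_3)$ (resp. $\varphi(v_5),\varphi(v_7)$); the odd-degree vertices $v_1,v_7$ are never in danger. To handle these sub-cases I would first recolour $v_4$ — whose only neighbors are $v_3$ and $v_5$ — to restore an odd colour at $v_3$ or at $v_5$ before recolouring $v_2$ or $v_6$, and a finite check, organized by the pattern of coincidences among $\varphi(v_1),\varphi(v_3),\varphi(v_5),\varphi(v_7)$ and by whether $\deg(v_3)=2$ or $\deg(v_3)\ge 4$, shows that some combination of these moves always succeeds. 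I expect this last case analysis to be the bulk of the proof; everything else is routine.
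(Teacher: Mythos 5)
Your reduction (delete the $3$-thread $v_8v_9v_{10}$, take an odd $4$-coloring of $G'=G-\{v_8,v_9,v_{10}\}$, invoke \cref{Obs:TheOnlyBadColoringOfa3Thread} to characterize the obstruction, then destroy it by recolouring $v_2$, $v_4$, $v_6$) is the same as the paper's, and your key computation --- that swapping $\varphi(v_2)$ to its unique proper alternative $\gamma$ gives $v_1$ an odd colour different from $\varphi(v_7)$ --- is correct and is the crux of the paper's argument as well. The degree facts you deduce from \cref{obs:odd} and \cref{Obs:1ThreadNextToOdd} are also right, though the only one the paper actually uses is that $v_1$ and $v_7$ have even degree in $G$, so that they have odd degree in $G'$ and are therefore guaranteed to possess an odd colour under any recolouring of their neighbours.

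The genuine gap is exactly where you say you expect ``the bulk of the proof'' to be: you assert that a ``finite check, organized by the pattern of coincidences among $\varphi(v_1),\varphi(v_3),\varphi(v_5),\varphi(v_7)$'' succeeds, but you never carry it out, and your proposed order --- recolour $v_4$ first, then $v_2$ or $v_6$ --- is not obviously a closed procedure, since recolouring $v_4$ can simultaneously threaten the odd colours at both $v_3$ and $v_5$ and you do not explain how the subsequent moves repair both. The paper closes this cleanly with a one-directional sweep that needs no analysis of coincidence patterns: recolour $v_2$ (this fixes $v_1$ and can break only $v_3$, since $N_{G'}(v_2)=\{v_1,v_3\}$ and $v_1$ has odd degree in $G'$); if $v_3$ then has no odd colour, recolour $v_4$ to a proper alternative and apply \cref{lemma:three-and-two-choose-odd} to the colour multiset of $N_{G'}(v_3)$ to restore it (this cannot touch $v_1$ or $v_2$, since $N(v_4)=\{v_3,v_5\}$ and the girth is $\ge 10$); if $v_5$ is now broken, recolour $v_6$ and apply \cref{lemma:three-and-two-choose-odd} once more; finally $v_7$, having odd degree in $G'$, always retains an odd colour. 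Each step repairs the current vertex and only threatens the next one along the path, and neither $\varphi(v_1)$, $\varphi(v_7)$, nor $v_1$'s newly acquired good odd colour is ever disturbed, so \cref{Obs:TheOnlyBadColoringOfa3Thread} applies at the end. This linear-sweep structure is the missing ingredient in your sketch.
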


\begin{proof}
Suppose for contradiction that $G$ contains a $10$-face $v_1a_1v_2a_2v_3a_3v_4x_1x_2x_3v_1$ where $\deg(a_i)=\deg(x_i) = 2$ for all $i\in [3]$.
Since the girth of $G$ is at least $10$, the boundary of this face is a cycle and the incident vertices are distinct.
Let $G'=G - \{x_1,x_2,x_3\}$.
By the minimality of $G$, there is $\vp$ an odd 4-coloring of $G'$. 
Since we cannot extend $\vp$ to $G$, by \cref{Obs:TheOnlyBadColoringOfa3Thread}, we may assume $\vp(v_1) =\oc(v_4)= 1$, $\vp(v_4)=\oc(v_1)=2$, and $2$ and $1$ are unique odd colors of $v_1$ and $v_4$, respectively. 
Our goal is to recolor $a_i$'s under $\vp$ so that we obtain an odd $4$-coloring $\vp$ of $G'$ such that $\vp(v_4)=2$, but $\oc(v_1) \neq 2$.
Then by \cref{Obs:TheOnlyBadColoringOfa3Thread}, we can extend $\vp$ to $G$, which is a contradiction.

Let us recolor $a_1$ under $\vp$ with a color in $[4]\setminus \{1,2,\vp(v_2)\}$.
Since $v_1$ is an odd-vertex, $\oc(v_1)$ is guaranteed, and $\oc(v_1) \neq 2$.
If $\oc(v_2)$ exists, then $\vp$ is an odd $4$-coloring of $G'$, which we desired.
If $\oc(v_2)$ does not exist, then we recolor $a_2$ under $\vp$ with a color in $[4]\setminus \{\vp(v_2), \vp(a_2), \vp(v_3)\}$.
Then by \cref{lemma:three-and-two-choose-odd}, $\oc(v_2)$ exists.
If $\oc(v_3)$ exists, then $\vp$ is an odd $4$-coloring of $G'$, which we desired.
If $\oc(v_3)$ does not exist, then we recolor $a_3$ under $\vp$ with a color in $[4]\setminus \{\vp(v_3), \vp(a_3), \vp(v_4)\}$.
Then by \cref{lemma:three-and-two-choose-odd}, $\oc(v_3)$ exists.
Now, $\oc(v_4)$ is always guaranteed since $v_4$ is an odd-vertex.
Thus, $\vp$ is an odd $4$-coloring of $G'$, which we desired.
This is a contradiction.
\end{proof}

\cref{obs:10face4222} implies the immediate corollary in terms of arrays.

\begin{corollary}
\label{cor: a4a2goodx3 cannot occur}
$G$ contains no face with array representations $a_4a_2^{\Good}a_2^{\Good}a_2^{\Good}$ or $a_4a_4a_2^{\Good}$.
\end{corollary}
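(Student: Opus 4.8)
The plan is to obtain the corollary as a bookkeeping consequence of \cref{obs:10face4222}, by unfolding each of the two array representations into an explicit boundary degree sequence and checking that, up to a cyclic rotation, it matches the forbidden pattern $3^+-2-2^+-2-3^+-2-3^+-2-2-2$. Suppose for contradiction that $G$ has a face $f$ whose boundary has one of these array representations. By \cref{def:array}, the array $a_4$ contributes the consecutive boundary block $4^+-2-2-2$ and $a_2^{\Good}$ contributes the block $4^+-2$, each carrying the additional constraint that the boundary vertex immediately following the block is a $4^+$-vertex. Since every array appearing in the two given representations begins with a $4^+$-vertex, these following-vertex constraints (including the one wrapping around the cyclic boundary) are automatically satisfied, and the representations force the cyclic boundary degree sequences $4^+-2-2-2-4^+-2-4^+-2-4^+-2$ (for $a_4a_2^{\Good}a_2^{\Good}a_2^{\Good}$) and $4^+-2-2-2-4^+-2-2-2-4^+-2$ (for $a_4a_4a_2^{\Good}$). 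In particular each sequence has ten terms, so $f$ is a $10$-face.

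Next I would exhibit, in each case, the cyclic rotation matching the sequence to the pattern of \cref{obs:10face4222}, using only that a $4^+$-vertex is both a $3^+$-vertex and a $2^+$-vertex. For $a_4a_2^{\Good}a_2^{\Good}a_2^{\Good}$, rotating the boundary to begin at the vertex immediately after the $a_4$-block gives $4^+-2-4^+-2-4^+-2-4^+-2-2-2$, whose entries are position-by-position at least as restrictive as those of $3^+-2-2^+-2-3^+-2-3^+-2-2-2$ (the three interior $3^+$-slots and the $2^+$-slot are all filled by $4^+$-vertices). For $a_4a_4a_2^{\Good}$, rotating to begin at the leading vertex of the second $a_4$-block gives $4^+-2-2-2-4^+-2-4^+-2-2-2$, which is again entrywise at least as restrictive as the pattern, now with the lone $2^+$-slot filled by a $2$-vertex. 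In either case $f$ is a $10$-face whose degree sequence realizes the pattern forbidden by \cref{obs:10face4222}, which is the desired contradiction.

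The argument has no real obstacle; it is essentially unfolding notation. The only points warranting care are (i) verifying that the cyclic concatenation of arrays closes up consistently, so that the boundary really has exactly the stated ten-term degree sequence — and hence $f$ is genuinely a $10$-face — and (ii) choosing, in each case, the cyclic rotation that aligns the few $4^+$-vertices of the unfolded sequence with the $3^+$- and $2^+$-slots of the pattern in \cref{obs:10face4222}; once these are pinned down, the entrywise comparison is immediate.
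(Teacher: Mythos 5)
Your proof is correct and matches the paper's intent: the paper states only that \cref{obs:10face4222} ``implies the immediate corollary in terms of arrays'' and gives no further details, and your proposal is precisely the spelled-out unfolding of the two array representations into explicit ten-term cyclic degree sequences together with the rotation that aligns each with the forbidden pattern $3^+ - 2 - 2^+ - 2 - 3^+ - 2 - 3^+ - 2 - 2 - 2$. The rotations you chose work, the entrywise dominance checks ($4^+ \Rightarrow 3^+$, $2 \Rightarrow 2^+$, etc.) are right, and the observation about the wrap-around constraint of the trailing $4^+$ in $a_4$ and $a_2^{\Good}$ being satisfied by the cyclic concatenation is exactly what makes the array representations well-formed.
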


\section{The $\forb$-$\flex$ Method}
\label{sec: forb/flex}
In this section we introduce the forb-flex method. We will primarily use it in the context of odd coloring to find many reducible configurations that cannot occur in a minimum counterexample to \cref{thm:odd10}. This tool works by analyzing the structure of the neighborhood of a vertex and assigning to each structure a score based on how many colors it prevents a vertex from receiving. To this end, for a graph $G$ and $u \in V(G)$, we will partition $N(u)$ into 9 types of vertices. Note that we use terminology defined in \S\ref{sec: prelims}.

\begin{definition}
Let $G$ be a graph and $u \in V(G)$. Then $v \in N(u)$ is a:
\begin{enumerate}[label = {}]
    \item \emph{$t_i$-neighbor} if $v$ is the endpoint of an $i$-thread, for $i \in \{1, 2, 3\}$,
    \item \emph{$t_{\Good}$-neighbor} if $v$ is a good $3$-vertex,
    \item \emph{$t_{\Semibad}$-neighbor} if $v$ is a semi-bad $3$-vertex,
    \item \emph{$t_{\Bad}$-neighbor} if $v$ is a bad $3$-vertex,
    \item \emph{$t_{\Worst}$-neighbor} if $v$ is a \worst~$3$-vertex,
    \item \emph{$t_{\Even}$-neighbor} if $v$ is an even $4^+$-vertex,
    \item \emph{$t_{\Odd}$-neighbor} if $v$ is an odd $5^+$-vertex.
\end{enumerate}

\end{definition}

Many of our proofs will require us to consider the resulting graph formed from deleting a vertex $u$ and certain vertices near $u$. Therefore, we define the following:
\begin{definition}
For a graph $G$ and vertex $u \in V(G)$, define:
\begin{align*}
    S[u] := \{u\} &\cup \{v\in V(G): \text{$v$ is on a thread anchored by $u$}\}\\
    &\cup \{w\in N[v] : \text{$v$ is a  $t_{\Worst}$-neighbor of $u$}\}.
\end{align*}
See \cref{figure:vertex-types} for an example.
\end{definition}

\begin{figure}[h!]
    \begin{center}
    \includegraphics[width=.8\textwidth]{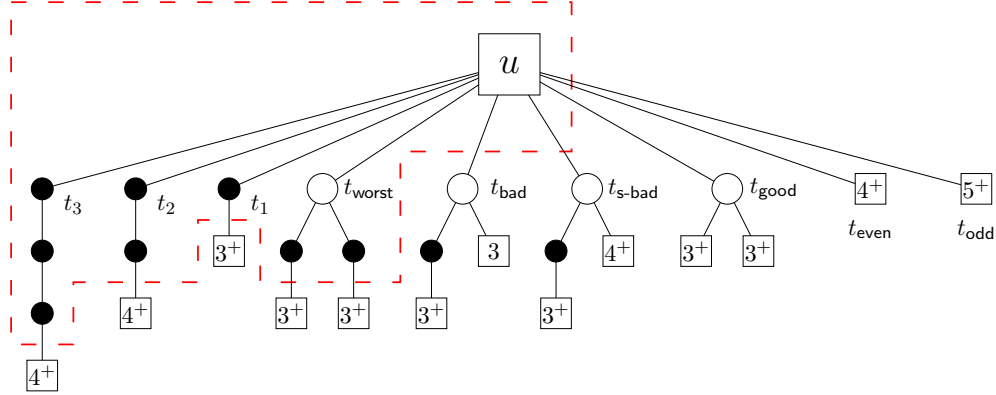}
    \caption{Illustration of the 9 neighbor types of $u$ together with $S[u]$ in red-dashed. Note that by \cref{obs:odd}\ref{obs:odd - no odd vertex adjacent to a 2+ thread}, the anchors of the $2^+$-threads are $4^+$-vertices, and the $t_{\Worst}$-, $t_{\Bad}$-, $t_{\Semibad}$-, $t_{\Good}$-neighbors of $u$ are not anchors of $2^+$-threads.}
    \label{figure:vertex-types}
    \end{center}
\end{figure}

The strategy in our proofs will be to take a minimum counterexample $G$ to \cref{thm:odd10} and consider the graph $G - S[u]$ for a particular choice of $u$. We then fix an odd $4$-coloring $\phi$ on $G - S[u]$ and attempt to extend this coloring to all of $G$. 
The key will be in choosing the right color for $u$. For instance, $u$ cannot receive the color of any of its neighbors, or the potentially only odd color of one of its even-neighbors (note that an odd-vertex always has an odd color in any proper coloring). We will strategically choose a color for $u$ such that, if necessary, one of its neighbors can be safely switched to another color. 
 To this end, we determine the situations under which a neighbor of $u$ can switch colors.
 
For instance, for a $k$-thread $v_1\cdots v_k$ with anchors $u$ and $w$, where $\vp(w)=i$ and $\oc(w)=j$ are given, certain color choices for $u$ give one and only one way to color $v_1\cdots v_k$ while maintaining the odd constraint for all vertices except $u$.
For example, if $uv_1w$ is a $1$-thread with anchors $u$ and $w$, and $\vp(w)=1, \oc(w) = 2$, and $\vp(u) = 3$, then $v_1$ must be colored $\vp(v_1)=4$. Other color choices for $u$ however allow flexibility for the color which $v_1$ receives, guaranteeing us an odd color for $u$ by \cref{lemma:three-and-two-choose-odd}. 
In the example above, if $\vp(w) = 1$, $\oc(w) = 2$, and $\vp(u) = 2$, then one can color $\vp(v_1)\in \{3,4\}$, whichever one will give $u$ an odd color.  
This motivates the following definitions.

\begin{definition}\label{def: flex thread}  Let $G$ be a graph. Let $P$ be a $k$-thread  $v_1\cdots v_k$ where $k \in [3]$ with anchors $u$ and $w$ (where $u \in N(v_1)$, and $u \neq w$). Let $\phi$ be a partial proper coloring of $G$ that includes $u$ and $w$ such that $\phi_{\mathsf{o}}(w)$ exists, and, furthermore, if $\phi(v_k)$ is defined, then $\phi_{\mathsf{o}}(w) \neq \phi(v_k)$. Then $P$ is \textit{$(u,\phi)$-flexible}
if 
\begin{enumerate}
        \item $P$ is a 1-thread and $\phi(u) = \phi_o(w)$,
        \item $P$ is a 2-thread and either $\phi(u) = \phi(w)$ or $\phi(u) = \phi_\mathsf{o}(w)$, or
        \item $P$ is a $3$-thread and $\phi(u) \neq \phi_o(w)$.
    \end{enumerate}
\end{definition}

The next lemma says that when $P$ is $(u,\phi)$-flexible, there exists two ways to extend or recolor $\phi$ on $P$ which differ at $v_1$. This guarantees that no matter how $\phi$ is extended, we can recolor $P$ to guarantee $u$ has an odd color. 

\begin{lemma}\label{lem: two choices}
Let $G$, $P$, and $\phi$ be as described in \cref{def: flex thread}, with $P$ $(u,\phi)$-flexible. Then there exists two proper colorings $\phi'$ and $\phi''$ of $\dom(\phi) \cup P$ such that
\begin{enumerate}[label = {(\arabic*)}]
    \item\label[lemma]{lem: two choices - 1} $\vp(x)= \vp'(x)=\vp''(x)$ for all $x\in \dom(\vp)\setminus P$.
    \item\label[lemma]{lem: two choices - 2} For all $x \in P \cup \{w\}$, $\phi_{\mathsf{o}}'(x)$ and $\phi_{\mathsf{o}}''(x)$ exist. 
    \item\label[lemma]{lem: two choices - 3} $\phi'(v_1)\neq \vp''(v_1)$.
\end{enumerate}
\end{lemma}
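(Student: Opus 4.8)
The plan is to handle the three cases of Definition~\ref{def: flex thread} separately, in each case exhibiting the two colorings $\phi'$ and $\phi''$ explicitly by specifying the colors on the thread vertices $v_1,\dots,v_k$ (everything outside $P$ is left untouched, which immediately gives (1)). Write $i = \phi(w)$ and $j = \phi_{\mathsf o}(w)$; by hypothesis $i \neq j$, and if $\phi(v_k)$ is already defined then $\phi(v_k)\neq j$. Throughout, the point of property (2) is twofold: the vertex $w$ must retain an odd color (we will only ever recolor $P$, and we must make sure we do not destroy $j$ as an odd color of $w$ — this is exactly why the condition $\phi(v_k)\neq j$ matters, since $v_k\in N(w)$), and each $v_\ell$ must get an odd color, which for a degree-$2$ vertex just means its two neighbors receive distinct colors (here I would invoke that every $v_\ell$ has degree $2$ and its neighbors on the thread, so ``$v_\ell$ has an odd color'' is equivalent to ``$v_\ell$'s two neighbors are colored differently'').

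\textbf{Case $k=1$ (so $\phi(u)=j$).} Then $v_1$ is adjacent to $u$ and $w$ with $\phi(u)=j$, $\phi(w)=i$, and $i\neq j$. Set $\phi'(v_1)$ and $\phi''(v_1)$ to be the two distinct colors in $[4]\setminus\{i,j\}$. Both are $\neq\phi(u)$ and $\neq\phi(w)$, so the coloring stays proper; $v_1$ gets distinct colors on its two neighbors hence has an odd color; and $w$ keeps $j=\phi_{\mathsf o}(w)$ as an odd color because we did not color $v_1$ with $j$ — note $v_1$ is the only neighbor of $w$ we are recoloring, and if $v_1$ was previously defined it equalled $v_k$ so the hypothesis $\phi(v_k)\neq j$ is what lets us conclude $j$ was odd in $N(w)$ before and the count of $j$'s in $N(w)$ is unchanged (it stays at whatever odd value it was, or more carefully: the multiset of colors on $N(w)$ changes only in one coordinate, from $\phi(v_1)$ (if defined) to the new value, neither of which is $j$, so the multiplicity of $j$ in $N(w)$ is unchanged and still odd). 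Property (3) is immediate since $\phi'(v_1)\neq\phi''(v_1)$.

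\textbf{Case $k=2$ (so $\phi(u)\in\{i,j\}$).} Here $v_1v_2$ with $u\in N(v_1)$, $w\in N(v_2)$. First color $v_1$: I would pick $\phi'(v_1)$ and $\phi''(v_1)$ to be two distinct colors avoiding $\{\phi(u)\}$ and chosen so that each still admits a valid completion at $v_2$. Concretely: whatever $\phi(v_1)\in[4]\setminus\{\phi(u)\}$ we choose, we then need $\phi(v_2)\in[4]\setminus\{\phi(v_1),i,j\}$ — this is a set of size $\le 3$ colors forbidden out of $4$ unless $\phi(v_1),i,j$ are all distinct, in which case there is exactly one choice for $v_2$; and we need this choice $\neq j$ is automatic. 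The key observation is that because $\phi(u)\in\{i,j\}$, we can pick $\phi'(v_1)$ equal to one element of $\{i,j\}$ and $\phi''(v_1)$ equal to a color outside $\{i,j\}$, arranging in each case that $v_2$ has a legal color giving $v_2$ an odd color and not equal to $j$; since $\phi'(v_1)\neq\phi''(v_1)$, (3) holds. I expect this case to require the most care in bookkeeping — one has to check that in each of the two branches the forced or chosen color of $v_2$ is both proper (differs from $i=\phi(w)$) and avoids $j$, and the two sub-cases $\phi(u)=i$ versus $\phi(u)=j$ may need slightly different choices.

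\textbf{Case $k=3$ (so $\phi(u)\neq j$).} Now $v_1v_2v_3$ with $u\in N(v_1)$, $w\in N(v_3)$. The standard trick: color $v_2$ first with $\phi(v_2)=j$ (legal since $v_2$'s only constraints come later from $v_1,v_3$); then $v_3\in[4]\setminus\{j,i\}$, of which there are two choices, both $\neq j$, so $w$ is safe; then $v_1\in[4]\setminus\{\phi(u),j\}$, and since $\phi(u)\neq j$ this forbidden set has size $2$, leaving two choices for $v_1$. Pick $\phi'(v_1)$ and $\phi''(v_1)$ to be these two distinct colors (completing $v_2,v_3$ the same way in both); then $v_1$ gets distinct colors on its neighbors $u$ and $v_2$ (since $\phi(v_1)\neq\phi(u)$ by construction, and also $\neq j=\phi(v_2)$), $v_2$ gets $v_1\neq v_3$ hence odd, $v_3$ gets $v_2=j\neq\phi(v_3)$ hence odd, $w$ keeps $j$, and $\phi'(v_1)\neq\phi''(v_1)$ gives (3).

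In all three cases the verification of (2) reduces to the two elementary facts noted above (degree-$2$ vertices have an odd color iff their two neighbors differ; the multiplicity of $j$ in $N(w)$ is unaffected because we only recolor $P$ and never use $j$ on $v_k$), and (1), (3) are built into the construction. The main obstacle is purely organizational rather than conceptual: keeping the case analysis for $k=2$ clean, since there the color of $v_2$ can be forced, and one must confirm the forced color never collides with $\phi(w)$ or equals $j$; I would lead with that case's sub-analysis to make sure the chosen pair $\phi'(v_1),\phi''(v_1)$ genuinely works before writing up the easier $k=1$ and $k=3$ cases.
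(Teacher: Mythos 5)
Your high-level plan matches the paper's: exhibit $\phi', \phi''$ explicitly, reduce (2) to ``each degree-$2$ vertex $v_\ell$ sees two distinct colors'' plus ``never color $v_k$ with $j=\phi_\mathsf{o}(w)$ so that $w$'s odd count is preserved,'' and the $k=1$ case is exactly what the paper does. But the $k=2$ and $k=3$ constructions as sketched have real gaps.

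For $k=2$: you propose $\phi'(v_1)\in\{i,j\}$, but when $\phi(u)=j$ this forces $\phi'(v_1)=i=\phi(w)$, and then $v_2$'s two neighbors $v_1$ and $w$ both carry $i$, so $v_2$ has no odd color. You list the forbidden colors for $v_2$ (namely $\phi(v_1),i,j$), but the constraint that makes $v_2$ odd lives on $v_1$: one needs $\phi(v_1)\neq\phi(w)=i$, and you never enforce it. The paper's move sidesteps this entirely: let $\{\alpha,\beta\}=[4]\setminus\{i,j\}$ and set $\phi'(v_1)=\alpha,\phi'(v_2)=\beta$ and $\phi''(v_1)=\beta,\phi''(v_2)=\alpha$. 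Since $\phi(u)\in\{i,j\}$, both choices of $v_1$ are proper at $u$ and differ from $i$; both choices of $v_2$ differ from $i$ and $j$.

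For $k=3$: you fix one $v_3\in[4]\setminus\{i,j\}$ for both colorings, vary $v_1$ over the two elements of $[4]\setminus\{\phi(u),j\}$, and then simply assert ``$v_1\neq v_3$.'' This can fail: when $\phi(u)=i$, $[4]\setminus\{\phi(u),j\}=[4]\setminus\{i,j\}$ is precisely the two-element set $v_3$ is drawn from, so for any fixed $v_3$ one of your two colorings has $\phi(v_1)=\phi(v_3)$, killing $v_2$'s odd color. You must split on $\phi(u)$: if $\phi(u)=i$, swap $v_1$ and $v_3$ between the two colorings ($\phi'(v_1)=\alpha,\phi'(v_3)=\beta$ and $\phi''(v_1)=\beta,\phi''(v_3)=\alpha$ with $\{\alpha,\beta\}=[4]\setminus\{i,j\}$); if $\phi(u)\neq i$, your fixed-$v_3$ variant works \emph{provided} you choose $v_3=\phi(u)$, which lies in $[4]\setminus\{i,j\}$ and is avoided by both choices of $v_1$. (As a side remark: the paper's written text for the $3$-thread case only gives the swap, which likewise needs $\phi(u)=i$; the paper's Figure~\ref{figure:flexible-threads} shows the correct construction in both subcases.)
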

\begin{proof}
Start by taking $\phi'$ and $\phi''$ to simply be $\phi$ restricted to $\dom(\phi)\setminus P$.

If $P$ is a $1$-thread, then as $P$ is $(u, \phi)$-flexible, it follows $\phi(u) = \phi_\mathsf{o}(w)$, and thus there exists two colors $\alpha$ and $\beta$ in $[4]\setminus \{\phi(u), \phi(w), \phi_\mathsf{o}(w)\}$. 
Setting $\phi'(v_1) = \alpha$ and $\phi''(v_2) = \beta$ gives the desired colorings. 

If $P$ is a 2-thread, then as $P$ is $(u, \phi)$-flexible, either $\phi(u) = \phi(w)$ or $\phi(u) = \phi_\mathsf{o}(w)$. In both cases, there exists two colors in $[4] \setminus \{\phi(u), \phi(w), \phi_\mathsf{o}(w) \}$, say $\alpha$ and $\beta$. Set $\phi'(v_1) = \alpha$, $\phi'(v_2) = \beta$, and $\phi''(v_1) = \beta$, $\phi''(v_2) = \alpha$. This gives the desired colorings.

If $P$ is a 3-thread, then we define the colorings as follows: set $\phi'(v_2) = \phi''(v_2) = \phi_\mathsf{o}(w)$. Then there exists 2 colors, $\alpha$ and $\beta$, in $[4]\setminus\{\phi(w), \phi_\mathsf{o}(w), \phi(v_2)\}$. Set $\phi'(v_1) = \alpha$, $\phi'(v_3) = \beta$, and set $\phi''(v_1) = \beta$, $\phi''(v_3) = \alpha$. This gives the desired colorings.
\end{proof}

These possibilities are illustrated in \cref{figure:flexible-threads}.

\begin{figure}[tbh!]
\centering

\resizebox{.85\textwidth}{!}{
\input{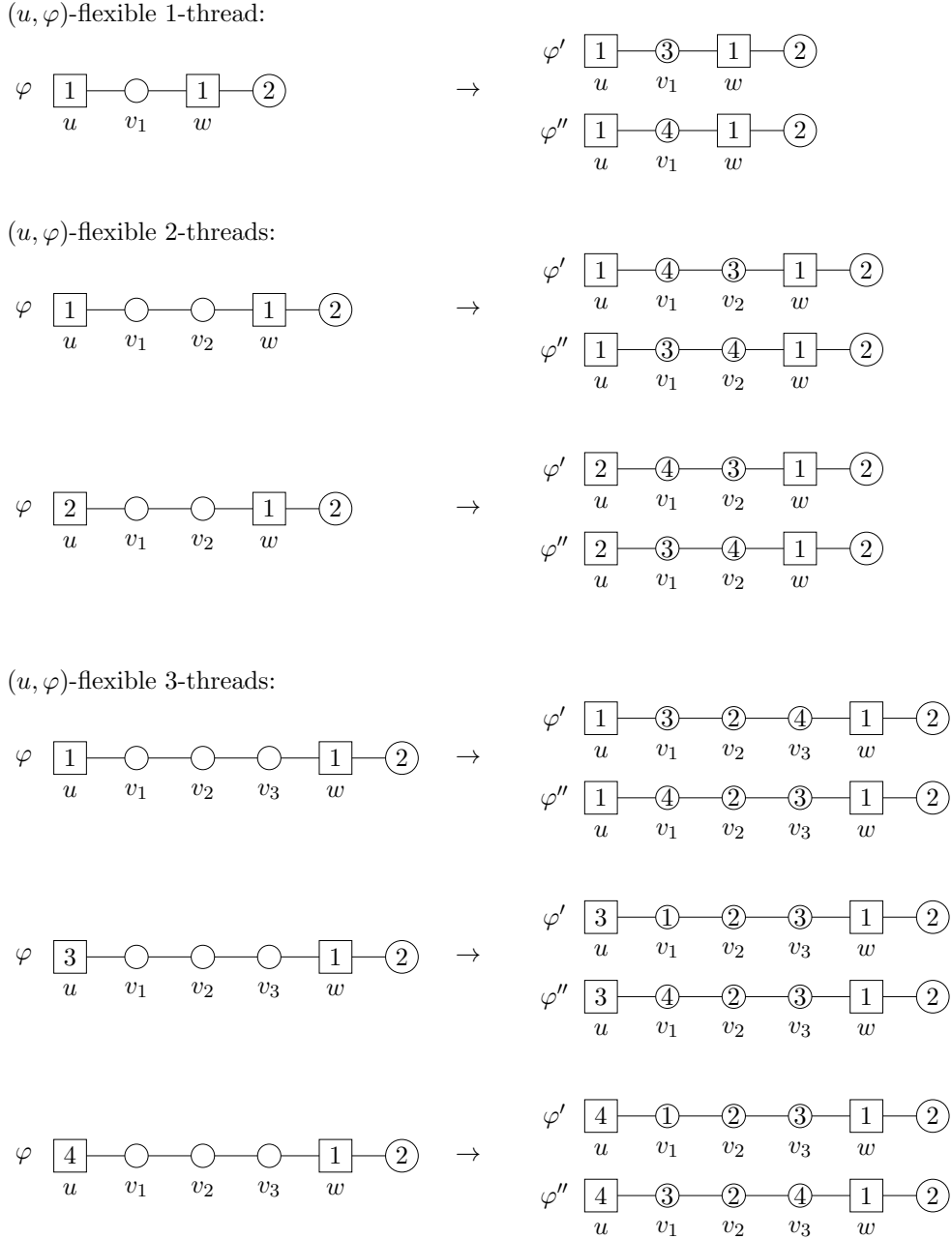}}
    
\caption{Flexible tuples up to relabeling of the colors in $[4]$.}

\label{figure:flexible-threads}
\end{figure}

\begin{definition}
    Let $G$ be a graph, $u \in V(G)$, and $\phi$ be a partial proper $4$-coloring of $G - \{u\}$. Then
    \[\Flex_G(u, \phi) \defeq \{\alpha \in [4] : \text{if $\phi(u)$ is set to  $\alpha$, then $P$ is $(u, \phi)$-flexible for some $P$ anchored by $u$}\}.\]
    When $G$ is clear from context, we will simply write $\Flex(u, \phi)$.
\end{definition}

It is easy to see that if $P$ is $(u,\phi)$-flexible, and $P$ is a $k$-thread, then there are at least $k$ colors in $\Flex(u, \phi)$. This motivates the following definition: 
\begin{definition}\label{def: flex}
 Let $G$ be a graph. The \defn{flexible number} of a vertex $u\in V(G)$ is given by
 \[\flex_G(u) = \max\{k : k\in \{1,2,3\},~u \text{ has a } t_k\text{-neighbor}\}.\]
 When $G$ is clear from context, we will write $\flex(u)$.
\end{definition}

As mentioned above, in our proofs we will be mostly concerned with extending a coloring of $G - S[u]$ to all of $G$, for some vertex $u$. To do so, we must consider the colors which we do not want to color $u$, as these colors would create a conflict. This motivates the following definition:
\begin{definition}
Let $G$ be a graph, $u \in V(G)$, and $\phi$ be a partial proper coloring of $G$. We define the \defn{forbidden color set} of $u$ with respect to $\vp$ as 
 \begin{align*}
  \Forb_G(u, \vp) \defeq &\ \{\vp(x) : x \in N(u) \text{ and $\phi(x)$ is defined}\}\\ \cup &\ \{\oc(x) : \text{$x \in N(u)$, $\deg(x)$  is even, $x$ does not anchor an $(x,\phi)$-flexible thread,}\\
  & \hspace{1cm} \text{and $\phi_{\mathsf{o}}(x)$ is defined.}\} \\
    \cup &\ \{\vp(w) : w \neq u, \text{ $u$ and $w$ anchor a common 1-thread, and $\phi(w)$ is defined} \}.
 \end{align*}
 When $G$ is clear from context we simply write $\Forb(u, \phi)$.
\end{definition}

\begin{lemma}\label{lemma: flex > forb}
Let $G$ be a graph of girth at least 10, $u \in V(G)$, and $\phi$ be an odd 4-coloring of $G -S[u]$. If $|\Flex(u, \phi)| > |\Forb(u, \phi)|$, then there exists an odd $4$-coloring $\phi'$ of $G$ such that every $(u, \phi)$-flexible thread is $(u, \phi')$-flexible.
\end{lemma}
\begin{proof} Let $\vp^0=\vp$.
Since $|\Flex(u, \phi)| > |\Forb(u, \phi)|$, there exists $\alpha \in \Flex(u, \phi)\setminus\Forb(u, \phi)$. First extend $\phi$ by coloring $u$ the color $\alpha$. Let $P$ be the $(u,\vp)$-flexible thread, and let $v$ be the neighbor of $u$ on $P$. 
Next extend $\phi$ by coloring the vertices in $S[u]\setminus V(P)$, one at a time, updating $\phi$ at each step, according to the following rules: for each $x \in \left(N(u) \cap S[u]\right) \setminus \{v\}$,
\begin{itemize}
    \item \textbf{Rule 1:} \textit{$x$ is a $t_1$-neighbor of $u$.} Then $x$ is on a $1$-thread whose anchors are $u$ and $w$ for some $w \in V(G)$. Color $x$ any color in $[4] \setminus \{\alpha, \vp(w), \oc(w)\}$ (or, any color in $[4] \setminus \{\alpha, \vp(w)\}$ if $\oc(w)$ does not exist).
    \item \textbf{Rule 2:} \textit{$x$ is a $t_2$-neighbor of $u$.} Then $x$ is on a $2$-thread, $xy$ with anchors $u \in N(x)$ and $w \in N(y)$ for some $y, w \in V(G)$. Extend $\phi$ by setting $\phi(y)$ to any color in $[4] \setminus \{\alpha, \vp(w), \oc(w)\}$ (or $[4] \setminus \{\alpha, \vp(w)\}$ if $\oc(w)$ does not exist), then color $x$ using any color in $[4] \setminus \{\alpha, \vp(y), \vp(w)\}$.
    \item \textbf{Rule 3:} \textit{$x$ is a $t_3$-neighbor of $u$.} Then $x$ is on a $3$-thread $xyz$ with anchors $u \in N(x)$ and $w \in N(z)$ for some $y, z, w \in V(G)$.
    Extend $\phi$ by setting $\phi(z)$ to any color in $[4] \setminus \{\vp(w), \oc(w)\}$ (or $[4] \setminus \{\vp(w)\}$ if $\oc(w)$ does not exist), and then setting $\phi(y)$ to any color in $[4] \setminus \{\alpha, \vp(z), \vp(w)\}$. Then color $x$ any color in $[4] \setminus \{\alpha, \vp(y), \vp(z)\}$.
    \item \textbf{Rule 4:} \textit{$x$ is a $t_{\Worst}$-neighbor of $u$.} Let $y_1, y_2, w_1, w_2 \in V(G)$ witness $x$ is a $t_{\Worst}$-neighbor where $y_1, y_2 \in N(x)$, $y_1w_1, y_2w_2 \in E(G)$. Color $x$ using any color in $[4] \setminus \{\alpha, \vp(w_1), \vp(w_2)\}$. Then for each $i\in[2]$, color $y_{i}$ using any color in $[4] \setminus \{\vp(x), \vp(w_{i}), \oc(w_{i})\}$ (or $[4] \setminus \{\vp(x), \vp(w_{i})\}$ if $\oc(w_i)$ does not exist).
\end{itemize}
Note that as $P$ is $(u, \vp^0)$-flexible, it is $(u,\vp)$-flexible, and thus there are two extensions of $\vp$ to a proper coloring of $G$ by \cref{lem: two choices}. Note that $v$ has different colors in each coloring by \cref{lem: two choices}\ref{lem: two choices - 3} and so an odd color of $u$ exists under at least one of these extensions by \cref{lemma:three-and-two-choose-odd}. Thus we choose the extension of $\vp$ so that an odd color of $u$ exists. Now all of $G$ is colored, and every vertex has an odd color except possibly for neighbors $w$ of $u$ which anchor $(w, \phi)$-flexible threads. For each such neighbor $w$ of $u$, we choose a $(w, \phi)$-flexible thread $P_w$ and recolor $P_w$ by \cref{lem: two choices} so that an odd color of $w$ exists. Note that recoloring $P_w$ does not change any odd color of vertices other than $w$ since $G$ has girth at least $10$. The result is an odd $4$-coloring $\vp'$ of $G$. Since for each $(u,\vp^0)$-flexible thread $P$, we did not change the color or odd color of the other anchor $w \neq u$ of $P$, it follows any $(u, \vp_0)$-flexible thread of $G$ is also $(u, \phi')$-flexible.
\end{proof}

In order to avoid looking at all possible colorings of $G - S[u]$ and all the possible sets of forbidden colors, we give an upper and lower bound on the size of $\Forb(u,\vp)$ and $\Flex(u,\vp)$.

\begin{definition}\label{def:score}
    Let $G$ be a graph, and $u \in V(G)$ with $v \in N_G(u)$. Then $v$'s \defn{score relative to $u$}, denoted $s_u(v)$, is defined as 
\begin{equation*}
    s_u(v) = \begin{cases}
        0  & \text{if $v$ is a $t_2$-, $t_3$-, or $t_{\Worst}$-neighbor of $u$},\\
        1 & \text{if $v$ is a $t_1$-, $t_{\Good}$-, $t_{\Semibad}$-,  $t_{\Bad}$-, or $t_{\Odd}$-neighbor of $u$},\\
        2 & \text{if $v$ is a $t_{\Even}$-neighbor of $u$}.
    \end{cases}
\end{equation*}

\end{definition}

For simplicity, we write $s(v)$ when $u$ is clear from context.

\begin{definition}\label{def: forb}
Let $G$ be a graph. The \defn{forbidden number} of a vertex $u \in V(G)$ is given by \[\forb_G(u) = \sum_{v\in N_G(u)}s_u(v).\]
When $G$ is clear from context, we write $\forb(G)$.
\end{definition}

It is worth noting that the score of each neighbor of $u$ is precisely the number of colors which possibly contribute to $\Forb(u,\vp)$. Hence \[\forb(u) \geq \max\{|\Forb(u,\vp)|: \vp \text{ is an odd } 4\text{-coloring of } G- S[u]\}.\]

Since each $i$-thread contributes $i$ colors to $\Flex(u,\vp)$, it follows \[\flex(u)\leq \min\{|\Flex(u,\vp)|: \vp \text{ is an odd } 4\text{-coloring of $G-S[u]$}\}.\]

At this point let $G$ be a minimum counterexample (with respect to $|V(G)|$) to \cref{thm:odd10}.
We now prove a number of lemmas about the flexible and forbidden number of vertices of $G$.
\begin{corollary}\label{cor: flex > forb}
    $G$ contains no vertex $u$ with $\flex(u)>\forb(u)$.
\end{corollary}
\begin{proof}
    Suppose for contradiction that there exists $u \in V(G)$ with $\flex(u) > \forb(u)$. Let $\phi$ be an odd 4-coloring of $G - S[u]$. Then
    \[|\Flex(u, \phi)| \geq \flex(u) > \forb(u) \geq |\Forb(u, \phi)|.\] Applying \cref{lemma: flex > forb} shows an odd $4$-coloring of $G$ exists, contradicting that $G$ is a minimum counterexample to Theorem \ref{thm:odd10}.
\end{proof}

\begin{corollary}\label{cor:ti forb}
If $u \in V(G)$ has a $t_i$-neighbor, then $\forb(u) \ge i$ for $i\in \{1,2,3\}$. 
\end{corollary}
\begin{proof}
    Since $u$ has a $t_i$-neighbor, $\flex(u) \ge i$. By \cref{cor: flex > forb}, $\forb(u) \ge \flex(u) \ge i$.
\end{proof}

\begin{lemma}\label{lem:4-4}
Let $u_1$ and $u_2$ be adjacent $4^+$-vertices in $G$. If $u_1$ has a $t_i$-neighbor and $u_2$ has a $t_j$-neighbor where $i,j\in [3]$, then $\forb(u_1) + \forb(u_2) \ge i+j+2$.
\end{lemma}

\begin{proof}
Since $G$ has girth at least 10, it follows $S[u_1]$ and $S[u_2]$ are pairwise disjoint.
Let $P_i$ and $P_j$ be $i$- and $j$-threads anchored by $u_1$ and $u_2$ respectively.
By \cref{cor:ti forb}, $\forb_G(u_1) \ge i$ and $\forb_G(u_2) \ge j$. 
Suppose for contradiction that $\forb_G(u_1)+\forb_G(u_2) \le i+j+1$.
Then, without loss of generality, we may let $\forb_G(u_1) \le i+1$ and $\forb_G(u_2) \le j$.

Let $G_0 \defeq G - (S[u_1] \cup S[u_2])$, and let $G_1 \defeq G - S[u_2]$. Thus, $G_0 = G_1 - S[u_1]$.
Let $\vp^0$ be an odd 4-coloring of $G_0$. 
Note that as $u_2 \notin G_1$, it follows
\[|\Forb_{G_1}(u_1,\vp^0)|\leq \sum_{v\in N_{G_1}(u_1)}s(v) \leq \sum_{v\in N_G(u_1)\setminus\{u_2\}}s(v) = \sum_{v\in N_G(u_1)}s(v)  - 2 = \forb_G(u_1) - 2 \leq i-1.\]

Note that $\flex_{G_1}(u_1) \ge i$, and so  $|\Flex_{G_1}(u_1,\vp^0)|> |\Forb_{G_1}(u_1,\vp^0)|$. By \cref{lemma: flex > forb}, there exists an odd 4-coloring $\vp^1$ of $G_1$ where $P_i$ is $(u,\vp^1)$-flexible (in $G_1$ and thus in $G$).

Now, as $P_i$ is $(u,\vp^1)$-flexible in $G$, it follows $u_1$ contributes at most one color to $\Forb_G(u_2, \phi^1)$, i.e., $\phi^1_\mathsf{o}(u_1)\notin \Forb_G(u_2, \phi^1)$. Thus
\[|\Forb_G(u_2,\vp^1)| \leq \sum_{v\in N_G(u_2)\setminus\{u_1\}}s(v) + 1 \leq \sum_{v\in N_G(u_2)}s(v) - 2 +1 = \forb(u_2) - 2 +1 \leq j-1.\]

Since $\flex_G(u_2) \ge j$, $|\Flex_G(u_2,\vp^1)| > |\Forb_G(u_2,\vp^1)|$. Thus by \cref{lemma: flex > forb}, there exists an odd 4-coloring $\phi_2$ of $G$, contradicting that $G$ is a minimum counterexample.
\end{proof}

\begin{lemma}\label{lem:4-4-4}
Let $uvw$ be a $3$-path of $4^+$-vertices in $G$.
If $u$ has a $t_i$-neighbor, $v$ has a $t_j$-neighbor, and $w$ has a $t_k$-neighbor where $i,j,k \in [3]$,
then $\forb(u) + \forb(v) + \forb(w) \ge i+j+k+4$.
\end{lemma}
  
\begin{proof}

Since $G$ has girth at least 10, it follows $S[u], S[v], S[w]$ are pairwise disjoint. 
By \cref{cor:ti forb}, $\forb(u) \ge i$, $\forb(v) \ge j$, and $\forb(w) \ge k$.
By \cref{lem:4-4}, $\forb(u) + \forb(v) \ge i+j+2$ and $\forb(v)+\forb(w) \ge j+k+2$.
Suppose for contradiction that $\forb(u) + \forb(v) + \forb(w) \le i+j+k+3$.
Then $\forb(u) \le i+1$, $\forb(v) \leq j+3$, and $\forb(w) \le k+1$.
Thus $\forb(u) \in \{i, i+1\}, \forb(w) \in \{k, k+1\}$ so that without loss of generality, there are three cases.

Suppose $\forb(u)  = i$, $\forb(v) \leq j + 3$, and $\forb(w) = k$. Let $G_0 \defeq G-(S[u] \cup S[v] \cup S[w])$, $G_1 \defeq G - (S[u] \cup S[w])$, $G_2 \defeq G - S[w]$. Let $\vp^0$ be an odd $4$-coloring of $G_0$. Since $u, w \notin G_1$, we have
\[|\Forb_{G_1}(v, \vp^0)| \leq \sum_{x \in N_{G_1}} s(x) \leq \sum_{x \in N_G(v) \setminus \{u, w\}} s(x) \leq \forb_G(v) - 4 \leq j-1.\]

Since $\flex_{G_1}(v) \ge j$, it follows $|\Flex_{G_1}(v, \vp^0)| > |\Forb_{G_1}(v, \vp^0)|$. Thus by \cref{lemma: flex > forb}, there exists an odd 4-coloring $\phi^1$ of $G_1$ such that $v$ anchors a $(v, \phi^1)$-flexible thread, say $P$. Note then that $P$ is $(v, \phi^1)$-flexible in $G_2$ as well. 

Thus as $v$ anchors a flexible thread in $G_2$, it follows that $v$ contributes at most one color to $\Forb_{G_2}(u, \vp^1)$. Thus
\[|\Forb_{G_2}(u, \phi^1)| \leq |\Forb_{G_2}(u, \phi^0)| + 1 \leq \sum_{x \in N(u) \setminus \{v\}} s(x) \leq (\forb_G(u) -2) + 1  \leq i - 1. \]
Since $\flex_{G_2}(u) \ge i$, it follows $|\Flex_{G_2}(u, \phi^1)| > |\Forb_{G_2}(u, \phi^1)|$. By \cref{lemma: flex > forb}, there exists an odd 4-coloring $\phi^2$ of $G_2$. Note that $P$ is a $(v, \phi^2)$-flexible thread $G$.

Now as $v$ anchors a $(v, \phi^2)$-flexible thread of $G$, it follows 
\[|\Forb_G(w, \phi^2)| \leq |\Forb_G(w, \vp^0)| + 1\leq \sum_{x \in N(w) \setminus \{v\}} s(x) \leq (\forb_G(w) -2) + 1  \leq k - 1. \]
Since $\flex_G(w) \ge k$, it follows $|\Flex_G(w, \phi^2)| > |\Forb_G(w, \phi^2)|$. Thus by \cref{lemma: flex > forb}, there exists an odd 4-coloring $\phi^3$ of $G$, thereby contradicting that $G$ is a minimum counter example.

The other two cases uses a similar argument, except we change the order in which we color. Namely,
if $\forb_G(u) = i+1, \forb_G(v) \le j+1$, $\forb_G(w) = k+1$, then we take $G_1 \defeq G - (S[w] \cup S[v])$, $G_2 \defeq G - S[v]$.
If $\forb_G(u) = i+1, \forb_G(v) \le j+2$, $\forb_G(w) = k$, we take $G_1 \defeq G - (S[v] \cup S[w]$) and $G_2 \defeq G - S[w]$.
\end{proof}

\begin{lemma}\label{lem:4-bad3-4}
Let $u_1vu_2$ be a $3$-path, where $u_1$ and $u_2$ are $4$-vertices, and $v$ is a semi-bad $3$-vertex in $G$.
If both $u_1$ and $u_2$ have a $t_3$-neighbor, then $\forb(u_1) \ge 3$ and $\forb(u_2) \ge 4$ (or vice-versa). 
\end{lemma}
\begin{proof}
Since $G$ has girth at least $10$, it follows $S[u_1]$, $S[v]$, $S[u_2]$ are pairwise disjoint.
By \cref{cor:ti forb}, $\forb(u_i) \ge 3$ for each $i\in[2]$. 
Suppose for contradiction that $\forb(u_1)=\forb(u_2)=3$.

Let $G_0 \defeq G-(S[{u_1}] \cup S[v] \cup S[{u_2}])$, $G_1 \defeq G - (S[v] \cup S[u_2])$, and $G_2 \defeq G - S[v]$.
Let $\vp^0$ be an odd $4$-coloring of $G_0$.
Since $v \notin G_0$, we have
\[|\Forb_{G_1}(u_1, \vp^0)| \leq \sum_{x \in N_{G_1}(u_1)} s(x) \leq \sum_{x \in N_{G}(u_1) \setminus \{v\}} s(x) \leq \forb_G(u_1) - 1 = 2.\]
Thus $\flex_{G_1}(u_1)\ge 3$, and so $|\Flex_{G_1}(u_1, \vp^0)| > |\Forb_{G_1}(u_1, \vp^0)|$. By \cref{lemma: flex > forb}, there exists an odd 4-coloring $\phi^1$ of $G_1$ such that $u_1$ anchors a $(u, \phi^1)$-flexible thread, say $P$. Note that $P$ is $(u, \phi^1)$-flexible in $G$ as well.

Now as $v \notin G_2$, it follows
\[|\Forb_{G_2}(u_2, \phi^1)| \leq \sum_{x \in N_{G}(u_2) \setminus \{v\}} s(x) \leq \forb_G(u_2) - 1 = 2,\]
and thus since $\flex_{G_2}(u_2)\ge 3$, it follows $|\Flex(u_2, \phi^1)| > |\Forb(u_2, \phi^1)|$. By \cref{lemma: flex > forb}, there exists an odd 4-coloring $\phi^2$ of $G_2$ such that $u_2$ has a $(u_2, \phi^2)$-flexible thread, say $Q$. Note that $Q$ is also $(u_2, \phi^2)$-flexible in $G$.

Let $w$ be the $2$-neighbor of $v$, and $w'$ be the anchor of $w$ not equal to $v$. Extend $\phi^2$ by coloring $v$ an arbitrary color from $[4] \setminus \{\phi^2(u_1), \phi^2(u_2), \phi^2(w')\}.$ Since $w$ isn't colored by $\vp^2$, the resulting coloring is proper. Now color $w$ an arbitrary color from $[4] \setminus \{\phi^2(w'), \phi_\mathsf{o}(w'), \phi^2(v)\}$. The resulting coloring (say $\phi^3$) is proper. At this point, all vertices colored by $\vp^3$ have an odd color except possibly $u_1$ and $u_2$. However, note that $P$ and $Q$ are $(u_1, \phi^3)$ and $(u_2, \phi^3)$-flexible, respectively, in $G$. Thus, if needed, we may recolor $P$ and $Q$ so that $u_1$ and $u_2$ have an odd color. The result is an odd 4-coloring of $G$, a contradiction.
Therefore, it cannot be the case that both $\forb_G(u_1) = 3$ and $\forb_G(u_2) = 3$. It therefore follows either $\forb_G(u_1) \geq 3$ and $\forb_G(u_2) \geq 4$, or vice-versa.
\end{proof}

\begin{figure}[!h]
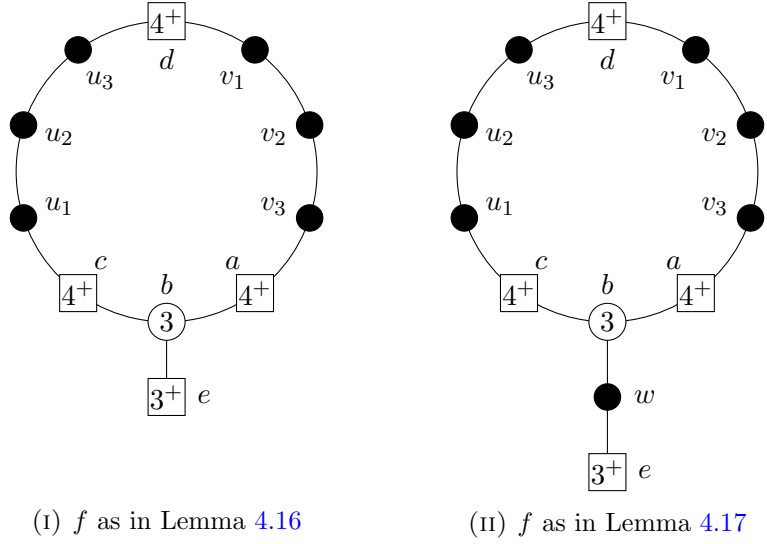

    \centering
    \begin{subfigure}[t]{0.4\textwidth}
        \centering
        \begin{adjustbox}{valign=t}
            \includestandalone{FiguresUsed/bad-face-10.2.b}
        \end{adjustbox}
        \caption{$f$ as in Lemma \ref{obs:a_4a_4a_1a_1good3}}
        \label{fig: a_4 a_4 a_1 a_1 - good3}
    \end{subfigure}
    \hspace{-1cm}
    \begin{subfigure}[t]{0.4\textwidth}
        \centering
        \begin{adjustbox}{valign=t}
            \includestandalone{FiguresUsed/bad-face-10.2.c}
        \end{adjustbox}
        \caption{$f$ as in Lemma \ref{obs:a_4a_4a_1a_1semibad3}}
        \label{fig: a_4 a_4 a_1 a_1 - sbad3}
    \end{subfigure}

    \caption{Different cases of the array representation $a_4 a_4 a_1 a_1$ as considered in Lemmas \ref{obs:a_4a_4a_1a_1good3} and \ref{obs:a_4a_4a_1a_1semibad3}.
     Note that in (II), $e$ is a $3^+$-vertex by  \cref{obs:odd}\ref{obs:odd - no odd vertex adjacent to a 2+ thread}.}
     \label{fig: all-perm a_4 a_4 a_1 a_1}
\end{figure}

\begin{lemma}\label{obs:a_4a_4a_1a_1good3}
Let $f$ be a face of $G$ with array representation $a_4a_4a_1a_1$. Suppose further the boundary of $f$ contains a $3$-vertex, say $b$, with neighbors $a, c$ also on the boundary of $f$. If $\forb(a) = \forb(c) = 3$, then both $a$ and $c$ each have at most one $t_3$-neighbor.
\end{lemma}

\begin{proof}
Let $f$ be a face with array representation $a_4a_4a_1a_1$. Let the vertices on $f$ be $c$, $u_1$, $u_2$, $u_3$, $d$, $v_1$, $v_2$, $v_3$, $a$, $b$ with degree sequence $4^+ - 2 - 2 - 2 -4^+ - 2 - 2 -2 -4^+ - 3$. Let $e$ be the neighbor of $b$ not on the boundary of $f$. See \cref{fig: all-perm a_4 a_4 a_1 a_1}\hyperref[fig: a_4 a_4 a_1 a_1 - good3]{(I)} for an illustration. Now suppose for contradiction that $\forb(a) = \forb(c) = 3$ and both $a$ and $c$ are adjacent to at least two $t_3$-neighbors. 

As $G$ has girth at least $10$, it follows $S[a]$, $S[c]$ are disjoint. Let $G_0 \defeq G - (S[a]\cup S[c]\cup \{b\})$. Let $G_1 \defeq G - (S[c] \cup \{b\})$.
By the minimality of $G$, $G_0$ has an odd $4$-coloring $\vp^0$.

Since $b \notin G_0$, it follows
\[|\Forb_{G_1}(a, \phi^0)| \leq \sum_{x \in N_G(a) \setminus \{b\}} s(x) \leq \forb_G(a) - 1 = 2.\]
Thus since $\flex_{G_1}(a)\ge 3$, it follows $|\Flex_{G_1}(a, \phi^0)| > |\Forb_{G_1}(a,\phi^0)|$, therefore, there exists an odd 4-coloring $\phi^1$ of $G_1$ such that $a$ has a $(a, \phi^1)$-flexible thread by \cref{lemma: flex > forb}; furthermore, as $a$ contains two $t_3$-neighbors in $G$ (and thus in $G_1$), we may assume $P$ is the thread anchored by $a$ that is \textit{not} on the boundary of the face $f$ (we may assume this as this thread contributes 3 colors to $\Flex_{G_1}(a, \phi^0)$). Now let $\alpha \defeq \phi^1(a)$. We extend $\phi^1$ as follows: set $\phi^1(b)$ to be any color $\beta \in [4] \setminus \{\alpha, \vp(e), \oc(e)\}$. 
Now color $c$ any color in $[4]\setminus \Forb_G(c,\vp^2)$. Since $\forb_G(c) = 3$, it follows $|\Forb_G(c,\vp^2)| \leq 3$ and thus the coloring remains proper. Next, by following Rule 1 - Rule 4 in \cref{lemma: flex > forb}, we extend the coloring to include $S[c]\setminus \{u_1, u_2, u_3\}$. At this point, $\dom(\vp^1)=V(G)\setminus\{u_1, u_2, u_3, v_1, v_2, v_3\}$, and every vertex except possibly $c, u_2, v_2, a$ have an odd color. 

If $\phi^2_\mathsf{o}(c)$ exists, then color $u_1$ a color in $[4]\setminus\{\phi^2(c), \phi^2_{\mathsf{o}}(c)\}$; otherwise, color $u_1$ a color not equal to $\phi^2(c)$ and note this is an odd color for $c$. Now color $u_2$ a color in $[4]\setminus\{\phi^2(u_1), \phi^2(c)\}$, and then color $u_3$ a color in $[4]\setminus\{\phi^2(u_2), \phi^2(u_1), \phi^2(d)\}$. Now color $v_1$ a color in $[4]\setminus\{\phi^2(d), \phi^2_\mathsf{o}(d)\}$, and color $v_2$ in $[4]\setminus\{\phi^2(v_1), \phi^2(d)\}$, then color $v_3$ a color in $[4]\setminus\phi^2(v_1), \phi^2(v_2), \phi^2(a)$. At this point, all of $G$ is colored and every vertex except possibly $a$ has an odd color. However, $P$ remains $(a, \phi^2)$-flexible. Thus we may recolor $P$ if needed so that $a$ has an odd-color\footnote{Note that the anchor of $P$ not equal to $a$ is not an isolated vertex in $G_0$. Thus it has an odd color that is not coming from a vertex on $P$ or any thread of $c$. Therefore, even if this anchor was an anchor of a thread of $c$, it remains $(a, \phi^2)$-flexible.}. This shows $G$ is odd 4-colorable, a contradiction. Thus $a$ has at most one $t_3$-neighbor.
\end{proof}

\begin{lemma}\label{obs:a_4a_4a_1a_1semibad3}
Let $f$ be a face of $G$ with array representation $a_4a_4a_1a_1$. Further suppose $f$ contains a semi-bad $3$-vertex, say $b$, with neighbors $a$ and $c$ also on the boundary of $f$. If $\forb(a)+\forb(c)\leq 7$ and $\forb(a)\leq \forb(c)$, then $a$ is adjacent to at most one $t_3$-neighbor.
\end{lemma}

\begin{proof}
Let $f$ be a face with array representation $a_4a_4a_1a_1$. Let the vertices on $f$ be $c$, $u_1$, $u_2$, $u_3$, $d$, $v_1$, $v_2$, $v_3$, $a$, $b$ with degree sequence $4^+ - 2 - 2 - 2 -4^+ - 2 - 2 -2 -4^+ - 3$. Let $w$ be the $2$-neighbor of $b$ not on $f$, and let $e$ be the neighbor of $w$ that is not $b$. See \cref{fig: all-perm a_4 a_4 a_1 a_1}\hyperref[fig: a_4 a_4 a_1 a_1 - sbad3]{(II)} for an illustration. Now suppose for contradiction, $\forb(a) + \forb(c) \leq 7$ and $a$ is adjacent to at least two $t_3$-neighbors. By \cref{lem:4-bad3-4} and the assumptions that $\forb(a) +\forb(c) \le 7$ and $\forb(a) \le \forb(c)$, we obtain $\forb(a)=3$ and $\forb(c)=4$. 

Since $G$ has girth 10, $S[a]$ and $S[c]$ are disjoint.
Let $G_0 \defeq G - (S[a]\cup S[c]\cup \{b, w\})$.
By the minimality of $G$, $G_0$ has an odd $4$-coloring $\vp^0$. Let $G_1 \defeq G_0 \cup S[a]$.

Since $b \notin G_1$ it follows
\[|\Forb_{G_1}(a, \phi^0)| \leq \sum_{x \in N_{G}(a) \setminus \{b\}} s(x) \leq \forb_G(a) - 1 = 2.\]
Thus since $\flex_{G_1}(a)\ge 3$, it follows $|\Flex_{G_1}(a, \phi^0)|> |\Forb_{G_1}(a,\phi^0)|$. Thus by \cref{lemma: flex > forb}, there exists an odd 4-coloring $\phi^1$ of $G_1$ such that $a$ anchors a $(a, \phi^1)$-flexible thread, say $P$; furthermore, as $a$ contains two $t_3$ neighbor in $G$ (and thus in $G_1$), we may assume $P$ is the thread anchored by $a$ that is not on the boundary of the face $f$ (we may assume this as this thread contributes 3 colors to $\Flex_{G_1}(a, \phi^0)$.) We then extend $\vp^1$ as follows: since $b$ is uncolored by $\vp^1$, it follows $b$ contributes at most 1 to $\Forb_{G}(c, \phi^1)$. Thus 
\[|\Forb_{G}(c, \phi^1)| \leq \sum_{x \in N_{G}(c) \setminus \{b\}} s(x) + 1=\sum_{x \in N_{G}(c)} s(x) - 2 + 1= \forb_G(c) - 1 = 3.\]
Thus we may properly color $c$ a color in $ [4]\setminus \Forb_G(c,\vp^1)$. Now by following the coloring rules in \cref{lemma: flex > forb}, we extend $\vp^1$ to $\vp^2$ to include all of $S[c]\setminus \{u_1, u_2, u_3\}$.

Next, set $\vp^2(b)\in [4]\setminus \{\vp^2(c), \vp^2(a), \vp^2(e)\}$, then set $\vp^2(w)\in [4]\setminus \{\vp^1(b), \vp^1(e), \oc^1(e)\}$. At this point the coloring includes $V(G) \setminus \{u_1, u_2, u_3, v_1, v_2, v_3\}$ and every vertex except possibly $\{c, u_2, v_2, a\}$ have an odd color.

The proof then follows exactly as in the last paragraph of \cref{obs:a_4a_4a_1a_1good3}.
\end{proof}

\section{Proof of Theorem~\ref{thm:odd10}}\label{sec:odd10}

We note that we use terminology defined in \S\ref{sec: prelims} and \S\ref{sec: forb/flex}. In this section, we prove the following:
\begin{theorem*}[\ref{thm:odd10}]
    If $G$ is a planar graph with girth at least $10$, then $\odd(G) \le 4$.
\end{theorem*}

\begin{proof}
Suppose for contradiction, the theorem is false. 
Among all counterexamples, choose a plane graph $G$ such that $|V(G)|$ is minimized. Then $G$ is connected and, as mentioned in the introduction, the results of ~\cite{cho2023odd} imply the girth of $G$ is exactly 10.

We let $\mu : V(G) \cup F(G) \to \mathbb{R}$ defined by 
\[
    \mu(x) = \begin{cases}
        2\deg(x) - 6, & x \in V(G),\\
        \ell(x) - 6, & x \in F(G)
    \end{cases}
\]
be the initial assignment of charge to the vertices and faces of $G$. By Euler's formula, \[\sum_{x \in V(G)\cup F(G)}\mu(x) = -12.\] 

We redistribute the charges as follows:

\vspace{1em}
\begin{mdframed}

\emph{Discharging Rules}
\vspace{8pt}
\begin{enumerate}[label = {(V\arabic*)}, itemsep = 5pt]
    \item\label{rule V 3thread}\label{discharging rule start} Each $4^+$-vertex sends charge $\frac{5}{6}$ to its $t_3$-neighbors.
    \item\label{rule V 2thread} Each $4^+$-vertex sends charge $\frac{2}{3}$ to its $t_2$-neighbors.
    \item\label{rule V 1thread} Each $4^+$-vertex sends charge $\frac{1}{3}$ to its $t_1$-neighbors.
    \item\label{rule V path 4-3-2} Each $4^+$-vertex sends charge $\frac{1}{5}$ each to the $2$-neighbors of its $t_{\Worst}$- and $t_{\Bad}$-neighbors.
    \item\label{rule V semibad} Each $4^+$-vertex sends charge $\frac{1}{10}$ to the $2$-neighbors of its $t_{\Semibad}$-neighbors.
\end{enumerate}
\vspace{8pt}
\begin{enumerate}[label = {(F\arabic*)}, itemsep = 5pt]
    \item\label{rule Fgood} Each face sends charge $\frac{2}{3}$ to incident good 2-vertices (recall  Definition \ref{def: good, bad, worst}).
    \item\label{Rule Fbad} Each face sends charge $\frac{11}{15}$ to incident bad 2-vertices.
    \item\label{Rule Fworst} Each face sends charge $\frac{4}{5}$ to incident \worst~ 2-vertices.
    \item\label{Rule F 2thread} Each face sends charge $\frac{2}{3}$ to incident vertices in a $2$-thread.
    \item\label{Rule F 3thread} Each face sends charge $\frac{7}{12}$ to the endpoints of a $3$-thread and charge $1$ to the middle vertex.
    \label{discharging rule finish}
\end{enumerate}
\vspace{8pt}
\begin{enumerate}[label = {(R\arabic*)}]
\item\label{Rule extra} 
After all previous rules are applied, a $4^+$-vertex $v$ divides its remaining charge evenly among its incident poor faces. 
\end{enumerate}
\end{mdframed}

\vspace{1em}

More precisely, let $\mu': V(G) \cup F(G) \to \mathbb{R}$ be the charge obtained by applying \ref{rule V 3thread} through \ref{rule V semibad} and \ref{rule Fgood} through \ref{Rule F 3thread}. We let $p(v)$ be the number of poor faces that $v$ is incident to (we count a face only once, even if $v$ appears multiple times on a boundary walk of $f$.)
Then, if $f$ is poor and incident to $v$, by \ref{Rule extra}, we send charge $\frac{\mu'(v)}{p(v)}$ from $v$ to $f$. Let $\mu'': V(G) \cup F(G) \to \mathbb{R}
$ be the final charge after applying \ref{Rule extra}.

Figure \ref{fig:vertex-discharging} and \ref{fig:odd-face-discharging} below illustrate the vertex and face discharging rules.

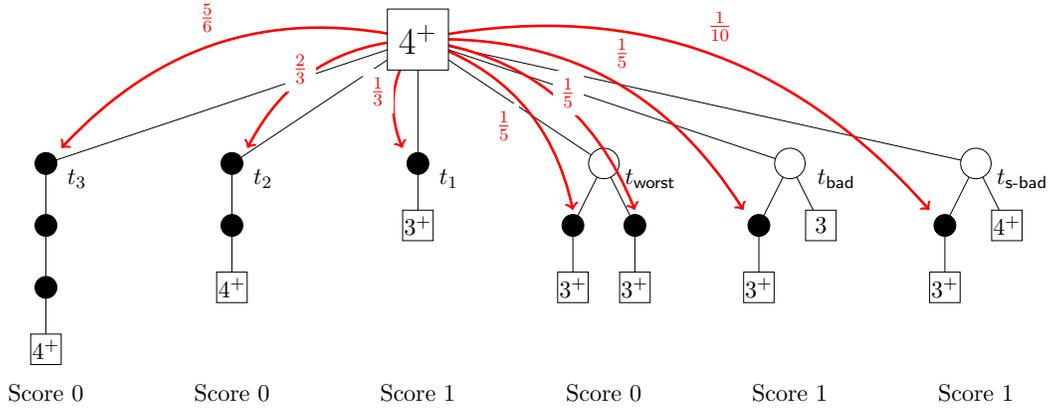
\begin{figure}[tbh!]
    \centering
    \resizebox{.85\textwidth}{!}{\begin{tikzpicture}

\draw (0,0) -- (-6,-2) -- (-6,-5);
\draw (0,0) -- (-3,-2) -- (-3,-4);
\draw (0,0) -- (0,-2) -- (0,-3);
\draw (0,0) -- (6,-2);
\draw (6,-2) -- (5.5,-3) -- (5.5,-4);
\draw (6,-2) -- (6.5,-3);
\draw (0,0) -- (3,-2);
\draw (3,-2) -- (2.5,-3) -- (2.5,-4);
\draw (3,-2) -- (3.5,-3) -- (3.5,-4);
\draw (0,0) -- (9,-2);
\draw (9,-2) -- (8.5,-3) -- (8.5,-4);
\draw (9,-2) -- (9.5,-3);


\draw[fill=black] (-6,-2) circle (5pt);
\draw[fill=black] (-6,-3) circle (5pt);
\draw[fill=black] (-6,-4) circle (5pt);
\draw[fill=white] (-6,-5) +(-7pt, -7pt) rectangle +(7pt, 7pt);
\node at (-6,-5) {$4^+$};
\node at (-5.5,-2.25) {$t_3$};

\draw[fill=black] (-3,-2) circle (5pt);
\draw[fill=black] (-3,-3) circle (5pt);
\draw[fill=white] (-3,-4) +(-7pt, -7pt) rectangle +(7pt, 7pt);
\node at (-3,-4) {$4^+$};
\node at (-2.5,-2.25) {$t_2$};

\draw[fill=black] (0,-2) circle (5pt);
\draw[fill=white] (0,-3) +(-7pt, -7pt) rectangle +(7pt, 7pt);
\node at (0,-3) {$3^+$};
\node at (.5,-2.25) {$t_1$};

\draw[fill=white] (3,-2) circle (7pt);
\draw[fill=black] (2.5,-3) circle (5pt);
\draw[fill=white] (2.5,-4) +(-7pt, -7pt) rectangle +(7pt, 7pt);
\draw[fill=black] (3.5,-3) circle (5pt);
\draw[fill=white] (3.5,-4) +(-7pt, -7pt) rectangle +(7pt, 7pt);
\node at (2.5,-4) {$3^+$};
\node at (3.5,-4) {$3^+$};
\node at (3.75, -2.25) {$t_{\Worst}$};

\draw[fill=white] (6,-2) circle (7pt);
\draw[fill=black] (5.5,-3) circle (5pt);
\draw[fill=white] (5.5,-4) +(-7pt, -7pt) rectangle +(7pt, 7pt);
\draw[fill=white] (6.5,-3) +(-7pt, -7pt) rectangle +(7pt, 7pt);
\node at (5.5,-4) {$3^+$};
\node at (6.5,-3) {$3$};
\node at (6.75, -2.25) {$t_{\Bad}$};

\draw[fill=white] (9,-2) circle (7pt);
\draw[fill=black] (8.5,-3) circle (5pt);
\draw[fill=white] (8.5,-4) +(-7pt, -7pt) rectangle +(7pt, 7pt);
\draw[fill=white] (9.5,-3) +(-7pt, -7pt) rectangle +(7pt, 7pt);
\node at (8.5,-4) {$3^+$};
\node at (9.5,-3) {$4^+$};
\node at (9.75, -2.25) {$t_{\Semibad}$};

\draw[->, red, very thick] (0,0) to[bend right] node [midway, above left, fill=white] {$\frac{5}{6}$} (-6+0.25, -2+0.25);
\draw[->, red, very thick] (0,0) to[bend right] node [midway, left, fill=white] {$\frac{2}{3}$} (-3+0.25, -2+0.25);
\draw[->, red, very thick] (0,0) to[bend right] node [midway, left, fill=white] {$\frac{1}{3}$} (0-0.25, -2+0.25);
\draw[->, red, very thick] (0,0) to[bend left] node [midway, right, fill=white] {$\frac{1}{5}$} (3.5, -3+0.25);
\draw[->, red, very thick] (0,0) to[bend left] node [midway, below left, fill=white] {$\frac{1}{5}$} (2.5, -3+0.25);
\draw[->, red, very thick] (0,0) to[bend left] node [midway, above right, fill=white] {$\frac{1}{5}$} (5.5-0.25, -3+0.25);
\draw[->, red, very thick] (0,0) to[bend left] node [midway, above right, fill=white] {$\frac{1}{10}$} (8.5-0.25, -3+0.25);

\draw[fill=white] (0,0) +(-14pt, -14pt) rectangle +(14pt, 14pt);
\node at (0,0) {\LARGE $4^+$};

\node at (-6,-5.7){Score $0$};
\node at (-3,-5.7){Score $0$};
\node at (0,-5.7){Score $1$};
\node at (3,-5.7){Score $0$};
\node at (6,-5.7){Score $1$};
\node at (9,-5.7){Score $1$};
\end{tikzpicture}}
    \caption{Vertex discharging rules \ref{rule V 3thread} - \ref{rule V semibad} and the associated scores of a neighbor.}
    \label{fig:vertex-discharging}
\end{figure}

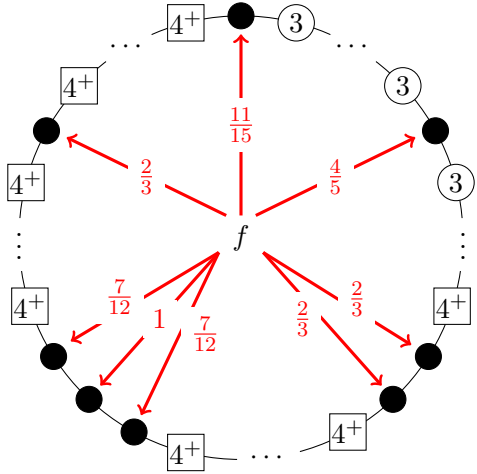
\begin{figure}[tbh!]
    \centering
    \resizebox{.4\textwidth}{!}{\begin{tikzpicture}
\draw (0,0) circle (3);
\draw[fill=white, draw=white] (270+0.2*36:3) circle (12pt);
\draw[fill=white, draw=white] (270+2.45*36:3) circle (10pt);
\draw[fill=white, draw=white] (270+4.15*36:3) circle (10pt);
\draw[fill=white, draw=white] (270+5.85*36:3) circle (10pt);
\draw[fill=white, draw=white] (270+7.55*36:3) circle (10pt);
\draw[fill=white] (270+6.3*36:3) +(-7pt,-7pt) rectangle +(7pt,7pt);
\draw[fill=white] (270+7.1*36:3) +(-7pt,-7pt) rectangle +(7pt,7pt);
\draw[fill=black] (270+6.7*36:3) circle (5pt);

\draw[fill=white] (270+5.4*36:3) +(-7pt,-7pt) rectangle +(7pt,7pt);
\draw[fill=white] (270+4.6*36:3) circle (7pt);
\draw[fill=black] (270+5*36:3) circle (5pt);

\draw[fill=white] (270+3.7*36:3) circle (7pt);
\draw[fill=white] (270+2.9*36:3) circle (7pt);
\draw[fill=black] (270+3.3*36:3) circle (5pt);

\draw[fill=white] (270+2*36:3) +(-7pt,-7pt) rectangle +(7pt,7pt);
\draw[fill=white] (270+0.8*36:3) +(-7pt,-7pt) rectangle +(7pt,7pt);
\draw[fill=black] (270+1.2*36:3) circle (5pt);
\draw[fill=black] (270+1.6*36:3) circle (5pt);

\draw[fill=white] (270+8*36:3) +(-7pt,-7pt) rectangle +(7pt,7pt);
\draw[fill=white] (270+-.4*36:3) +(-7pt,-7pt) rectangle +(7pt,7pt);
\draw[fill=black] (270+8.4*36:3) circle (5pt);
\draw[fill=black] (270+8.8*36:3) circle (5pt);
\draw[fill=black] (270+9.2*36:3) circle (5pt);

\node at (270 + .8*36:3) {$4^+$};
\node at (270 + 2*36:3) {$4^+$};

\node at (270 + 8*36:3) {$4^+$};
\node at (270 + 9.6*36:3) {$4^+$};

\node at (270 + 7.1*36:3) {$4^+$};
\node at (270 + 6.3*36:3) {$4^+$};

\node at (270 + 5.4*36:3) {$4^+$};
\node at (270 + 4.6*36:3) {$3$};

\node at (270 + 2.9*36:3) {$3$};
\node at (270 + 3.7*36:3) {$3$};

\node at (270 + .2*36:3) {$\cdots$};
\node at (270 + 5.85*36:3) {$\cdots$};
\node at (270 + 7.5*36:3) {$\vdots$};
\node at (270 + 2.5*36:3) {$\vdots$};
\node at (270 + 4.15*36:3) {$\cdots$};

\node at (270 + 0*36:0) {$f$};
\draw[->, red, very thick] (0,.3) to node [midway, fill=white] {$\frac{11}{15}$} (0,2.75);
\draw[->, red, very thick] (.2,.3) to node [midway, fill=white] {$\frac{4}{5}$} (2.355,1.355);
\draw[->, red, very thick] (-.2,.3) to node [midway, fill=white] {$\frac{2}{3}$} (-2.355,1.355);
\draw[->, red, very thick] (-.3,-.2) to node [midway, left, fill=white] {$\frac{7}{12}$} (-2.3,-1.4);
\draw[->, red, very thick] (-.3,-.2) to node [midway, fill=white] {$1$} (-1.9,-2);
\draw[->, red, very thick] (-.3,-.2) to node [midway, right, fill=white] {$\frac{7}{12}$} (-1.35,-2.4);

\draw[->, red, very thick] (.3,-.2) to node [midway, left, fill=white] {$\frac{2}{3}$} (1.9,-2);
\draw[->, red, very thick] (.3,-.2) to node [midway, right, fill=white] {$\frac{2}{3}$} (2.3,-1.45);
\end{tikzpicture}}
    \caption{Face discharging rules \ref{rule Fgood} - \ref{Rule F 3thread}.}
    \label{fig:odd-face-discharging}
\end{figure}

In \S\ref{Sec:vertex_final_charge}, we show $\mu''(v) \geq 0$ for each $v \in V(G)$. In \S\ref{Sec:non-poor-face_final_charge}, we show $\mu''(f) \geq 0$ for each rich face $f$ of $G$. Lastly, in \S\ref{Sec:poor-face_final_charge}, we show $\mu''(f) \geq 0$ for each poor face $f$ in $G$. Then as 
\[0 \le \sum_{x \in V(G)\cup F(G)}\mu''(x)=\sum_{x \in V(G)\cup F(G)}\mu(x) = -12,\] 
we obtain a contradiction, completing the proof of \cref{thm:odd10}.
\end{proof}
\subsection{Every vertex has nonnegative final charge}\label{Sec:vertex_final_charge}

\begin{lemma}\label{lem: All vertices have nonnegative final charge}
Each vertex has nonnegative final charge. That is $\mu''(v)\geq 0$ for all $v\in V(G)$.
\end{lemma}
\begin{proof}
We argue each vertex has nonnegative final charge via a series of claims. Let $v \in V(G)$.

\begin{claim}
    If $\deg(v) \geq 6$, then $\mu''(v) \geq 0$.
\end{claim}

\begin{claimproof}
Since $v$ sends at most $\frac{5}{6}$ charge to any neighbor by rule \ref{rule V 3thread}, it follows the remaining charge on $v$ before \ref{Rule extra} is applied is given by $\mu'(v)=(2\deg(v)-6)-(\frac{5}{6}\deg(v)) = \frac{7}{6} \deg(v) - 6 \ge 1 \geq 0$. Thus $\mu''(v) \ge 0$.
\end{claimproof}

\begin{claim}
    If $\deg(v) = 5$, then $\mu''(v) \geq 0$.
\end{claim}
\begin{claimproof}
    Let $N(v) = \{x_1, x_2, x_3, x_4, x_5\}$. By relabeling, we may assume $s(x_i) \leq s(x_j)$ for $i < j$. 
If $v$ contains a $t_3$-neighbor, then without loss of generality let it be $x_1$. By \cref{cor:ti forb}, $\forb(v) \ge 3$. Thus $s(x_5) \geq 1$. Therefore $v$ sends $\frac{5}{6}$ charge to $x_1$, at most $\frac{5}{6}$ charge to $x_2, x_3, x_4$ each by rule \ref{rule V 3thread}, and at most $\frac{1}{3}$ charge to $x_5$ by rule \ref{rule V 1thread}. Thus $v$ sends at most $\frac{22}{6}$ charge to its neighbors, and thus $\mu'(v) \geq (2\deg(v) - 6) -(\frac{22}{6}) \geq 0$. Thus $\mu''(v) \geq 0$.

Now assume $v$ has no $t_3$-neighbors. Then $v$ sends at most $\frac{2}{3}$ charge to each of its neighbors by rule \ref{rule V 2thread}, and thus $\mu'(v) \geq (2\deg(v) - 6) - (\frac{2}{3}\deg(v)) \geq 0$. Therefore $\mu''(v) \geq 0$.
\end{claimproof}

\begin{claim}
    If $\deg(v) = 4$, then $\mu''(v) \geq 0$.
\end{claim}

\begin{claimproof}
Let $N(v) = \{x_1, x_2, x_3, x_4\}$. By relabeling, we may assume $s(x_i) \leq s(x_j)$ for $i < j$. Suppose $v$ has a $t_3$-neighbor; without loss of generality, assume it is $x_1$. Then $v$ sends $\frac{5}{6}$ charge to $x_1$ by rule \ref{rule V 3thread}. 
By \cref{cor:ti forb}, $\forb(v) \ge 3$. It follows that if $s(x_2) = 0$, then $s(x_3) \geq 1$ and $s(x_4) \geq 2$, thus $v$ sends at most $\frac{5}{6}$ charge to $x_2$ by rule \ref{rule V 3thread}, at most $\frac{1}{3}$ charge to $x_3$ by rule \ref{rule V 1thread}, and at most $0$ charge to $x_4$, and thus sends at most $2$ total charge to its neighbors. Thus $\mu'(v)\ge (2(4)-6)-(2)\ge 0$.
On the other hand, if $s(x_2) \geq 1$, then $v$ sends at most $\frac{1}{3}$ charge to each of $x_2, x_3, x_4$ by rule \ref{rule V 1thread}, and thus sends at most $\frac{11}{6}$ charge total. Thus $\mu'(v)\ge (2(4)-6)-(\frac{11}{6})\ge 0$. Thus, in both cases, $\mu'(v) \geq 0$ and so $\mu''(v) \geq 0$.

Now suppose $v$ has no $t_3$-neighbor but has a $t_2$-neighbor; without loss of generality, assume it is $x_1$. Then $v$ sends $\frac{2}{3}$ charge to $x_1$ by rule \ref{rule V 2thread}.
By \cref{cor:ti forb}, $\forb(v) \geq 2$. Thus at most two of $s(x_2), s(x_3), s(x_4)$ can be 0. If $s(x_2) = 0$ and $s(x_3) = 0$, then $s(x_4) = 2$. Thus $v$ sends at most $\frac{2}{3}$ charge to $x_2$ and $x_3$ by rule \ref{rule V 2thread}, and at most 0 charge to $x_4$. Thus $v$ sends at most 2 charge and so, $\mu'(v)\ge (2(4)-6)-(2)\ge 0$. If $s(x_2) = 0$, and $s(x_3) \geq 1$, then $v$ sends at most $\frac{2}{3}$ charge to $x_2$ by rule \ref{rule V 2thread}, and at most $\frac{1}{3}$ charge to $x_3$ and $x_4$ each by rule \ref{rule V 1thread}. Thus $v$ sends at most 2 charge and so, $\mu'(v)\ge (2(4)-6)-(2)\ge 0$. If $s(x_2)\geq 1$, then $v$ sends at most $\frac{1}{3}$ charge to each of $x_2, x_3, x_4$ by rule \ref{rule V 1thread}, and thus sends at most $\frac{5}{3}$ charge. Thus $\mu'(v)\ge (2(4)-6)-(\frac{5}{3})\ge 0$. Therefore, $\mu'(v) \geq 0$ and so $\mu''(v) \geq 0$. 

Now suppose $v$ has no $t_3$- nor $t_2$-neighbor. Then $v$ sends charge at most $\frac{2}{5}$ to each of its neighbors by rule \ref{rule V path 4-3-2}, and thus sends at most $\frac{8}{5}$ charge. Therefore, $\mu'(v)\ge (2(4)-6)-(\frac{8}{5}) \geq 0$ and so $\mu''(v) \geq 0$. 
Thus in all cases, $\mu''(v) \geq 0$.
\end{claimproof}

\begin{claim}
    If $\deg(v) \leq 3$, then $\mu''(v) \geq 0$.
\end{claim}

\begin{claimproof}
    By \cref{obs:odd}\ref{obs:odd - no degree 1 vertex}, there are no vertices of degree 1, and if $\deg(v) = 3$, then $v$ does not participate in any discharging rule. Therefore, $\mu'(v) \geq 0$ and so $\mu''(v) \geq 0$. 
    Thus, we may assume $\deg(v) = 2$. By \cref{obs:odd}\ref{obs:odd - no 4+ thread}, $v$ is on a $k$-thread for $k = 1, 2, 3$. 

First suppose $v$ is on a 1-thread. If $v$ is a good vertex, then by rule \ref{rule V 1thread}, both of $v$'s neighbors send $\frac{1}{3}$ charge to $v$. By \cref{cutvertex}, $v$ is incident to at least 2 faces, and by rule \ref{rule Fgood} each face incident to $v$ sends $\frac{2}{3}$ charge to $v$. Thus $\mu'(v)= (2(2)-6)+(\frac{1}{3}(2)+\frac{2}{3}(2)) = 0$. 
If $v$ is a bad vertex, let $y$ be its $3$-neighbor and $w$ be its $4$-neighbor. By \cref{obs:3vx4nb}, $y$ is incident to a $4^+$-vertex $x$ (note $x \neq w$ as $G$ has girth $10$). If $y$ is a semi-bad $3$-vertex, then $y$ have two $4^+$-neighbors $x$ and $z$, which each of them sends $\frac{1}{10}$ charge to $v$ by rule \ref{rule V semibad}. If $y$ is a bad $3$-vertex, then by rule \ref{rule V path 4-3-2}, $x$ sends $\frac{1}{5}$ charge to $v$. By rule \ref{rule V 1thread}, $w$ sends $\frac{1}{3}$ charge to $v$. By rule \ref{Rule Fbad}, the two faces incident to $v$ each send $\frac{11}{15}$ charge to $v$. Therefore, $\mu'(v)= (2(2)-6)+(\frac{1}{5}+\frac{1}{3}+\frac{11}{15}(2)) = 0$ and so $\mu''(v) \geq 0$. 

If $v$ is a \worst~vertex, let $y_1$ and $y_2$ be its neighbors. By \cref{obs:3vx4nb}, $y_1$ is incident to a $4^+$-vertex $x_1$ and $y_2$ is incident to a $4^+$-vertex $x_2$. Since $g(G) = 10$, all the vertices $y_1, y_2, x_1, x_2$ are distinct. By rule \ref{rule V path 4-3-2}, $x_1$ and $x_2$ each send $\frac{1}{5}$ charge to $v$. By rule \ref{Rule Fworst}, the two faces incident to $v$ each send $\frac{4}{5}$ charge to $v$. Therefore, $\mu'(v)=(2(2)-6)+(\frac{1}{5}(2)+\frac{4}{5}(2)) = 0$ and so $\mu''(v) \geq 0$.

Next suppose $v$ is on a 2-thread. By \cref{obs:odd}\ref{obs:odd - no odd vertex adjacent to a 2+ thread}, $v$ has a $4^+$-neighbor, $x$. By rule \ref{rule V 2thread}, $x$ sends $\frac{2}{3}$ charge to $v$. By rule \ref{Rule F 2thread}, the two faces incident to $v$ each send $\frac{2}{3}$ charge to $v$. Therefore, $\mu'(v)= (2(2)-6)+(\frac{2}{3}+\frac{2}{3}(2)) = 0$ and so $\mu''(v) \geq 0$.

Finally suppose $v$ is on a 3-thread. Suppose $v$ is in a 3-thread, say $v_1v_2v_3$ with anchors $u$ and $w$. By \cref{obs:odd}\ref{obs:odd - no odd vertex adjacent to a 2+ thread}, each of $u$ and $w$ are $4^+$-vertices. If $v = v_2$, then by rule \ref{Rule F 3thread}, the two faces incident to $v$ each send $1$ charge to $v$. Therefore, $\mu'(v)=(2(2)-6)+(1(2)) =0$ and so $\mu''(v) \geq 0$.  If $v = v_1$ or $v = v_3$, then by rule \ref{rule V 3thread}, the anchor adjacent to $v$ sends $\frac{5}{6}$ charge to $v$. By \ref{Rule F 3thread}, the two faces incident to $v$ each send $\frac{7}{12}$ charge to $v$. Therefore, $\mu'(v)=(2(2)-6)+(\frac{5}{6}+\frac{7}{12}(2)) = 0$ and so $\mu''(v) \geq 0$. 
\end{claimproof}

Thus, in all cases, $\mu''(v) \geq 0$. This completes the proof of \cref{lem: All vertices have nonnegative final charge}.
\end{proof}

\subsection{Every rich face has nonnegative final charge}\label{Sec:non-poor-face_final_charge}

Recall by \cref{cor: all faces have array representations}, every boundary of a face of $G$ can be represented by arrays.
For each array in \cref{def:array}, we compute the average charge a face discharges to the vertices represented by the array based on rules \ref{rule Fgood} - \ref{Rule F 3thread}. For example, the array $a_4$ represents the subsequence $4^+ - 2- 2- 2$ of the sequence $4^+ - 2 - 2 - 2 - 4^+$. By discharging rules \ref{discharging rule start} - \ref{discharging rule finish}, a face whose boundary degree sequence contains $a_4$ will distribute charge $0$ to the $4^+$-vertex, $1$ to the middle $2$-vertex, and $\frac{7}{12}$ to the other two $2$-vertices. Thus the average charge received by the vertices represented by $a_4$ from a face is $\frac{13}{24}$. Figure \ref{fig:arrays} summarizes the average charge received per vertex for each array.

\begin{figure}[!ht]
    \centering
    \bgroup
    \def\arraystretch{1.5}
    \begin{tabular}{l|l|c}
       Array  & Degree Sequence & $\substack{\text{Average Charge Received}\\ \text{per Vertex}}$\\
       \hline
        $a_4$ & $\boxed{4^+ - 2 - 2 - 2} - 4^+$ & $\frac{13}{24}$ \\
        $a_3$ & $\boxed{4^+ - 2 - 2} - 4^+$ & $\frac{4}{9}$ \\
        $a_2^{\Worst}$ & $\boxed{3 - 2} - 3$ &$\frac{2}{5}$\\
        $a_2^{\Bad}$ & $\boxed{3 - 2} - 4^+$ or $\boxed{4^+ -2} - 3$ & $\frac{11}{30}$\\
        $a_2^{\Good}$ & $\boxed{4^+ - 2} - 4^+$ & $\frac{1}{3}$\\
        $a_1$ & $\boxed{3^+} - 3^+$ & $0$
    \end{tabular}
    \egroup
    \caption{Arrays in decreasing order of average charge received to its represented vertices by a face.}
    \label{fig:arrays}
\end{figure}

Recall that for each $f \in F(G)$, $\mu'(f)$ is the charge of $f$ after applying \ref{rule Fgood} through \ref{Rule F 3thread}, and $\mu''(f)$ is a charge of $f$ after applying \ref{Rule extra}.
Note that $\mu''(f) \ge \mu'(f)$.

\begin{lemma}\label{lem: all-non-poor-faces nonnegative final charge}
Each rich face has nonnegative final charge. That is $\mu''(f)\geq 0$ for all rich faces $f\in F(G)$.
\end{lemma}
\begin{proof}
Since $\mu''(f) \geq \mu'(f)$, it suffices to show $\mu'(f) \geq 0$.
We argue each rich face has nonnegative final charge via a series of claims. Since $G$ has girth 10, $\ell(f)\geq 10$. We note that the assumption $f$ is rich is only used for faces of length $10$. Recall that the initial charge of a face is $\mu(f) = \ell(f) - 6$. Also recall by \cref{cor: all faces have array representations}, every vertex of $f$ is contained in some array.

\begin{claim}
    If $\ell(f) \geq 14$, $\mu'(f)\geq 0$.
\end{claim}

\begin{claimproof}
Since the average charge received by a vertex represented by an array is at most $\frac{13}{24}$, we have $\mu'(f) \ge (\ell(f) - 6) - (\frac{13}{24}\ell(f)) = \frac{11}{24}\ell(f) - 6 \geq \frac{11}{24}(14) - 6 \geq 0.$
\end{claimproof}

\begin{claim}
    If $\ell(f) = 13$, $\mu'(f)\geq 0$.
\end{claim}

\begin{claimproof}
Suppose $f$ is a $13$-face. Fix an array representation of $f$, and let $x$ denote the number of $a_4$ arrays in this representation. Clearly $0\leq x\leq 3$ and hence at most $12$ of the $13$ vertices incident to $f$ receive an average charge of $\frac{13}{24}$. The other vertices receive an average charge of at most $\frac{4}{9}$.  Thus, $\mu'(f) \geq \mu'(f)\geq (\ell(f) - 6) - (\frac{13}{24}(12) + \frac{4}{9}(1))\geq 0$.
\end{claimproof}

\begin{claim}
    If $\ell(f) = 12$, $\mu'(f)\geq 0$.
\end{claim}

\begin{claimproof}
Suppose $f$ is a $12$-face. Fix an array representation of $f$ and let $x$ and $y$ denote the number of $a_4$ and $a_3$ arrays in the array representation, respectively. Clearly $0\leq x\leq 3$ and $x \neq 3$ by \cref{cor: Greedy faces}. If $x\leq 1$ then $ \mu'(f)\geq (12 - 6) - (\frac{13}{24}(4) + \frac{4}{9}(8))\geq 0$, as desired. Thus, assume $x = 2$. Then $0\leq y\leq 1$. If $y = 0$ then $\mu'(f)\geq (12 - 6) - (\frac{13}{24}(8)+\frac{2}{5}(4))\geq 0$ and if $y = 1$ then $f$ has an array representation which is some permutation of $a_4, a_4, a_3, a_1$, and thus $\mu'(f) = (12 - 6) - (\frac{13}{24}(8)+\frac{4}{9}(3)) \geq 0$. 
\end{claimproof}

\begin{claim}\label{claim: face-11 final charge}
    If $\ell(f)=11, \mu'(f)\geq 0$.
\end{claim}

\begin{claimproof}
 Suppose $f$ is an $11$-face. Fix an array representation of $f$, and let $x$ and $y$ denote the number of $a_4$ and $a_3$ arrays in this representation, respectively. Clearly $0\leq x\leq 2$ and $0\leq y\leq 3$. 

 First suppose $x=0$. Then the average charge received per vertex by a face is at most $\frac{4}{9}$. Thus $\mu'(f) \geq (11 - 6) - (\frac{4}{9}(11)) \geq 0$.

 Next suppose $x = 1$. Then it follows $0\leq y \leq 2$. If $y = 0$ then  $\mu'(f) \geq (11-6) -(\frac{13}{24}(4) + \frac{2}{5}(7)) \geq 0$. If $y = 2$, then the array representation of $f$ is some permutation of $a_4, a_3, a_3, a_1$ and thus, $\mu'(f) = (11- 6)- (\frac{13}{24}(4)+ \frac{4}{9}(6)) \geq 0$. Thus we may assume $y = 1$.
 If the array representation contains $a_2^{\Worst}$, then this is the only $a_2$ it contains, and thus 
 $\mu'(f) \geq (11-6) - (\frac{13}{24}(4) + \frac{4}{9}(3) + \frac{2}{5}(2)) \geq 0$.
 If the array representation does not contain $a_2^{\Worst}$, then it contains at most two copies of the other $a_2$ arrays, and thus:
 $\mu'(f) \geq (11-6) - (\frac{13}{24}(4) + \frac{4}{9}(3) + \frac{11}{30}(4)) \geq 0$.

Finally, suppose $x = 2$. Clearly $0\leq y \leq 1$. If $y = 1$, then the array representation of $f$ is some permutation of $a_4,a_4,a_3$, which cannot occur by \cref{cor: Greedy faces}. Thus, we may assume $y = 0$. 
Then clearly the array representation contains at most one $a_2$.
As an $a_2^{\Worst}$ cannot be adjacent to an $a_4$, it follows the array representation does not contain $a_2^{\Worst}$. If the array representation does not contain $a_2^{\Bad}$, 
then:
$\mu'(f) \geq (11-6) - (\frac{13}{24}(8) + \frac{1}{3}(2)) = 0$. So assume the array representation contains $a_2^{\Bad}$. 
Then the boundary of $f$ can be represented by 
$a_4a_1a_2^{\Bad}a_4$ (recall we may start at an arbitrary vertex and follow an arbitrary orientation). However, having $a_2^{\Bad}a_4$ as a subsequence violates \cref{lem:2badarray_adjacent}, so this cannot occur. This completes the claim. 
\end{claimproof}

\begin{claim}
If $\ell(f) = 10$, and $f$ is rich, then $\mu'(f) \ge 0$.
\end{claim}
\begin{claimproof}
Suppose $f$ is a $10$-face which is rich (recall \cref{def:poor}). Fix an array representation of $f$. Let $x$ and $y$ denote the number of $a_4$ and $a_3$ arrays in this representation, respectively. Clearly $0\leq x\leq 2$ and $0\leq y\leq 3$. 

First suppose $x = 0$. 
If $y = 0$, then $\mu'(f) \geq (10-6) - (\frac{2}{5}(10)) = 0$. 
If $y = 1$, then the array representation of $f$ contains a subsequence of the form $abc$, where $b = a_3$. Note that by degree restrictions, $a, c \neq a_2^{\Worst}$, and by \cref{lem:2badarray_adjacent}, $a,c \neq a_2^{\Bad}$. 
If $a = c = a_2^{\Good}$, then the representation can contain at most one more $a_2$. It follows $\mu'(f) \geq (10-6) -(\frac{4}{9}(3) + \frac{1}{3}(4) + \frac{2}{5}(2)) \geq 0.$ Otherwise, at most one of $a$ or $c$ is equal to $a_2^{\Good}$. Thus at most two more $a_2$ can occur, and it follows $\mu'(f) \geq (10-6) - (\frac{4}{9}(3) + \frac{1}{3}(2) + \frac{2}{5}(4)) \geq 0$.
Therefore we may assume $y \neq 1$ when $x = 0$.
If $y = 2$, then let $a$ and $b$ be two copies of $a_3$ in the array representation of $f$.
Note that by degree restrictions, the array representation does not contain $a_2^{\Worst}$ adjacent to $a$ or $b$, and by \cref{lem:2badarray_adjacent}, the array representation does not contain $a_2^{\Bad}$ adjacent to $a$ or $b$. 
If $a$ and $b$ are not adjacent, then the array representation does not contain any copies of $a_2^{\Worst}$ or $a_2^{\Bad}$; furthermore, it can contain at most two copies of $a_2^{\Good}$, and thus $\mu'(f) \geq (10-6)-(\frac{4}{9}(6) + \frac{1}{3}(4)) \geq 0$. 
If $a$ and $b$ are adjacent, then the representation contains at most one copy of $a_2^{\Worst}$, and by \cref{lem:2badarray_adjacent}, at most one copy of $a_2^{\Bad}$. In either case, it contains at most one $a_2$, and thus $\mu'(f) \geq (10-6)-(\frac{4}{9}(6) + \frac{2}{5}) \geq 0$. 
If the array representation contains no copies of $a_2^{\Worst}$ or $a_2^{\Bad}$ then it can contain at most two copies of $a_2^{\Good}$, and thus $\mu'(f) \geq 0$, as argued a few sentences previously. 
If $y=3$, then the array representation of $f$ contains three $a_3$ and one $a_1$ by degree restrictions, and thus $\mu'(f) \geq (10-6)-(\frac{4}{9}(9)) = 0$.
Thus we may assume it is not the case that $x = 0$.

Therefore we may assume $x=1$. Clearly $0\leq y \leq 2$. If $y = 2$, then the array representation of $f$ is some permutation of $a_4,a_3,a_3$, which cannot occur by \cref{greedylemma}. 
If $y = 1$, then it follows the representation containing at most one $a_2$. By degree restrictions, this $a_2$ is not $a_2^{\Worst}$, and it is not $a_2^{\Bad}$ by \cref{lem:2badarray_adjacent}. Thus, the representation of $f$ is either some permutation of $a_4, a_3, a_2^{\Good}, a_1$, contradicting that $f$ is rich; or $f$ is some permutation of $a_4, a_3, a_1, a_1, a_1$, in which case $\mu'(f)\geq (10-6) - (\frac{13}{24}(4)+\frac{4}{9}(3))\geq 0$. 
Thus we may assume $y \neq 1$. 
So assume $y = 0$.
Since $x = 1$, the array representation contains a subsequence of the form $abc$, where $b = a_4$. Note that by degree requirements, $a, c \neq a_2^{\Worst}$, and by Corollary \ref{lem:2badarray_adjacent}, $a,c \neq a_2^{\Bad}$. 
If $a = c = a_2^{\Good}$, then the representation can contain at most one more $a_2$. However, by degree requirements this $a_2$ cannot be $a_2^{\Worst}$ or $a_2^{\Bad}$. If this $a_2$ were $a_2^{\Good}$, then the array representation would be $a_4a_2^{\Good}a_2^{\Good}a_2^{\Good}$, contradicting \cref{cor: a4a2goodx3 cannot occur}. Thus the representation contains no additional $a_2$, and so $\mu'(f) \geq (10-6) - (\frac{13}{24}(4) + \frac{1}{3}(4)) \geq 0$.
If at most one of $a$ or $c$ is equal to $a_2^{\Good}$, then the other is equal to $a_1$. In addition, at most one more $a_2$ can occur, and so it follows
$\mu'(f) \geq (10-6) - (\frac{13}{24}(4) + \frac{1}{3}(2) + \frac{2}{5}(2)) \geq 0$. Thus we may assume we do not have $x = 1$ and $y = 0$.

So suppose $x = 2$. Then $y = 0$. Let $a$ and $b$ be the two copies of $a_4$. If $a$ and $b$ are not adjacent in the representation, then it follows $f$ can be represented by $a_4a_1a_4a_1$. However, this is a permutation of $a_4, a_4, a_1, a_1$, and thus this contradicts that $f$ is a rich face. Thus we may assume $a$ and $b$ are adjacent in the representation. Note that the representation is not equal to  $a_4a_4a_1a_1$, as this is a permutation of $a_4, a_4, a_1, a_1$, yet $f$ is a rich face. So we may assume $f$ is represented by $a_4a_4a_2$. It follows from degree requirements that $a_2$ is $a_2^{\Good}$.
 However, this cannot occur by \cref{cor: a4a2goodx3 cannot occur}. Thus $x \neq 2$.
This concludes the claim.
\end{claimproof}

Therefore, every rich face $f$ has nonnegative final charge. 
This completes the proof of \cref{lem: all-non-poor-faces nonnegative final charge}. 
\end{proof}

\subsection{Every poor face has nonnegative final charge}\label{Sec:poor-face_final_charge}

Let $v$ be incident to at least one poor face. Recall by \ref{Rule extra}, $v$ sends charge $\frac{\mu'(v)}{p(v)}$ to each poor face $f$ incident to $v$.
For convenience, let $\rr(v) = \frac{\mu'(v)}{p(v)}$.  

\begin{lemma}\label{lem:4vx_forb}
Let $v$ be a $4$-vertex incident to at least one poor face.
\begin{enumerate}[label=(\arabic*), itemsep = 4pt]
    \item\label{4vx_forb4} If $\forb(v) \ge 4$, then $\rr(v) \ge \frac{1}{12}$.
    \item\label{4vx_forb5} If $\forb(v) \ge 5$, then $\rr(v) \ge \frac{1}{6}$.
    \item\label{4vx_forb6} If $\forb(v) \ge 6$, then $\rr(v) \ge \frac{1}{4}$.
\end{enumerate} 
\end{lemma}
\begin{proof}Suppose $v$ is a $4$-vertex. 
Then $p(v) \leq 4$. Let $x_i$ denote the number of neighbors of $v$ with score $i$, where $i \in \{0, 1, 2\}$. Note that $\forb(v) = x_1 + 2x_2$. Note that by the discharging rules \ref{rule V 3thread} and \ref{rule V 1thread}, $\mu'(v) \geq (2(4) - 6) - (\frac{5}{6}(x_0) + \frac{1}{3}(x_1) + 0(x_2))$.

If $\forb(v) \ge 4$, then as $\deg(v) = 4$, it follows $x_0 \leq 2$. If $x_0 = 2$, then $(x_0, x_1, x_2) = (2, 0, 2)$ and thus $\mu'(v) \geq (2(4) - 6) - (\frac{5}{6}(2)) \geq \frac{1}{3}$. Thus $\rr(v) \ge \frac{1}{12}$. 
If $x_0 = 1$, then $(x_0, x_1, x_2) = (1, 2, 1)$ or $(x_0, x_1, x_2) = (1, 0, 3)$. In the first case, $\mu'(v) \geq (2(4) - 6) - (\frac{5}{6}(1) + \frac{1}{3}(2)) \geq \frac{1}{2}$, and in the latter case, $\mu'(v) \geq (2(4) - 6) - (\frac{5}{6}(1)) \geq \frac{1}{2}$ as well. Thus $\rr(v) \geq \frac{1}{12}$.
If $x_0 = 0$, then as $x_1 \leq 4$, it follows: $\mu'(v) \geq (2(4) - 6) - (\frac{1}{3}(4)) \geq \frac{2}{3}$. Thus $\rr(v) \geq \frac{1}{12}$ in this case as well.

If $\forb(v) \ge 5$, then $x_0 \leq 1$. If $x_0 = 1$, then $x_2 \geq 2$, and thus $x_1 \leq 1$. So $\mu'(v) \geq (2(4) - 6) - (\frac{5}{6}(1) + \frac{1}{3}(1))  = \frac{5}{6}$. Thus $\rr(v) \geq \frac{1}{6}$. If $x_0 = 0$, then as $x_1 \leq 3$, it follows  $\mu'(v) \geq (2(4) - 6) - (\frac{1}{3}(3)) = 1$, and $\rr(v) \geq \frac{1}{6}$.

If $\forb(v) \geq 6$, then $x_0 \leq 1$. If $x_0 = 1$, then $x_1 = 0$, and so $\mu'(v) \geq (2(4) - 6) - (\frac{5}{6}(1)) \geq 1$. So $\rr(v) \geq \frac{1}{4}$. If $x_0 = 0$, then $x_1 \leq 2$, and thus $\mu'(v) \geq (2(4) - 6) - (\frac{1}{3}(2)) \geq 1$, so $\rr(v) \geq \frac{1}{4}$.
\end{proof}

\begin{lemma}\label{lem:4vx_forb_no_t3} 
Let $v$ be a $4$-vertex such that $v$ is incident to at least one poor face and $v$ has no $t_3$-neighbors.
\begin{enumerate}[label=(\arabic*), itemsep = 4pt]
    \item\label{4vx_forb3_no_t3} If $\forb(v) \ge 3$, then $\rr(v) \ge \frac{1}{12}$.
    \item\label{4vx_forb4_no_t3} If $\forb(v) \ge 4$, then $\rr(v) \ge \frac{1}{6}$.
    \item\label{4vx_forb5_no_t3} If $\forb(v) \ge 5$, then $\rr(v) \ge \frac{1}{4}$.
    \item\label{4vx_forb6_no_t3} If $\forb(v) \ge 6$, then $\rr(v) \ge \frac{1}{3}$.
\end{enumerate} 
\end{lemma}
\begin{proof}
The proofs follow similarly to the calculations in \cref{lem:4vx_forb}, except now we use the bound by the discharging rules \ref{rule V 2thread} and \ref{rule V 1thread}, $\mu'(v) \geq (2(4) - 6) - (\frac{2}{3}(x_0) + \frac{1}{3}(x_1))$ (i.e., $\frac{2}{3}$ has replaced $\frac{5}{6}$).
\end{proof}

\begin{lemma}\label{lem: all-poor-faces nonnegative final charge}
Each poor face has nonnegative final charge. That is $\mu''(f)\geq 0$ for all poor faces $f\in F(G)$.
\end{lemma}

\begin{proof}
We argue each poor face has nonnegative final charge via a series of claims, namely, looking at the different possibilities of poor faces. By \cref{def:poor}, the array representation of a poor face is a permutation of $a_4,a_4,a_1,a_1$ or $a_4,a_3,a_2^{\Good},a_1$. Thus the boundary of a poor face can be represented by one of the following: \begin{itemize}
    \item $a_4a_4a_1a_1$
    \item $a_4a_1a_4a_1$
    \item $a_4a_3a_2^{\Good}a_1$
    \item $a_4a_3a_1a_2^{\Good}$
    \item $a_4a_2^{\Good}a_3a_1$.
    \end{itemize}
Note then that the boundary of a poor face contains no $3$-vertex with a $2$-neighbor.

 \begin{claim}\label{clm:4141}
If $f$ has array representation $a_4 a_1 a_4 a_1$, then $\mu''(f) \ge 0$.
\end{claim}

\begin{claimproof}
Let $f$ be a face with array representation $a_4a_1a_4a_1$.
By rule \ref{Rule F 3thread}, $\mu'(f) = (10-6) - (\frac{7}{12}(4) + 1(2)) = -\frac{1}{3}$. Let $a,b$ be two adjacent $4^+$-vertices in the boundary of $f$. Note that by \cref{obs:odd}\ref{obs:odd - no odd vertex adjacent to a 2+ thread}, both $a$ and $b$ are even degree vertices.

Suppose at least one of $a$ or $b$ is a $6^+$-vertex; without loss of generality, assume it is $a$.
Since $a$ has at least one $4^+$-neighbor (namely $b$), and since by rule \ref{rule V 3thread} a vertex sends at most $\frac{5}{6}$ charge to one of its neighbors, it follows $\mu'(a) \ge (2k-6) - (\frac{5}{6}(k-1)) = \frac{7}{6}k - \frac{31}{3}$, and thus $\rr(a) \ge (\frac{7}{6}k - \frac{31}{3})\frac{1}{k} > \frac{1}{4}$. Therefore, the pair $a,b$ together sends at least $\frac{1}{4}$ charge to $f$ by rule \ref{Rule extra}, it follows $\mu''(f) \ge \mu'(f) + \frac{1}{4} + \frac{1}{4} \ge (-\frac{1}{3}) + \frac{1}{4} + \frac{1}{4} \ge 0$.

Now suppose both $a$ and $b$ are $4$-vertices.
Since each of $a$ and $b$ has a $t_3$-neighbor, by \cref{cor:ti forb} it follows $\forb(a), \forb(b) \ge 3$.
Suppose $\forb(a) \ge 5$. Then by \cref{lem:4vx_forb}\ref{4vx_forb5}, $\rr(a) \ge \frac{1}{6}$. Similarly, if $\forb(b) \geq 5$ then $\rr(a) \geq \frac{1}{6}$.
If $\forb(a), \forb(b) \le 4$, then by \cref{lem:4-4}, $\forb(a) = \forb(b) = 4$.
Thus, by \cref{lem:4vx_forb}\ref{4vx_forb4}, $\rr(a) \ge \frac{1}{12}$ and $\rr(b) \ge \frac{1}{12}$.
In both cases, $\rr(a) + \rr(b) \ge \frac{1}{6}$.
Since the boundary of $f$ contains two pairs of adjacent even $4^+$-vertices, and each of the pairs sends at least $\frac{1}{6}$ charge to $f$ by rule \ref{Rule extra}, it follows $\mu''(f) \ge \mu'(f) + \frac{1}{6} + \frac{1}{6} \ge (-\frac{1}{3}) + \frac{1}{6} + \frac{1}{6} =0$.
This completes the proof of \cref{clm:4141}.
\end{claimproof}

\begin{claim}\label{clm:4411}
If $f$ has array representation $a_4 a_4 a_1 a_1$, then $\mu''(f) \ge 0$.
\end{claim}

\begin{claimproof}
Let $f$ have array representation $a_4a_4a_1a_1$. Let $abc$ be the path on the boundary of $f$ where $a$ and $c$ are $4^+$-vertices, and $b$ is a $3^+$-vertex (see \cref{fig:a_4a_4a_a_1a_1}).

\begin{figure}[h]
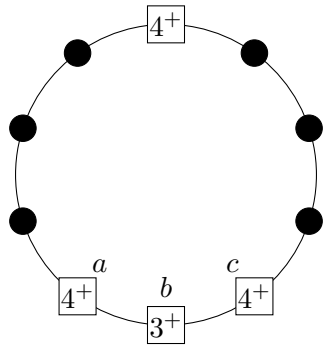

    \centering
            \includestandalone{FiguresUsed/bad-face-10.2.a}
    \caption{The face $f$ considered in \cref{clm:4411}.} 
    \label{fig:a_4a_4a_a_1a_1}
\end{figure}

By \ref{Rule F 3thread}, $\mu'(f) = (10-6) - (\frac{7}{12}(4)+1(2)) = -\frac{1}{3}$.
Note that by rule \ref{rule V 3thread} a vertex sends at most $\frac{5}{6}$ charge to one of its neighbors.
Since $b$ has two $4^+$-neighbors, it follows $\mu'(b) \geq (2\deg(b)-6) - (\frac{5}{6}(\deg(b)-2))$. We have $\eta(b) \geq \mu'(b)/\deg(b) = \frac{1}{6}(7\deg(b) - 26)/\deg(b) \geq \frac{1}{3}$ when $\deg(b) \geq 6$ and thus $\mu''(f) \geq \mu'(v) + \rr(b) \geq 0$. Therefore, we may assume $\deg(b) \leq 5$. Also, since $\deg(b) \geq 4$, it follows  $\rr(b) \geq\frac{1}{3}\cdot\frac{1}{4}= \frac{1}{12}$.
Now suppose $x \in \{a,c\}$ is a $k$-vertex, where $k \ge 6$. Since $x$ is adjacent to at least one $4^+$-neighbor, namely $b$, it follows $\mu'(x) \ge (2k-6) - (\frac{5}{6}(k-1)) = \frac{7}{6}k - \frac{31}{6}$, and thus $\rr(x) \ge (\frac{7}{6}k - \frac{31}{6})\frac{1}{k} \geq \frac{7}{6} - \frac{31}{6k} \geq \frac{11}{36}$. Since $\rr(b) \geq \frac{1}{12}$, it follows that $\mu''(f) \geq \mu'(f)+\rr(x)+\rr(b)\ge (-\frac{1}{3})+\frac{11}{36}+\frac{1}{12}\ge 0$. Therefore, we may assume $\deg(a), \deg(b), \deg(c) \leq 5$. By \cref{obs:odd}\ref{obs:odd - no odd vertex adjacent to a 2+ thread}, $a,c$ are $4$-vertices. We now split into 3 cases based on the value of $\deg(b)$.

\subsubsection{Case 1: $b$ is a $5$-vertex.} 
~Then as $b$ has at least two $4^+$-neighbors (namely $a$ and $c$), it sends charges to at most 3 other vertices. By \cref{obs:odd}\ref{obs:odd - }, there are at most five $2$-vertices contained in threads anchored by $b$. Thus it is easy to see $b$ can send at most $\frac{5}{6} + \frac{2}{3} + \frac{2}{5} = \frac{19}{10}$ charge by rules \ref{rule V 3thread}, \ref{rule V 2thread}, and \ref{rule V path 4-3-2}. Thus $\mu'(b) \ge (2(5)-6) - (\frac{19}{10}) = \frac{21}{10}$ and so $\rr(b) \ge \frac{21}{10}\cdot\frac{1}{5}>\frac{1}{3}$.
Thus, $\mu''(f) \ge \mu'(f) + \rr(b) \ge (-\frac{1}{3})+ \frac{1}{3} =0$ by rule \ref{Rule extra}.

\subsubsection{Case 2: $b$ is a $4$-vertex.}~
Since $b$ has at least two even $4^+$-neighbors $a$ and $c$, $\forb(b) \ge 4$.

Suppose $b$ has a $t_3$-neighbor. Then, as $\deg(b) = 4$, it follows $\forb(b) \le 6$.
Suppose $\forb(b) = 6$.
Then by \cref{lem:4vx_forb}\ref{4vx_forb6}, $\rr(b) \ge \frac{1}{4}$. Since $a$ and $c$ have $t_3$-neighbors, then by \cref{cor:ti forb} it follows $\forb(a), \forb(c) \geq 3$.
It then follows by \cref{lem:4-4-4} that either $\forb(a) \geq 4$ or $\forb(c) \geq 4$; without loss of generality, we assume $\forb(a) \ge 4$. By \cref{lem:4vx_forb}\ref{4vx_forb4}, $\rr(a) \ge \frac{1}{12}$.
Thus, $\mu''(f) \ge \mu'(f) + \rr(a) + \rr(b) \ge (-\frac{1}{3}) + \frac{1}{12} + \frac{1}{4} = 0$ by rule \ref{Rule extra}. Suppose $\forb(b) = 5$.
Then by \cref{lem:4vx_forb}\ref{4vx_forb5}, $\rr(b) \ge \frac{1}{6}$.
If $\forb(a) \ge 5$, then by \cref{lem:4vx_forb}\ref{4vx_forb5}, $\rr(a) \ge \frac{1}{6}$.
Thus, $\mu''(f) \ge \mu'(f) + \rr(a) + \rr(b) \ge (-\frac{1}{3}) + \frac{1}{6} + \frac{1}{6} = 0$ by rule \ref{Rule extra}.
By symmetry, we may assume $\forb(a), \forb(c) \le 4$.
By \cref{lem:4-4-4}, $\forb(a) = \forb(c)=4$.
By \cref{lem:4vx_forb}\ref{4vx_forb4}, $\rr(a) \ge \frac{1}{12}$ and $\rr(c) \ge \frac{1}{12}$.
Thus, $\mu''(f) \ge \mu'(f) + \rr(a) + \rr(b) + \rr(c) \ge (-\frac{1}{3}) + \frac{1}{12} + \frac{1}{6} + \frac{1}{12}= 0$ by rule \ref{Rule extra}.
Suppose $\forb(b) = 4$.
Then by \cref{lem:4vx_forb}\ref{4vx_forb4}, $\rr(b) \ge \frac{1}{12}$.
If $\forb(a) \ge 6$, then by \cref{lem:4vx_forb}\ref{4vx_forb6}, $\rr(a) \ge \frac{1}{4}$.
Thus, $\mu''(f) \ge \mu'(f) + \rr(a) + \rr(b) \ge (-\frac{1}{3}) + \frac{1}{4} + \frac{1}{12} = 0$ by rule \ref{Rule extra}.
By symmetry, we may assume $\forb(a), \forb(c) \le 5$.
By \cref{lem:4-4-4}, without loss of generality, we may assume $\forb(a) = 5$ and $\forb(c) \ge 4$.
By \cref{lem:4vx_forb}\ref{4vx_forb4} and \ref{4vx_forb5}, $\rr(a) \ge \frac{1}{6}$ and $\rr(c) \ge \frac{1}{12}$.
Thus, $\mu''(f) \ge \mu'(f) + \rr(a) + \rr(b) + \rr(c) \ge (-\frac{1}{3}) + \frac{1}{6} + \frac{1}{12} + \frac{1}{12}= 0$ by rule \ref{Rule extra}. By the above arguments, we may assume that $b$ has no $t_3$-neighbor.

Suppose $b$ has a $t_2$-neighbor.
Then, as $\deg(b) = 4$, it follows $\forb(b) \le 6$. Suppose $\forb(b) = 6$. Then by \cref{lem:4vx_forb_no_t3}\ref{4vx_forb6_no_t3}, $\rr(b) \ge \frac{1}{3}$.
Thus, $\mu''(f) \ge \mu'(f) + \rr(b) \ge (-\frac{1}{3}) + \frac{1}{3}= 0$ by rule \ref{Rule extra}.
Suppose $\forb(b) =5$. 
Then by \cref{lem:4vx_forb_no_t3}\ref{4vx_forb5_no_t3}, $\rr(b) \ge \frac{1}{4}$.  
By \cref{lem:4-4-4}, without loss of generality, we may assume $\forb(a) \ge 4$.
By \cref{lem:4vx_forb}\ref{4vx_forb4}, $\rr(a) \ge \frac{1}{12}$.
Thus, $\mu''(f) \ge \mu'(f) + \rr(a) + \rr(b) \ge (-\frac{1}{3}) + \frac{1}{12} + \frac{1}{4}= 0$ by rule \ref{Rule extra}.
Suppose $\forb(b) = 4$.
Then by \cref{lem:4vx_forb_no_t3}\ref{4vx_forb4_no_t3}, $\rr(b) \ge \frac{1}{6}$.
If $\forb(a) \ge 5$, then by \cref{lem:4vx_forb}\ref{4vx_forb5}, $\rr(a) \ge \frac{1}{6}$.
Thus, $\mu''(f) \ge \mu'(f) + \rr(a) + \rr(b) \ge (-\frac{1}{3}) + \frac{1}{6} + \frac{1}{6}= 0$ by rule \ref{Rule extra}.
By symmetry, we may assume $\forb(a), \forb(c) \le 4$.
By \cref{lem:4-4-4}, $\forb(a) = \forb(c) = 4$.
By \cref{lem:4vx_forb}\ref{4vx_forb4}, $\rr(a) \ge \frac{1}{12}$ and $\rr(c) \ge \frac{1}{12}$.
Thus, $\mu''(f) \ge \mu'(f) + \rr(a) + \rr(b) + \rr(c) \ge (-\frac{1}{3}) + \frac{1}{6} + \frac{1}{12} + \frac{1}{12}= 0$ by rule \ref{Rule extra}.
By the above arguments, we may assume that $b$ has no $t_2$- or $t_3$-neighbors.

Suppose $b$ has a $t_{\Worst}$-neighbor. Then as the boundary of a poor face contains no $3$-vertex with a $2$-neighbor, it follows $b$ is incident with at most two poor faces.
Since $b$ has at least two even $4^+$-neighbors $a$ and $c$, and $b$ has no $t_2$- or $t_3$-neighbors, $\mu'(b) \ge (2 (4)-6) - (\frac{2}{5}(2)) = \frac{6}{5}$ by rule \ref{rule V path 4-3-2} and thus $\rr(b) \geq \frac{6}{5}\cdot\frac{1}{2} = \frac{3}{5} > \frac{1}{3}$.
Thus, $\mu''(f) \ge \mu'(f) + \rr(b) \ge (-\frac{1}{3}) + \frac{1}{3}= 0$ by rule \ref{Rule extra}.

Thus we may assume that $b$ has no $t_3$-, $t_2$-, or $t_{\Worst}$-neighbors.
Then, $\mu'(b) \ge (2(4)-6) - (\frac{1}{3}(2)) = \frac{4}{3}$ and $\rr(b) \ge \frac{4}{3}\cdot \frac{1}{4} \geq \frac{1}{3}$.
Thus, $\mu''(f) \ge \mu'(f) + \rr(b) \ge (-\frac{1}{3}) + \frac{1}{3}= 0$ by rule \ref{Rule extra}.

\subsubsection{Case 3: $b$ is a $3$-vertex.} ~Since $b$ has two $4^+$-neighbors, it follows $b$ is either a good $3$-vertex or a semi-bad $3$-vertex.

Subcase 3.1: Suppose $b$ is a good $3$-vertex. Since $a$ and $c$ have $t_3$-neighbors, then by \cref{cor:ti forb} it follows $\forb(a), \forb(c) \geq 3$.
Recall before we split into cases we assumed $a$ and $c$ were $4$-vertices.
Since $\forb(a) \ge 3$, and it has a $t_{\Good}$-neighbor (namely $b$) and a $t_3$-neighbor, it follows the most charge $a$ gives is $\mu'(a) \ge (2(4)-6) - (\frac{5}{6}(2)) = \frac{1}{3}$ by rule \ref{rule V 3thread} and thus $\rr(a) \ge\frac{1}{3}\cdot\frac{1}{4}=\frac{1}{12}$.
By symmetry, $\rr(c) \ge \frac{1}{12}$.

We now show that if $a$ is adjacent to at most one $t_3$-neighbor, then $a$ is incident to at most 3 poor faces. Indeed, as $f$ is a 10-face and $g(G) \geq 10$, we may assume $f$ contains no cycles in its interior. Since \cref{obs:odd} implies $G$ has no $1$-vertices, it follows the interior of $f$ contains no vertices or edges of $G$. Thus the other two neighbors of $a$ are not in the interior of $f$. Let $e$ be the neighbor of $b$ that's not $a$ or $c$. Let $f'$ be the face containing $e, b, a$; we show $f'$ is not poor. 

Suppose it were. Note that $\deg(e) \geq 3$ as we are in the case when $b$ is good. Since the neighbors of $a$ are not in the interior of $f$, there exists a neighbor of $a$, say $x$, such that $x$ is incident to $f'$; furthermore, $ebax$ is a path on the boundary of $f'$ and has degree sequence $3^+ - 3 - 4 - \deg(x)$. But as $f'$ is poor, this degree sequence is only possible if $x$ is the endpoint of a $3$-thread, contradicting that $a$ has at most one $3$-thread. Thus $f'$ is not poor, and so $a$ is incident to at most $3$ poor faces. By symmetry, the same is true of $c$. 

If both $\forb(a) = \forb(c) = 3$, then by \cref{obs:a_4a_4a_1a_1good3}, both $a$ and $c$ are adjacent to at most one $t_3$-neighbor, and as discussed above both are incident to at most 3 poor faces. Since both $a$ and $c$ are adjacent to at most one $t_3$-neighbor and at least one $t_{\Good}$-neighbor (namely $b$), it follows for $x \in \{a, c\}$ that $x$ has two other neighbors $y$ and $z$ with $s_x(y)=s_x(z)=1$ or $s_x(y)=2$ and $s_x(z)=0$. Thus $\mu'(x)\ge (2(4)-6)-(\frac{5}{6}+\frac{2}{3})=\frac{1}{2}$ by the discharging rules.

As $a$ and $c$ are incident with at most 3 poor faces, it follows $\rr(a), \rr(c) \ge \frac{1}{2}\cdot \frac{1}{3}=\frac{1}{6}$.
Thus, $\mu''(f) \ge \mu'(f) + \rr(a) +\rr(c) \ge (-\frac{1}{3}) + \frac{1}{6} + \frac{1}{6}= 0$ by rule \ref{Rule extra}. 

So assume $\forb(a) \geq 4$ (the case of $\forb(c) \geq 4$ is symmetric). Then as $a$ has a $t_3$- and $t_{\Good}$-neighbor, which have scores $0$ and $1$ respectively, it follows $a$ has a neighbor $x$ with $s_a(x) = 2$ and a neighbor $y$ with $s_a(y) \geq 1$, and thus contains at most one $t_3$-neighbor. Therefore, $a$ is incident to at most $3$ poor faces. 
Since $s_a(x) = 2$ and $s_a(y) \geq 1$, it also follows $\mu'(a) \ge (2 (4)-6) - (\frac{5}{6}+\frac{1}{3}) = \frac{5}{6}$. Thus $\rr(a) \ge\frac{5}{6}\cdot \frac{1}{3}=\frac{5}{18} > \frac{1}{4}$. Recall at the start of Subcase 3.1 we showed $\eta(c) \geq \frac{1}{12}$.
Thus, $\mu''(f) \ge \mu'(f) + \rr(a) +\rr(c) \ge (-\frac{1}{3}) + \frac{1}{4} + \frac{1}{12}= 0$ by rule \ref{Rule extra}. This completes subcase 3.1.

Subcase 3.2:
Suppose $b$ is a semi-bad $3$-vertex.
By \cref{lem:4-bad3-4}, without loss of generality, we may assume $\forb(a) \ge 3$ and $\forb(c) \ge 4$.
Note that no poor face has a degree sequence containing $4^+ - 3 - 2$. 
Thus, as $b$ is a semi-bad $3$-vertex, this implies that $f$ is the only poor face that is incident with $b$. It follows $a$ and $c$ are incident with at most $3$ poor faces.

Suppose  $\forb(x) \geq 4$ for each $x \in \{a, c\}$. Then as $x$ is incident to at least one $t_3$-neighbor (namely the 3-thread along the boundary of $f$), and a $t_{\Semibad}$-neighbor (namely $b$), it follows $x$ has two neighbors $w$ and $y$ with $s(w)  = 2$ and $s(y) \geq 1$. Therefore $\mu'(x) \geq (2(4) - 6) - (\frac{5}{6} + \frac{1}{3} + \frac{1}{10}) = \frac{11}{15}$. Since $x$ is incident to at most $3$ poor faces, it follows $\eta(x) \geq \frac{11}{45}$. Thus if $\forb(x) \geq 4$ for both $x \in \{a, c\}$, then it follows
$\mu''(f) \geq \mu'(f) + \eta(a) + \eta(c) \geq (-\frac{1}{3}) + \frac{11}{45} + \frac{11}{45} \geq 0$. Therefore, as $\forb(c) \geq 4$, we may assume $\forb(a) = 3$.

If $\forb(c) = 4$, then as $\forb(a) =3$, it follows by \cref{obs:a_4a_4a_1a_1semibad3} that $a$ is adjacent to at most one $t_3$-neighbor. Also since $\forb(a) = 3$, $a$ has two other neighbors $x$ and $y$ with $s_a(x)=s_a(y)=1$ or $s_a(x)=2$, $s_a(y)=0$. Thus, in both cases, $\mu'(a)\ge (2(4)-6)-(\frac{5}{6}+\frac{2}{3}+\frac{1}{10})=\frac{2}{5}$. Thus as $a$ is incident to at most $3$ poor faces, it follows $\rr(a) \ge \frac{2}{5}\cdot\frac{1}{3}=\frac{2}{15}$.
As $\forb(c) = 4$ and $c$ has a $t_3$- and $t_{\Semibad}$-neighbor, it follows $c$ has two other neighbors $x$ and $y$ with $s_c(x) = 2$ and $s_c(y) \geq 1$, and thus $\mu'(c) \ge (2 (4) -6) - (\frac{5}{6} + \frac{1}{3} + \frac{1}{10}) = \frac{11}{15}$ and so $\rr(c) \ge \frac{11}{15}\cdot\frac{1}{3}=\frac{11}{45}$.
Thus, $\mu''(f) \ge \mu'(f) + \rr(a) + \rr(c)\ge (-\frac{1}{3}) + \frac{2}{15} + \frac{11}{45} \geq 0$ by rule \ref{Rule extra}.

If $\forb(c) \ge 5$, then it has two neighbors $x$ and $y$ with $s_c(x) = 2$ and $s_c(y) = 2$. Thus $\mu'(c) \ge (2 (4) - 6) - (\frac{5}{6} + \frac{1}{10}) = \frac{16}{15} > 1$ and $\rr(c) \ge \frac{1}{3}$.
Thus, $\mu''(f) \ge \mu'(f) + \rr(c) \ge (-\frac{1}{3}) + \frac{1}{3}= 0$ by rule \ref{Rule extra}.
This completes subcase 3.2.

Therefore, we see in all cases, $\mu''(f) \geq 0$. This completes the proof of \cref{clm:4411}.
\end{claimproof}

\begin{claim}\label{clm:4321}
If $f$ has an array representation that is a permutation of $a_4a_3a_2^{\Good}a_1$, then $\mu''(f) \ge 0$.
\end{claim}
\begin{claimproof}
Let $f$ be a poor face with an array representation that is a permutation of $a_4a_3a_2^{\Good}a_1$.
Then by symmetry, $f$ can be represented by one of the following:

\begin{itemize}
    \item $a_4a_3a_2^{\Good}a_1$
    \item $a_4a_3a_1a_2^{\Good}$
    \item $a_4a_2^{\Good}a_3a_1$.
\end{itemize}
See \cref{fig: a_4 a_3 a_2^good a_1}.

\begin{figure}[!htb]
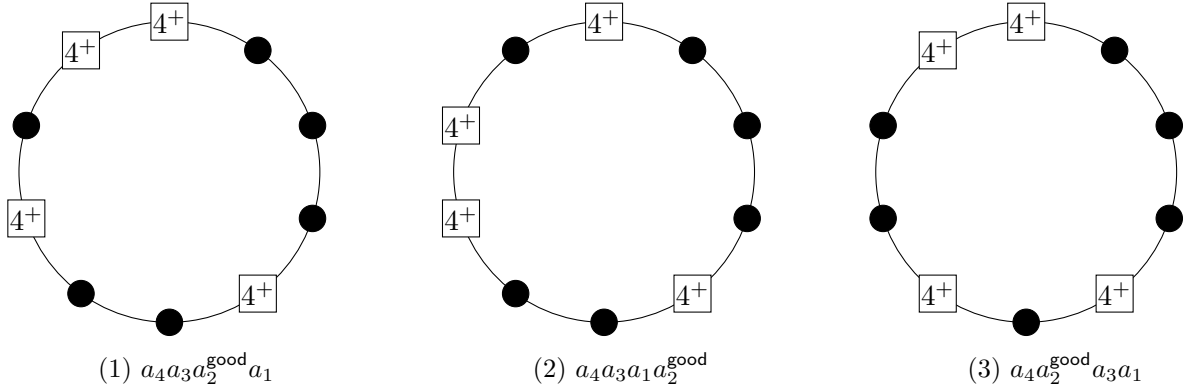

\centering
\begin{subfigure}[t]{0.3\textwidth}
\includestandalone{FiguresUsed/bad-face-10.5-unlabeled}
\caption*{(1) $a_4a_3a_2^{\Good}a_1$}
\end{subfigure}
\hfill
\begin{subfigure}[t]{0.3\textwidth}
\includestandalone{FiguresUsed/bad-face-10.5-2-unlabeled}
\caption*{(2) $a_4a_3a_1a_2^{\Good}$}
\end{subfigure}
\hfill
\begin{subfigure}[t]{0.3\textwidth}
\includestandalone{FiguresUsed/bad-face-10.5-3-unlabeled}
\caption*{(3) $a_4a_2^{\Good}a_3a_1$}
\end{subfigure}
\caption{Faces considered in \cref{clm:4321} with corresponding array representations starting from the top vertex and proceeding clockwise.}
\label{fig: a_4 a_3 a_2^good a_1}
\end{figure}

By rules \ref{rule Fgood}, \ref{Rule F 2thread}, and \ref{Rule F 3thread}, $\mu'(f) = (10-6) - (\frac{7}{12}(2) + 1 + \frac{2}{3}(3)) = - \frac{1}{6}$.

By \cref{obs:odd}\ref{obs:odd - no odd vertex adjacent to a 2+ thread} and \cref{Obs:1ThreadNextToOdd}, there exists two adjacent even $4^+$-vertices, say $a$ and $b$, incident with $f$.
Suppose $a$ is a $k$-vertex, where $k \ge 6$. Since $b$ receives no charge from $a$, it follows $\mu'(a) \ge (2k-6) - (\frac{5}{6}(k-1)) = \frac{7}{6}k-\frac{31}{6}$ and thus $\rr(a) \ge (\frac{7}{6}k-\frac{31}{6})\frac{1}{k} = \frac{7}{6}-\frac{31}{6k} > \frac{1}{6}$.
Thus, $\mu''(f) \ge \mu'(f) + \rr(a) \ge (-\frac{1}{6}) + \frac{1}{6}= 0$ by rule \ref{Rule extra}.
By symmetry, we may assume that both $a$ and $b$ are $4$-vertices.

Suppose $a$ has no $t_2$- or $t_3$-neighbors.
Then $\mu'(a) \ge (2 (4) - 6) - (\frac{2}{5}(3)) = \frac{4}{5}$ and $\rr(a) \ge\frac{4}{5}\cdot \frac{1}{4}= \frac{1}{5} > \frac{1}{6}$.
Thus, $\mu''(f) \ge \mu'(f) + \rr(a) \ge (-\frac{1}{6}) + \frac{1}{6}= 0$ by rule \ref{Rule extra}.
By symmetry, we may assume that both $a$ and $b$ have at least one $t_2$- or $t_3$-neighbor.

Suppose $\forb(a) \ge 5$.
Then by \cref{lem:4vx_forb}\ref{4vx_forb5}, $\rr(a) \ge \frac{1}{6}$.
Thus, $\mu''(f) \ge \mu'(f) + \rr(a) \ge (-\frac{1}{6}) + \frac{1}{6}= 0$ by rule \ref{Rule extra}.
By symmetry, we may assume that $\forb(a), \forb(b) \le 4$.

Suppose both $a$ and $b$ have a $t_3$-neighbor.
By \cref{lem:4-4}, $\forb(a) + \forb(b) \ge 8$, and thus $\forb(a) = \forb(b) = 4$.
By \cref{lem:4vx_forb}\ref{4vx_forb4}, $\rr(a) \ge \frac{1}{12}$ and $\rr(b) \ge \frac{1}{12}$.
Thus, $\mu''(f) \ge \mu'(f) + \rr(a) + \rr(b) \ge (-\frac{1}{6}) + \frac{1}{12} + \frac{1}{12}= 0$ by rule \ref{Rule extra}.

So suppose exactly one in $\{a,b\}$ has a $t_3$-neighbor.
Without loss of generality, assume it is $a$. 
Note that $b$ has a $t_2$-neighbor by our previous assumption.
If $\forb(b) = 4$, then by \cref{lem:4vx_forb_no_t3}\ref{4vx_forb4_no_t3}, $\rr(b) \ge \frac{1}{6}$ and $\mu''(f) \ge \mu'(f) + \rr(b) \ge (-\frac{1}{6}) + \frac{1}{6}= 0$.
Thus, we may assume that $\forb(b) \le 3$ by rule \ref{Rule extra}.
Since $a$ has a $t_3$-neighbor and $b$ has a $t_2$-neighbor, then by \cref{lem:4-4}, $\forb(a) + \forb(b) \ge 7$, and thus  $\forb(a) =4$ and $\forb(b)=3$.
By \cref{lem:4vx_forb}\ref{4vx_forb4} and \cref{lem:4vx_forb_no_t3}\ref{4vx_forb3_no_t3}, $\rr(a) \ge \frac{1}{12}$ and $\rr(b) \ge \frac{1}{12}$.
Thus, $\mu''(f) \ge \mu'(f) + \rr(a) + \rr(b) \ge (-\frac{1}{6}) + \frac{1}{12} + \frac{1}{12}= 0$ by rule \ref{Rule extra}.

For the last case, assume both $a$ and $b$ have no $t_3$-neighbor.
Note that both $a$ and $b$ have a $t_2$-neighbor by our previous assumption.
If $\forb(a) = 4$, then by \cref{lem:4vx_forb_no_t3}\ref{4vx_forb4_no_t3}, $\rr(a) \ge \frac{1}{6}$ and $\mu''(f) \ge \mu'(f) + \rr(a) \ge (-\frac{1}{6}) + \frac{1}{6}= 0$.
By symmetry, $\forb(a), \forb(b) \le 3$ by rule \ref{Rule extra}.
By \cref{lem:4-4}, $\forb(a) + \forb(b) \ge 6$, and we obtain $\forb(a) =\forb(b)=3$.
By \cref{lem:4vx_forb_no_t3}\ref{4vx_forb3_no_t3}, $\rr(a) \ge \frac{1}{12}$ and $\rr(b) \ge \frac{1}{12}$.
Thus, $\mu''(f) \ge \mu'(f) + \rr(a) + \rr(b) \ge (-\frac{1}{6}) + \frac{1}{12} + \frac{1}{12}= 0$ by rule \ref{Rule extra}.

This completes the proof of all the cases in \cref{clm:4321}. 
\end{claimproof}

Therefore, every poor face $f \in F(G)$ has nonnegative final charge, that is $\mu''(f)\geq 0$ for all poor faces $f\in F(G)$.
This completes the proof of \cref{lem: all-poor-faces nonnegative final charge}.
\end{proof}

\section{Proof of Theorem~\ref{thm:pcf11}}\label{sec:pcf11}

We note that we use terminology from \S\ref{sec: prelims} and
\S\ref{sec: forb/flex} in this section. 
Before proving the theorem, we need a few observations. Suppose \cref{thm:pcf11} is false; among all counterexamples to \cref{thm:pcf11}, let $G$ be one with $|V(G)|$ minimized.  

The observation below follows from ~\cite[Lemma 2.1, Lemma 2.3]{cho2022proper}. While these lemmas are for a graph $G$ that is a minimal counterexample to the statement that all planar graphs with girth at least 12 have $\chi_{\PCF}(G) \leq 4$, the proof of these lemmas do not use the girth condition. Thus the statements are true for a minimal counterexample $G$ to \cref{thm:pcf11} as well.

\begin{observation}\label{obs:lem}
The following hold for $G$, where $n_{k^+}(v)$ denotes the number of $k^+$-neighbors of $v$:
\begin{enumerate}[label = {(\arabic*)}]
    \item\label[observation]{obs:lem1} $G$ has no $1$-vertex.
    \item\label[observation]{obs:lem2} $G$ has no $3$-vertex anchoring a $2^+$-thread.
    \item \label[observation]{obs:lem3} $G$ has no $4^+$-thread.
    \item\label[observation]{obs:lem4} If a $4$-vertex $v$ has a $t_3$-neighbor, then $v$ anchors at most $1+n_{3^+}(v)$ many $2^+$-threads. Thus if $v$ is a $4$-vertex with a $t_3$-neighbor, then the total number of $t_3$ and $t_2$-neighbors of $v$ is at most $2$.
    \item\label[observation]{obs:lem5} If a $5$-vertex $v$ anchors a $3$-thread, then $v$ anchors at most $4+n_{3^+}(v)$ many $2^+$-threads. 
\end{enumerate}
\end{observation}

As $G$ is connected, every face has a boundary. The following corollary shows the boundary can always be written in terms of arrays (see \cref{def:array}).
\begin{corollary}\label{lem: all faces have array representations}
The boundary of every face of $G$ has an array representation.
\end{corollary}
\begin{proof}
By \cref{obs:lem}\ref{obs:lem1}, the degree sequence of any face boundary never contains $1$. 
By \ref{obs:lem}\ref{obs:lem3}, the degree sequence of any face boundary does not contain the subsequence $2 -2 -2 -2$. 
By \cref{obs:lem}\ref{obs:lem2}, the degree sequence of any face boundary does not contain the subsequence $3 - 2 -2$. 
Therefore, the only subsequences the degree sequence of a face boundary can contain are exactly the arrays as defined in \cref{def:array}. Thus the degree sequence of every face boundary can be written as a concatenation of arrays. 
\end{proof}

We also show a certain structure does not appear in $G$ by using the forb-flex method of \S \ref{sec: forb/flex}, namely by slightly modifying the proof of \cref{lemma: flex > forb}.

\begin{observation}\label{obs: pcf-reduce}
$G$ contains no $4$-vertex with three $t_2$-neighbors and one $t_1$-neighbor.
\end{observation}
\begin{proof}
    Suppose for contradiction $u\in V(G)$ has three $t_2$-neighbors and one $t_1$-neighbor. Let $\vp$ be a PCF $4$-coloring of $G-S[u]$. Then $\vp$ is also an odd $4$-coloring of $G - S[u]$. Recall by \cref{def: forb} and \cref{def: flex}, $\forb(u) = 1$ and $\flex(u) = 2$. Since $\flex(u)>\forb(u)$, it follows by \cref{cor: flex > forb} that $\phi$ can be extended to be an odd 4-coloring of all of $G$. However, by slightly changing the coloring rules used in \cref{lemma: flex > forb}, we claim that $\phi$ can actually be extended to a PCF coloring of all of $G$. Indeed, let $\phi_{*}(v)$ be the unique color in the neighborhood of $v$, should it exist. Then by replacing $\phi_{\mathsf{o}}(w)$ with $\phi_{*}(w)$ in the rules outlined \cref{lemma: flex > forb}, it follows $\phi$ is extended to an odd coloring of $G$ such that every vertex in $G - S[u]$ contains a unique color in its neighborhood. Furthermore, by our assumption, $S[u]$ contains only $2$- or $4$-vertices. As $\phi$ is an odd coloring for $G$, it follows by \cref{lem:odd-and-pcf} that each vertex in $S[u]$ contains a unique color in its neighborhood. Thus $\phi$ is indeed a PCF coloring of $G$, completing the proof.
\end{proof}

Finally, we require a definition:

\begin{definition}
Let $v\in V(G)$ be on a $2$-thread. We say $v$ is \defn{supported} if its adjacent anchor has a $t_1$- or $t_3$-neighbor. Otherwise, we say $v$ is \defn{unsupported}.
\end{definition}

See \cref{fig:supported}.

\begin{figure}[tbh!]
    \centering
    \resizebox{.3\textwidth}{!}{\begin{tikzpicture}


\draw (1.5,0) -- (-2.25,0);
\draw (3,-1) -- (-2.25,-1);

\draw[fill = white] (0,0) + (-7pt, -7pt) rectangle + (7pt, 7pt);
\draw[fill = black] (-.75, 0) circle (5pt);
\draw[fill = black] (-1.5, 0) circle (5pt);
\draw[fill=white] (-2.25,0) + (-7pt, -7pt) rectangle + (7pt, 7pt);
\draw[fill=black] (.75, 0) circle (5pt);
\draw[fill=white] (1.5,0) + (-7pt, -7pt) rectangle + (7pt, 7pt);

\draw[fill = white] (0,-1) + (-7pt, -7pt) rectangle + (7pt, 7pt);
\draw[fill = black] (-.75, -1) circle (5pt);
\draw[fill = black] (-1.5, -1) circle (5pt);
\draw[fill=white] (-2.25,-1) + (-7pt, -7pt) rectangle + (7pt, 7pt);
\draw[fill=black] (.75, -1) circle (5pt);
\draw[fill=black] (1.5, -1) circle (5pt);
\draw[fill=black] (2.25, -1) circle (5pt);
\draw[fill=white] (3,-1) + (-7pt, -7pt) rectangle + (7pt, 7pt);

\node at (-.75, .35) {$v$};
\node at (-.75, -.65) {$v$};
\node at (-2.25, 0) {$4^+$};
\node at (0, 0) {$4^+$};
\node at (1.5, 0) {$3^+$};
\node at (3, -1) {$4^+$};
\node at (-2.25, -1) {$4^+$};
\node at (0, -1) {$4^+$};
\node at (0, -1.25) {};

\end{tikzpicture}}
    \caption{$v$ supported.}
    \label{fig:supported}
\end{figure}
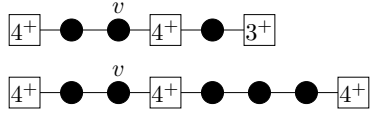

We are now ready to prove \cref{thm:pcf11}, which we restate below for convenience.

\begin{theorem*}[\ref{thm:pcf11}]
    If $G$ is a planar graph with girth at least $11$, then $\pcf(G) \le 4$.
\end{theorem*}

\begin{proof}
Suppose for contradiction, the theorem is false. 
Among all counterexamples, choose a plane graph $G$ such that $|V(G)|$ is minimized. Then $G$ is connected and, as mentioned in the introduction, the results of ~\cite{cho2022proper} imply the girth of $G$ is exactly 11.

Let $G$ be drawn in the plane without edge-crossings. Let $F(G)$ be the set of faces of $G$. Define $\mu : V(G) \cup F(G) \to \mathbb{R}$ by 
\[
    \mu(x) = \begin{cases}
        2\deg(x) - 6, & x \in V(G),\\
        \ell(x) - 6, & x \in F(G).
    \end{cases}
\]
By Euler's formula, \[\sum_{x \in V(G)\cup F(G)}\mu(x) = -12.\] 

We redistribute the charges as follows:

\vspace{1em}
\begin{mdframed}

\emph{Discharging Rules}
\vspace{8pt}
\begin{enumerate}[label = {(V\arabic*)}, itemsep = 5pt]
    \item\label{pcf:rule:vertex1} Each $4^+$-vertex sends charge $1$ to its $t_3$-neighbors.
    \item\label{pcf:rule:vertex2}
    Each $4^+$-vertex sends charge $1$ to its supported $t_2$-neighbors.
    \item\label{pcf:rule:vertex3} Each $4^+$-vertex sends charge $\frac{1}{2}$ to its unsupported $t_2$-neighbors.
\end{enumerate}
\vspace{8pt}
\begin{enumerate}[label = {(F\arabic*)}, itemsep = 5pt]
   \item\label{pcf:rule:face1} Each face sends charge $1$ to its incident $2$-vertices that are on a $1$-thread.
\item\label{pcf:rule:face2} Each face sends charge $\frac{1}{2}$ to its incident supported $2$-vertices, and $\frac{3}{4}$ to its incident unsupported $2$-vertices. 
\item\label{pcf:rule:face3} Each face sends charge $\frac{1}{2}$ to incident endpoints of a $3$-thread and charge 1 to the middle point.
\end{enumerate}

\end{mdframed}

\vspace{1em}

\begin{figure}[!h]
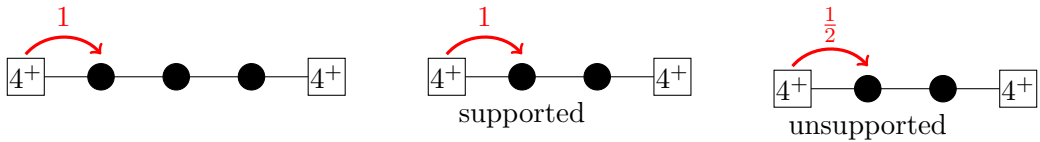

\centering
\begin{adjustbox}{valign=t}
\includestandalone{FiguresUsed/pcf-vertex-discharging-rules-1a}
\end{adjustbox}
\qquad
\begin{adjustbox}{valign=t}
\includestandalone{FiguresUsed/pcf-vertex-discharging-rules-1b}
\end{adjustbox}
\qquad
\begin{adjustbox}{valign=t}
\includestandalone{FiguresUsed/pcf-vertex-discharging-rules-2}
\end{adjustbox}
\caption{Vertex discharging rules \ref{pcf:rule:vertex1}-\ref{pcf:rule:vertex3}.}\label{figure:pcf-vertex-discharge-rules}
\end{figure}

\begin{figure}[tbh!]
    \centering
    \resizebox{.4\textwidth}{!}{\begin{tikzpicture}
\draw (0,0) circle (3);
\draw[fill=white, draw=white] (270+0*36:3) circle (12pt);
\draw[fill=white, draw=white] (270+6.5*36:3) circle (10pt);
\draw[fill=white, draw=white] (270+3.5*36:3) circle (10pt);

\draw[fill=white] (270+5.4*36:3) +(-7pt,-7pt) rectangle +(7pt,7pt);
\draw[fill=white] (270+4.6*36:3) +(-7pt,-7pt) rectangle +(7pt,7pt);
\draw[fill=black] (270+5*36:3) circle (5pt);


\draw[fill=white] (270+2.4*36:3) +(-7pt,-7pt) rectangle +(7pt,7pt);
\draw[fill=white] (270+1.2*36:3) +(-7pt,-7pt) rectangle +(7pt,7pt);
\draw[fill=black] (270+1.6*36:3) circle (5pt);
\draw[fill=black] (270+2*36:3) circle (5pt);

\draw[fill=white] (270+7.5*36:3) +(-7pt,-7pt) rectangle +(7pt,7pt);
\draw[fill=white] (270+-.9*36:3) +(-7pt,-7pt) rectangle +(7pt,7pt);
\draw[fill=black] (270+7.9*36:3) circle (5pt);
\draw[fill=black] (270+8.3*36:3) circle (5pt);
\draw[fill=black] (270+8.7*36:3) circle (5pt);

\node at (270 + 1.2*36:3) {$4^+$};
\node at (270 + 2.4*36:3) {$4^+$};

\node at (270 + 7.5*36:3) {$4^+$};
\node at (270 + 9.1*36:3) {$4^+$};


\node at (270 + 5.4*36:3) {$3^+$};
\node at (270 + 4.6*36:3) {$3^+$};


\node at (270 + 0*36:3) {$\cdots$};
\node at (270 + 6.5*36:3) {$\cdots$};
\node at (270 + 3.5*36:3) {$\cdots$};

\node at (270 + 0*36:0) {$f$};
\draw[->, red, very thick] (0,.3) to node [midway, fill=white] {$1$} (0,2.75);
\draw[->, red, very thick] (-.3,-.2) to node [midway, left, fill=white] {$\frac{1}{2}$} (-2.65,-.7);
\draw[->, red, very thick] (-.3,-.2) to node [midway, fill=white] {$1$} (-2.4,-1.35);
\draw[->, red, very thick] (-.3,-.2) to node [midway, right, fill=white] {$\frac{1}{2}$} (-2,-1.9);

\draw[->, red, very thick] (.3,-.2) to node [midway, left, fill=white] {$\frac{1}{2}$} (2.3,-1.55);
\draw[->, red, very thick] (.3,-.2) to node [midway, right, fill=white] {$\frac{3}{4}$} (2.6,-.9);

\node[font=\scriptsize] at (1,-1.45) {supported};
\node[font=\scriptsize] at (1.65,0) {unsupported};
\end{tikzpicture}}
    \caption{Face discharging rules \ref{pcf:rule:face1} - \ref{pcf:rule:face3}.}
    \label{fig:pcf-face-discharging}
\end{figure}

Let $\mu'$ be the final charge after the discharging rules are applied. In \S \ref{subsection: vertex charge}, we show $\mu'(v) \geq 0$ for all $v \in V(G)$. In \S \ref{subsection: face charge}, we show $\mu'(f) \geq 0$ for all $f \in F(G)$. Then as
\[0 \le \sum_{x \in V(G)\cup F(G)}\mu'(x)=\sum_{x \in V(G)\cup F(G)}\mu(x) = -12,\] 
we obtain a contradiction, completing the proof of \cref{thm:pcf11}.
\end{proof}

\subsection{Every vertex has nonnegative final charge}\label{subsection: vertex charge}

\begin{lemma}\label{lem: pcf-vertices-final-charge}
Each vertex has nonnegative final charge. That is, $\mu'(v) \ge 0$ for all $v \in V(G)$. 
\end{lemma}

\begin{proof}
We argue each vertex has nonnegative final charge via a series of claims. Let $v \in V(G)$. Let $x_2$ and $x_3$ be the number of $t_2$- and $t_3$-neighbors of $v$, respectively. Note these are the only neighbors that receive a charge from $v$.
\begin{claim}
    If $\deg(v) \geq 6$, then $\mu'(v) \geq 0$.
\end{claim}
\begin{claimproof}
    As $v$ sends charge at most 1 to any of its neighbors, it follows $\mu'(v) = (2\deg(v) - 6) - (\deg(v)) = \deg(v) - 6 \geq 0$.
\end{claimproof}

\begin{claim}
    If $\deg(v) = 5$, then $\mu'(v) \geq 0$.
\end{claim}
\begin{claimproof}
    First Suppose $x_3 \geq 1$. Then by \cref{obs:lem}\ref{obs:lem5}, $x_2 + x_3 \leq 4 + n_{3^+}(v)$, and it follows by degree restrictions that $x_2 + x_3 \leq 4$. Since $v$ at most charge 1 to any neighbor, it follows $\mu'(v) \geq (2(5) - 6) - (4(1)) = 0$ by rules \ref{pcf:rule:vertex1} and \ref{pcf:rule:vertex2}.

    Now suppose $x_3 = 0$. If $x_2 \leq 4$, then as $v$ sends at most 1 to each by rule \ref{pcf:rule:vertex2}, it follows $\mu'(v) \geq (2(5) - 6) - (4(1)) \geq 0$. So we may assume $x_2 = 5$. Then $v$ has no $t_1$- or $t_3$-neighbor, and thus none of $v$'s $t_2$-neighbors are supported. Therefore, $v$ sends at most $\frac{1}{2}$ to each of its neighbors by rule \ref{pcf:rule:vertex3}, and thus $\mu'(v) \geq (2(5) - 6) - (\frac{1}{2}(5)) \geq 0$.
\end{claimproof}

\begin{claim}
    If $\deg(v) = 4$, then $\mu'(v) \geq 0$.
\end{claim}
\begin{claimproof}
    First suppose $x_3 \geq 1$. By \cref{obs:lem}\ref{obs:lem4}, $x_2 + x_3 \leq 2$. As $v$ sends at most 1 charge to each $t_2$- or $t_3$-neighbor by rules \ref{pcf:rule:vertex1} and \ref{pcf:rule:vertex2}, it follows $\mu'(v) \leq (2(4) - 6) - (1(2)) = 0$.

    Now suppose $x_3 = 0$. If $x_2 = 4$, then all of $v$'s neighbors are unsupported, and thus $v$ gives charge at most $\frac{1}{2}$ to each by rule \ref{pcf:rule:vertex3}. Therefore, $\mu'(v) \geq (2(4) - 6) - (\frac{1}{2}(4)) = 0$. If $x_2 \leq 2$, then as $v$ sends charge at most 1 to each of these neighbors by rule \ref{pcf:rule:vertex2}, it follows $\mu'(v) \geq (2(4) - 6) - (1(2)) = 0$. Thus assume $x_2 = 3$. If none of these three $2$-neighbors are supported, then $v$ sends at most $\frac{1}{2}$ to each by rule \ref{pcf:rule:vertex3}, and thus $\mu'(v) \geq (2(4) - 6) - (\frac{1}{2}(3)) \geq 0$. Therefore, we may assume at least one of $v$'s $t_2$-neighbors is supported which means $v$ must also have a $t_1$-neighbor. However, this is not possible by \cref{obs: pcf-reduce}.
    \end{claimproof}

\begin{claim}
    If $\deg(v) \leq 3$, then $\mu'(v) \geq 0$.
\end{claim}
\begin{claimproof}
    By \cref{obs:lem}\ref{obs:lem1}, $\deg(v) \geq 2$. If $\deg(v) = 3$, then $v$ doesn't participate in any discharging rules, and thus its charge is $\mu'(v) = 2(3) - 6 = 0$. So we assume $\deg(v) = 2$. Then we argue that $v$ cannot be a cut-vertex. To see this, consider the proof of \cref{cutvertex} also works in the setting of PCF coloring. Therefore, $v$ is incident to two different faces.

    Suppose $v$ is on a $1$-thread. As it receives charge $\frac{1}{2}$ from each of its incident faces by rule \ref{pcf:rule:face1}, $\mu'(v) \geq (2(2) - 6) + (\frac{1}{2}(2)) \geq 0$.

    Suppose $v$ is on a $2$-thread. If $v$ is an unsupported vertex, it receives $\frac{3}{4}$ from each of its incident faces by rule \ref{pcf:rule:face2}, and $\frac{1}{2}$ from its neighboring anchor by rule \ref{pcf:rule:vertex3}. Thus $\mu'(v) \geq (2(2) -6) + (\frac{3}{4}(2) + \frac{1}{2}(1)) = 0$. If $v$ is supported, then it receives $\frac{1}{2}$ from each face by rule \ref{pcf:rule:face2}, and $1$ from its neighboring anchor by rule \ref{pcf:rule:vertex2}, and thus $\mu'(v) \geq (2(2)-6) + (\frac{1}{2}(2) + 1(1)) = 0$.

    Suppose $v$ is on a $3$-thread. If $v$ is the middle vertex, then it receives 1 from each face by rule \ref{pcf:rule:face3}, and so $\mu'(v) \geq (2(2) - 6) + (1(2)) = 0$. If $v$ is an endpoint, it receives $\frac{1}{2}$ from each face by rule \ref{pcf:rule:face3} and $1$ from its neighboring anchor by rule \ref{pcf:rule:vertex1}, and thus $\mu'(v) \geq (2(2) - 6) + (\frac{1}{2}(2) + 1(1)) = 0$.

Since there are no $4^+$-threads (\cref{obs:lem}\ref{obs:lem3}), this covers all cases and completes the claim.
\end{claimproof}
Thus, in all cases, $\mu'(v)\geq 0$. This completes the proof of \cref{lem: pcf-vertices-final-charge}.
\end{proof}

\subsection{Every face has nonnegative final charge}\label{subsection: face charge}

In the following proof, we consider the average charge a face gives to each vertex in an array based on rules \ref{pcf:rule:face1} - \ref{pcf:rule:face3}. Note $a_3$ receives a different average charge depending on the number of supported 2-vertices ($\frac{1}{2}$ if both are unsupported, $\frac{5}{12}$ if exactly one is supported, and $\frac{1}{3}$ if both are supported). See \cref{fig:arrays-pcf}.

\begin{figure}[!ht]
    \centering
    \bgroup
    \def\arraystretch{1.5}
    \begin{tabular}{l|l|c}
       Array  & Degree Sequence & $\substack{\text{Average Charge Received}\\ \text{per Vertex}}$\\
       \hline
        $a_4$ & $\boxed{4^+ - 2 - 2 - 2} - 4^+$ & $\frac{1}{2}$ \\
        $a_3$ & $\boxed{4^+ - 2 - 2} - 4^+$ & $\frac{1}{2}$, $\frac{5}{12}$, or $\frac{1}{3}$ \\
        $a_2$ & $\boxed{3^+ - 2} - 3^+$ & $\frac{1}{2}$\\
        $a_1$ & $\boxed{3^+} - 3^+$ & $0$
    \end{tabular}
    \egroup
    \caption{Average charge received per vertex from a face.}
    \label{fig:arrays-pcf}
\end{figure}

\begin{lemma}\label{lem: pcf-faces-final-charge}
Each face has nonnegative final charge. That is $\mu'(f) \ge 0$ for all $f \in F(G)$.    
\end{lemma}
\begin{proof}

We argue each face has nonnegative final charge via a series of claims. Since $G$ has girth 11, $\ell(f)\geq 11$. Recall that the initial charge of a face is $\mu(f) = \ell(f) - 6$. Also recall by \cref{lem: all faces have array representations}, every vertex of $f$ is contained in some array.

\begin{claim}
    If $\ell(f)\geq 12, \mu'(f)\geq 0$.
\end{claim}

\begin{claimproof}
    Suppose $f$ is a $12^+$-face. Since the maximum average charge given to a vertex is $\frac{1}{2}$, we have $\mu'(f) \geq (\ell(f) - 6) - (\frac{1}{2}\ell(f))\geq 0$.
\end{claimproof}

\begin{claim}
    If $\ell(f) = 11, \mu'(f)\geq 0$.
\end{claim}

\begin{claimproof}
    Suppose $f$ is an $11$-face. Fix an array representation of $f$.
    Let $x_i$ denote the number of $a_i$ arrays in an array representation of $f$ for $i\in[4]$, and let $s$ denote the number of supported vertices incident to $f$.

    Suppose $s\geq 2$. If two supported vertices are adjacent, then $f$ contains an $a_3$, which receives an average of $\frac{1}{3}$ per vertex. Since all the other arrays of $f$'s boundary receive at most $\frac{1}{2}$ average charge, it follows $\mu'(f) \geq (11 - 6) - (\frac{1}{2}(8) + \frac{1}{3}(3)) = 0$. If two supported vertices are not adjacent, then $f$ contains at least two $a_3$ arrays which receive an average charge of at most $\frac{5}{12}$. Thus $\mu'(f) \geq (11 - 6) - (\frac{1}{2}(5) + \frac{5}{12}(6)) = 0$. Therefore we may assume $f$ contains at most one supported vertex, i.e., $s\leq 1$.

    Suppose $x_1 \geq 1$. Since $a_1$ receives an average charge of $0$, and the other arrays on $f$'s boundary receive an average charge of at most $\frac{1}{2}$, it follows $\mu'(f)\geq (11-6) - (\frac{1}{2}(7)+\frac{5}{12}(3))\geq 0$. So we assume $x_1 = 0$. We now split into cases based on the value of $x_4$. Note $x_4\leq 2$ since $\ell(f)=11$.

    \emph{Case 1: $x_4 = 2$.} If the array representation of $f$ contains $a_4$ twice, but no $a_1$, then the array representation of $f$ must be a permutation of $a_4,a_4,a_3$. However, in this case, both $2$-vertices represented by the $a_3$ are supported since both are neighbors to anchors of $3$-threads. This contradicts the assumption that $s\leq 1$. Thus $x_4\neq 2$.

    \emph{Case 2: $x_4 = 1$.} Since $\ell(f) = 11$ and the array representation of $f$ contains one $a_4$, but no $a_1$, by parity arguments, $x_3 = 1$. However, if $x_3 = 1$, then the array representation of $f$ is some permutation of $a_4,a_3,a_2,a_2$, and regardless of the permutation, both $2$-vertices of the $a_3$ are supported since both are neighbors to anchors of $1$- or $3$-thread. This contradicts the assumption that $s\leq 1$. Thus $x_4\neq 1$.
    
    \emph{Case 3: $x_4 = 0$.} Since $\ell(f) = 11$ and the array representation of $f$ contains neither $a_4$ nor $a_1$, by parity arguments, $x_3\in \{1,3\}$. If $x_3 = 1$ then the array representation of $f$ is some permutation of $a_3,a_2,a_2,a_2,a_2$, and regardless of the permutation, both $2$-vertices of the $a_3$ are supported since both are neighbors to anchors of $1$-threads. This contradicts $s\leq 1$ and hence we may assume that $x_3 = 3$. In this case, the array representation of $f$ is some permutation of $a_3,a_3,a_3,a_2$. Since $f$ contains exactly one $1$-thread, both of its anchors are also anchors of distinct $2$-threads. Thus, both of the $2$-threads have at least one supported vertex each, contradicting the assumption that $s\leq 1$. Thus, $x_4\neq 0$. This covers all cases and completes the claim.
\end{claimproof}
By the two claims above, it follows $\mu'(f) \geq 0$ for all $f \in F(G)$.
\end{proof}

\section{Acknowledgements}
This work was initiated at the 2023 Graduate Research Workshop in Combinatorics, which was supported in
part by NSF grant \#1953985 and a generous award from the Combinatorics Foundation.
We are also grateful to the University of Wyoming for their hospitality during this workshop. 
The work of Eun-Kyung Cho was supported by Basic Science Research Program through the National Research Foundation of Korea (NRF) funded by the Ministry of Education (No. RS-2023-00244543).
The work of Emily Heath was partially supported by NSF RTG Grant DMS-1839918.
The work of Hyemin Kwon was supported by the framework of international cooperation program managed by the National Research Foundation of Korea (NRF-2023K2A9A2A06059347).
The work of Zhiyuan Zhang was supported by the Ontario Graduate Scholarship (OGS) Program and partially supported by the NSERC under Discovery Grant (No. 2019-04269). 
We also thank the developers of Sagemath~\cite{sagemath}, which we used for preliminary explorations.
\appendix

\printbibliography
\end{document}